\documentclass[11pt,reqno]{amsart}
\usepackage{graphicx,amsmath,amsfonts,latexsym,amssymb,amsthm,color}
\usepackage{latexsym}
\usepackage{verbatim}
\usepackage{enumerate}
\usepackage{enumitem}

\usepackage[a4paper]{geometry}
\definecolor{blau}{rgb}{0.05,0.2,0.7}
\definecolor{auchblau}{rgb}{0.03,0.3,0.7}
\usepackage[pdftex,linkbordercolor={0.8 0.5 0.2}]{hyperref} 
\hypersetup{
pdfauthor = {Florian Hanisch, Alexander Strohmaier, Alden Waters},
pdftitle = {Relative trace formula},
pdfsubject ={Birman-Krein formula}{trace formula}{Scattering Theory}{Spectral Theory},
colorlinks = true,
linkcolor = blau,
citecolor = auchblau
}

\setlength{\parindent}{0ex}

\newcommand{\Id}{\mathrm{Id}}

\renewcommand{\Re}{\operatorname{Re}}
\renewcommand{\Im}{\operatorname{Im}}

\DeclareMathOperator{\supp}{supp}
\newcommand{\der}{\mathrm{d}}
\newcommand{\rmi}{\mathrm{i}}

\newcommand{\sphere}{{\mathbb{S}^{d-1}}}
\newcommand{\calO}{\mathcal{O}}
\newcommand{\calT}{\mathcal{T}}
\newcommand{\tr}{\mathrm{Tr}}
\newcommand{\Tr}{\mathrm{Tr}}
\newcommand{\tsum}{\textstyle{\sum}}
\newcommand{\rel}{\mathrm{rel}}	
\newcommand{\dist}{\mathrm{dist}}
\newcommand{\trans}{\mathtt{t}}

\newcommand{\Complex}{\mathbb{C}}
\newcommand{\Reell}{\mathbb{R}}
\newcommand{\R}{\mathbb{R}}
\newcommand{\C}{\mathbb{C}}
\newcommand{\N}{\mathbb{N}}

\newcommand{\del}{\partial}

\newcommand{\compp}{\mathrm{comp}}

\newcommand{\loc}{\mathrm{loc}}
\newcommand{\calS}{\mathcal{S}}
\newcommand{\calD}{\mathcal{D}}
\newcommand{\calB}{\mathcal{B}}
\newcommand{\calN}{\mathcal{N}}

\newcommand{\Hai}{\kappa}
\newcommand{\Ha}{\mathrm{H}^{(1)}}
\newcommand{\tQ}{\widetilde{Q}}




\setlist{leftmargin=8mm}


\newtheorem{theorem}{Theorem}[section]
\newtheorem{definition}[theorem]{Definition}

\newtheorem{lemma}[theorem]{Lemma}

\newtheorem{corollary}[theorem]{Corollary}
\newtheorem{proposition}[theorem]{Proposition}

\newtheorem{rem}[theorem]{Remark}

\newcommand{\eps}{\varepsilon}

\def\mathbi#1{\textbf{\em #1}}


\title[A relative trace formula]{A relative trace formula for obstacle scattering}

\author[F. Hanisch]{Florian Hanisch}
\address{Potsdam University, D-14476 Golm, Germany} 
\email{fhanisch@uni-potsdam.de}
\thanks{Supported by Leverhulme grant RPG-2017-329}

\author[A. Strohmaier]{Alexander Strohmaier}
\address{School of Mathematics,  University of Leeds,  Leeds , Yorkshire, LS2 9JT,
UK} \email{a.strohmaier@leeds.ac.uk}

\author[A. Waters]{Alden Waters}
\address{ University of Groningen, Bernoulli Institute,
Nijenborgh 9,
9747 AG Groningen,
The Netherlands} \email{a.m.s.waters@rug.nl}

\begin{document}

\begin{abstract}
 We consider the case of scattering by several obstacles in $\R^d$ for $d \geq 2$. In this setting the absolutely continuous part of the Laplace operator $\Delta$ with Dirichlet boundary conditions and the free Laplace operator $\Delta_0$ are unitarily equivalent. For suitable functions that decay sufficiently fast we have that the difference $g(\Delta)-g(\Delta_0)$
 is a trace-class operator and its trace is described by the Krein spectral shift function. In this paper we study the contribution to the trace (and hence the Krein spectral shift function) that arises from assembling several obstacles relative to a setting where the obstacles are completely separated. In the case of two obstacles, we consider the Laplace operators $\Delta_1$ and $\Delta_2$ obtained by imposing Dirichlet boundary conditions only on one of the objects. Our main result in this case states that then
 $g(\Delta) - g(\Delta_1) - g(\Delta_2) + g(\Delta_0)$ is a trace-class operator for a much larger class of functions (including functions of polynomial growth)
 and that this trace may still be computed by a modification of the Birman-Krein formula. In case $g(x)=x^\frac{1}{2}$ the relative trace has a physical meaning as the vacuum energy of the massless scalar field and is expressible as an integral involving boundary layer operators. Such integrals have been derived in the physics literature using non-rigorous path integral derivations and our formula provides both a rigorous justification as well as a generalisation.
 \end{abstract}

\maketitle


\section{Introduction}

Let $d \geq 2$ and let $\mathcal{O}$ be an open subset in $\R^d$ with compact closure and smooth boundary $\partial \calO$. The (finitely many) connected components will be denoted
by $\mathcal{O}_j$ with some index $j$. We will think of these as obstacles placed in $\R^d$. Removing these obstacles from $\R^d$ results in a non-compact open domain
$M = \R^d \backslash \overline{\mathcal{O}}$ with smooth boundary $\partial \mathcal{O}$, which is the disjoint union of connected components $\partial \calO_j$. We will assume throughout that $M$ is connected.

The positive Laplace operator $\Delta$ on $\R^d$ with Dirichlet boundary conditions at $\partial \calO$ is by definition the self-adjoint operator constructed from the energy quadratic form with Dirichlet boundary conditions
\begin{equation}
 q_D(\phi,\phi) = \langle \nabla \phi, \nabla \phi \rangle_{L^2(\R^d)}, \quad \mathrm{dom}(q_D) = \{ \phi \in H^1(\R^d) \mid \phi |_{\partial \calO} = 0\}.
\end{equation}
The Hilbert space $L^2(\R^d)$ splits into an orthogonal sum $L^2(\R^d) = L^2(\calO) \oplus L^2(M)$ and the Laplace operator leaves each subspace invariant. In fact, the spectrum of the Laplacian on 
$L^2(\calO)$ is discrete, consisting of eigenvalues of finite multiplicity, whereas the spectrum on $L^2(M)$ is purely absolutely continuous. The above decomposition is therefore also the decomposition into absolutely continuous and pure point spectral subspaces.
In this paper we are interested in the fine spectral properties of the Laplace operator on $L^2(M)$ but it will be convenient for notational purposes to consider instead the Laplace operator $\Delta$ on $L^2(\R^d)$ with Dirichlet boundary conditions imposed on $\partial \calO$ as defined above. Let us denote by $\Delta_0$ the Laplace operator on $L^2(\R^d)$ without boundary conditions.

Scattering theory relates the continuous spectrum of the operator $\Delta$ to that of the operator $\Delta_0$. A full spectral decomposition of $\Delta$ analogous to the Fourier transform in $\R^d$ can be achieved for $\Delta$. The discrete spectrum of $\Delta$ consists of eigenvalues of the interior Dirichlet problem on $\calO$ and the continuous spectrum is described by generalised eigenfunctions $E_\lambda(\Phi)$. We now explain the well known spectral decompositions in more detail. A similar description as below is true in the more general black-box formalism in scattering theory as introduced by Sj\"ostrand and Zworski \cite{MR1115789} and follows from the meromorphic continuation of the resolvent and its consequences. The exposition below follows \cite{OS} and we refer the reader to this article for the details of the spectral decomposition.

There exists an orthonormal basis $(\phi_j)$ in $L^2(\calO)$ consisting of eigenfunctions of $\Delta$
and a family of generalised eigenfunctions $E_\lambda(\Phi)$ on $M$ indexed by functions $\Phi \in C^\infty(\sphere)$ such that
\begin{align*}
 \Delta \phi_j &= \lambda_j^2 \phi_j, &  \phi_j |_{\partial \calO} &=0, & \phi_j &\in C^\infty(\overline \calO),\\
 \Delta E_\lambda(\Phi) &= \lambda^2 E_\lambda(\Phi), &  E_\lambda(\Phi) |_{\partial \calO} &=0, &  \quad E_\lambda(\Phi) &\in C^\infty(\overline M), 
\end{align*}
with 
\begin{equation}
 E_\lambda(\Phi) = \frac{e^{-\rmi \lambda r}}{r^\frac{d-1}{2}} \Phi + \frac{e^{\rmi \lambda r}}{r^\frac{d-1}{2}} e^{-\rmi \frac{\pi}{2} (d-1)} \tau \circ \mathbi{S}_{\lambda}(\Phi) + O(r^{-\frac{d+1}{2}})
\end{equation}
as $r \to \infty$ for any $\lambda >0$. Here $\tau: C^\infty(\sphere) \to C^\infty(\sphere), \tau \Phi(\theta) = \Phi(-\theta)$ is the antipodal map and 
the scattering operator $\mathbi{S}_\lambda: C^\infty(\sphere) \to C^\infty(\sphere)$ is implicitly determined be the above asymptotic.
The generalised eigenfunctions $E_\lambda(\Phi)$ together with the eigenfunctions $\phi_j$ provide the full spectral  resolution of the operator $\Delta$.
We define the eigenvalue counting function $N_\calO(\lambda)$ of $\calO$ by
$N_\calO(\lambda) = \# \{ \lambda_j \mid \lambda_j \leq \lambda \}.$

The scattering matrix is a holomorphic function in $\lambda$ on the upper half space and it is of the form 
$\mathbi{S}_\lambda = \mathrm{id} + \mathbi{A}_\lambda$, where $\mathbi{A}_\lambda$ is a holomorphic family of smoothing operators
on $\sphere$. 
It can be shown that $\mathbi{A}_\lambda$ extends to a continuous family of trace-class operators on the real line and one has the following estimate on 
the trace norm 
\begin{equation} \label{Abound}
\| \mathbi{A}_\lambda \|_1 = \left \{  \begin{matrix} O(\lambda^{d-2}) & \textrm{ for } d\geq 3, \\ O(\frac{1}{-\log(\lambda)}) & \textrm{ for } d=2 \end{matrix} \right.
\end{equation}
for all $| \lambda |<\frac{1}{2}$ in a fixed sector in the logarithmic cover of the complex plane, c.f. \cite[Theorem 1.11]{OS} or  \cite[Lemma 2.5]{christiansen1999weyl} in case $d \geq 3$.
In fact $\mathbi{A}_\lambda$ extends to a meromorphic family on the entire complex plane in case $d$ is odd. It extends to a meromorphic family on the logarithmic cover of the complex plane in case $d$ is even, and in this case there may be logarithmic terms in the expansion about the point $\lambda=0$. The behaviour near $\lambda=0$ is conveniently described by the fact that $\mathbi{A}_\lambda$ is Hahn-holomorphic near zero in any fixed sector of the logarithmic cover of the complex plane (see Appendix \ref{hahnapp}).

It follows that the Fredholm determinant $\det\left(\mathbi{S}_\lambda \right)$
is well defined, holomorphic in $\lambda$ in this sector, and for any choice of branch of the logarithm, we have that
\begin{equation}\label{defS}
 \frac{\der}{\der \lambda} \log\det(\mathbi{S}_\lambda) = \tr (\mathbi{S}^{\,-1}_\lambda \frac{\der}{\der \lambda} \mathbi{S}_\lambda),
\end{equation}
for $\lambda$ at which  $\det(\mathbi{S}_\lambda)$ is non-zero. Moreover from \eqref{Abound} one obtains, $\det(\mathbi{S}_\lambda) = 1+O(\lambda)$ for $| \lambda | <1$ and $\Im(\lambda)>0$ if $d \geq 3$. In case $d=2$ we have $\det(\mathbi{S}_\lambda) = 1+O(\frac{1}{-\log\lambda})$ for $| \lambda | <1$ and $\Im(\lambda)>0$. Since $\mathbi{S}_\lambda$ is unitary on the positive real axis this allows one to fix a unique branch for the logarithm on the positive real line.
Equation \eqref{defS} is well known for holomorphic families of matrices but can easily be shown to extend to the Fredholm determinant, for example by using Theorem 3.3 and Theorem 6.5 in \cite{BS}.

The general Birman-Krein formula \cite{MR0139007} relates the spectral functions of two operators under the assumption that the difference of certain powers of the resolvent is trace-class. It has been observed by Kato and Jensen \cite{MR512084} that this formalism applies to obstacle scattering. In this context the Birman-Krein formula states that if $f$ is an even Schwartz function then
\begin{equation}
 \tr\left( f(\Delta^\frac{1}{2}) - f(\Delta_0^\frac{1}{2}) \right) = -\int_0^\infty f'(\lambda) \xi_\calO(\lambda) d \lambda,
\end{equation}
where the Krein spectral shift function $\xi_\calO(\lambda)$ is given by
\begin{equation}
 \xi_\calO(\lambda) = \frac{1}{2 \pi \rmi }\log \det \left( \mathbi{S}_\lambda \right) + N_\calO(\lambda).
\end{equation}
This formula is well known. It is stated for even and compactly supported functions  in  \cite{MR512084} (see also the textbook \cite[Ch. 8]{Taylorbook2}  for in case $d=3$) but has been generalised to the case of conical manifolds without boundary in \cite{christiansen1999weyl}. 
A direct proof of the stated formula can be inferred from the Birman-Krein formula as stated and proved in \cite{OS}.

There has been significant interest in the asymptotics of the scattering phase and the corresponding Weyl law
 and its error term starting with the work by Majda and Ralston \cite{Majda_1978} and subsequent papers proving Weyl laws in increasing generality.
 We mention here the paper by Melrose \cite{MR956828} establishing the Weyl law for the spectral shift function for smooth obstacles in $\R^d$
 in odd dimensions, and Parnovski who establishes the result for manifolds that are conic at infinity 
\cite{parnovski2000scattering} and possibly have a compact boundary.

The Krein-spectral shift function is related to the $\zeta$-regularised determinant of the Dirichlet to Neumann operator 
$\mathcal{N}_\lambda$ which is the sum of the interior and the exterior Dirichlet to Neumann operator of the obstacle $\calO$. 
We have
\begin{equation}
 \xi_\calO(\lambda) = \lim_{\eps \to 0_+} \arg \det\nolimits_\zeta \mathcal{N}_{\lambda + \rmi \eps}. 
\end{equation}
This was proved in a quite general context by Carron in \cite{carron1999determinant}. 
A similar formula involving the double layer operator instead of the single layer operator
is proved for planar exterior domains by Zelditch in \cite{MR2169909}, inspired by the work of Balian and Bloch \cite{MR270008} in dimension 3, and also of Eckmann and Pillet \cite{MR1441596} in the case of planar domains in dimension 2. It is worth noting that a representation of the scattering matrix that allows  to reduce the computation of the spectral shift function to the boundary appears implicitly in their 
proof of inside-outside duality for planar domains \cite{MR1334397}.
In a more general framework of boundary triples, formulae that somewhat resemble this one were proved more recently in  \cite{gesztezy}, although this paper does not use the $\zeta$-regularised determinant.

\subsection{Setting}

In the present paper we investigate the contribution to the spectral shift from assembling the objects $\calO$ from individual objects $\calO_j$.
If $\partial \calO_j$ are the $N$ connected components of the boundary we define the following self-adjoint operators on $L^2(\R^d)$;
\smallskip
\begin{align*}
 \Delta \mspace{7mu}   &=  \text{ the Laplace operator with Dirichlet boundary conditions on } \partial \calO \qquad \qquad\\
          &\mspace{24mu}  \text{ as defined before.} \\
 \Delta_j &=  \text{ the Laplace operator with Dirichlet boundary conditions on } \partial \calO_j \\ 
          &\mspace{29mu} (1 \leq j \leq N). \\
 \Delta_0 &=  \text{ the "free" Laplace operator on } \R^d \text{ with domain } H^2(\R^d).\\
\end{align*}
We are now interested in the following relative trace
\begin{align*}
 \tr\left( f(\Delta^{\frac{1}{2}}) -  f(\Delta_0^{\frac{1}{2}}) - \sum_{j=1}^N \left(  f(\Delta_j^{\frac{1}{2}}) - f(\Delta_0^{\frac{1}{2}}) \right) \right)\\ =  \tr\left( f(\Delta^{\frac{1}{2}}) - \sum_{j=1}^N f(\Delta_j^{\frac{1}{2}}) + (N-1)f(\Delta_0^{\frac{1}{2}})\right),
\end{align*}
which is the trace of the operator
\begin{align*}
 D_f = f(\Delta^{\frac{1}{2}}) -  f(\Delta_0^{\frac{1}{2}}) - \sum_{j=1}^N \left(  f(\Delta_j^{\frac{1}{2}}) - f(\Delta_0^{\frac{1}{2}}) \right).
\end{align*}
If $f$ is an even Schwartz function the Birman-Krein formula applies and we simply have 
$$
 \tr \left( D_f \right) = -\int_{0}^\infty \left( \xi_\calO(\lambda) - \sum_{j=1}^N \xi_{\calO_j}(\lambda) \right)   f'(\lambda)  d \lambda.
$$
We therefore define the relative spectral shift function
$$
  \xi_{\rel}(\lambda) = \left( \xi_\calO(\lambda) - \sum_{j=1}^N \xi_{\calO_j}(\lambda) \right).
$$
The contributions of $N_\calO$ and $N_{\calO_j}$ in the relative spectral shift function cancel and 
\begin{align}\label{xireldef}
  \xi_{\rel}(\lambda) = \frac{1}{2 \pi \rmi} \log \left( \frac{\det \mathbi{S}_\lambda}{\det(\mathbi{S}_{1,\lambda}) \cdots \det(\mathbi{S}_{N,\lambda})} \right),
\end{align}
where $\mathbi{S}_{j,\lambda}$ are the scattering matrices of $\Delta_j$, i.e. associated to the objects $\calO_j$. This shows that
$\xi_\rel$ is a holomorphic function near the positive real axis and that $\xi_\rel'$ has a meromorphic continuation to the logarithmic cover of the complex plane. In particular the restriction of $\xi_{\rel}(\lambda)$ to $\R$ is continuous. 

Our main results are concerned with the properties of the operators $f(\Delta^\frac{1}{2})-f(\Delta_0^\frac{1}{2})$ and $D_f$ for a class of functions $f$ that is much larger than the class usually admissible in the Birman-Krein formula. In order to state the main theorems let us first introduce this class of functions.\\

Assume $0<\epsilon\leq \pi$ and let $\mathfrak{S}_\epsilon$ be the open sector 
\begin{align*}
\mathfrak{S}_\epsilon=\{ z \in \C \mid z\not=0, | \arg(z) | < \epsilon \}. 
\end{align*}
We define the following spaces of functions. The space $\mathcal{E}_\epsilon$ will be defined by
\begin{gather*}
 \mathcal{E}_\epsilon= \{ f : \mathfrak{S}_\epsilon \to \C \mid f \textrm{ is holomorphic in } \mathfrak{S}_\epsilon, \exists \alpha>0, \forall \epsilon_0>0, \, |f(z)| = O( |z|^\alpha e^{\epsilon_0 |z|}) \}.
\end{gather*}
Here the bound implied by the ``big O'' notation is on the entire sector.
In particular, functions in $\mathcal{E}_\epsilon$ are of order $O(|z|^a)$ as $|z| \to 0$ for some $a>0$ and bounded by an exponential as  $|z| \to \infty$.

\begin{definition}
  We define the space $\mathcal{P}_\epsilon$ as the set of functions in  $\mathcal{E}_\epsilon$ whose restriction to $[0,\infty)$ is polynomially bounded and that extend continuously to the boundary 
  of $\mathfrak{S}_\epsilon$ in the logarithmic cover of the complex plane.
\end{definition}

\begin{rem}
 Reference to the logarithmic cover of the complex plane is only needed in case $\epsilon=\pi$. In this case 
 functions in $\mathcal{P}_\pi$ are required to have continuous limits from above and below on the negative real axis. We do not however require that these limits coincide.
 \end{rem}
The space $\mathcal{P}_\epsilon$ contains in particular $f(z) = z^{a}, \,\, a>0$ for any $0<\epsilon \leq \pi$.

When working with the Laplace operator it is often convenient to change variables and use $\lambda^2$ as a spectral parameter. For notational brevity we therefore introduce another class of functions as follows.
\begin{definition} \label{def:TildePEps}
 The space $\widetilde{\mathcal{P}}_\epsilon$ is defined to be the space of functions $f$ such that $f(\lambda)=g(\lambda^2)$ for some $g \in \mathcal{P}_\epsilon$.
\end{definition}

For $0<\epsilon \leq \pi$ we also define the contours $\Gamma_\epsilon$ in the complex plane as the boundary curves of the sectors 
$\mathfrak{S}_\epsilon$. In case $\epsilon=\pi$ the contour is defined as a contour in the logarithmic cover of the complex plane. We also let $\widetilde{\Gamma}_\epsilon$ be the corresponding contour after the change of variables $z \mapsto z^2$, i.e. the pre-image in the upper half space under this map of $\Gamma_\epsilon$. $\widetilde{\Gamma}_\epsilon$ is the boundary curve of $\mathfrak{D}_{\epsilon/2}$, where the sector $\mathfrak{D}_{\epsilon}$ is defined by
\begin{align*}
\mathfrak{D}_{\epsilon} &:= \{z \in \C \mid \epsilon < \arg(z) < \pi - \epsilon\}.
\end{align*}
These sectors and contours are illustrated below. 

\begin{figure}[h]
 \centering
 \includegraphics[scale=1.2,keepaspectratio=true]{./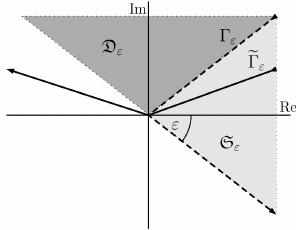}
 \caption{Sectors and contours of integration in the complex plane}
 \label{fig:sectors}
\end{figure}

\subsection{Main results}

Our first result is about the behaviour of the integral kernel of $f(\Delta^\frac{1}{2})-f(\Delta_0^\frac{1}{2})$ away from the object $\calO$. 
\begin{theorem}\label{maindiff}
 Suppose that $\Omega \subset M$ is an open subset of $M$ such that $\mathrm{dist}(\Omega,\calO)>0$. Suppose
 $f \in \widetilde{\mathcal{P}}_\epsilon$ for some $0< \epsilon \leq \pi$. Assume that $a>0$ is chosen such that near $\lambda=0$ we have $| f(\lambda) | = O(|\lambda|^a)$.
 Let $p_\Omega$ be the multiplication operator with the indicator function of $\Omega$. Then, the operator
 $p_\Omega \left( f(\Delta^\frac{1}{2})-f(\Delta_0^\frac{1}{2}) \right) p_\Omega$ extends to a trace-class operator $T_f: L^2(\Omega) \to L^2(\Omega)$ with smooth integral kernel
 $k_f \in C^\infty(\overline \Omega \times \overline \Omega)$. Moreover,
 \begin{align*}
  \Tr\; (T_f) = \int_{\Omega} k_f(x,x) dx.
\end{align*}
For large $\dist(x,\del\calO)$ we have
\begin{align}
|k_f(x,x)|\leq \frac{C_{\Omega}}{(\dist(x,\del\calO))^{2d-2+a}}
\end{align}
where the constant $C_{\Omega}$ depends on $\Omega$ and $f$. 
\end{theorem}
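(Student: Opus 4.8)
The plan is to reduce everything to estimates on the \emph{localised resolvent difference}. By Definition~\ref{def:TildePEps} we have $f(\lambda)=g(\lambda^2)$ with $g\in\mathcal{P}_\epsilon$, so $f(\Delta^{\frac12})-f(\Delta_0^{\frac12})=g(\Delta)-g(\Delta_0)$, and the hypothesis on $a$ becomes $|g(\zeta)|=O(|\zeta|^{a/2})$ as $\zeta\to 0$. Write $R(\zeta)=(\Delta-\zeta)^{-1}$, $R_0(\zeta)=(\Delta_0-\zeta)^{-1}$ on $L^2(\R^d)$. Choosing $0<\epsilon'<\epsilon$ and contracting the boundary of the sector (legitimate since $g$ is holomorphic in a neighbourhood of $[0,\infty)$), I would establish the operator identity
\begin{align*}
 p_\Omega\bigl(g(\Delta)-g(\Delta_0)\bigr)p_\Omega \;=\; \frac{1}{2\pi\rmi}\int_{\Gamma_{\epsilon'}} g(\zeta)\, p_\Omega\bigl(R(\zeta)-R_0(\zeta)\bigr)p_\Omega\,\der\zeta ,
\end{align*}
with $\Gamma_{\epsilon'}$ suitably oriented around $[0,\infty)$. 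This first holds for bounded holomorphic $g$ (ordinary holomorphic functional calculus) and is extended to polynomially bounded $g$ by a regularisation $g\rightsquigarrow g\cdot(1+\zeta/k)^{-m}$, $k\to\infty$, once the right-hand side is shown to converge absolutely in trace norm. The latter follows from the single-layer factorisation in the next paragraph: it exhibits $p_\Omega(R(\zeta)-R_0(\zeta))p_\Omega$ as a product of Hilbert--Schmidt operators whose $\|\cdot\|_2$-norms are controlled (finite for each $\zeta\in\Gamma_{\epsilon'}\setminus\{0\}$ thanks to the exponential decay of the free kernel, with at worst an integrable blow-up as $\zeta\to0$ beaten by $|g(\zeta)|=O(|\zeta|^{a/2})$, and exponentially small as $|\zeta|\to\infty$). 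In particular $T_f$ is trace class even when $\Omega$ is unbounded, and $\Tr T_f=\frac{1}{2\pi\rmi}\int_{\Gamma_{\epsilon'}} g(\zeta)\,\Tr\bigl(p_\Omega(R(\zeta)-R_0(\zeta))p_\Omega\bigr)\der\zeta$ by interchanging trace and integral.

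Smoothness of the kernel $k_f$ comes for free: the kernel $r_\zeta(x,y)$ of $p_\Omega(R(\zeta)-R_0(\zeta))p_\Omega$ satisfies $(\Delta_x-\zeta)r_\zeta(\cdot,y)=0$ on $M$ (the diagonal singularities of $R$ and $R_0$ cancel), and likewise in $y$, so $r_\zeta\in C^\infty(\overline\Omega\times\overline\Omega)$ with interior elliptic bounds. Differentiating the contour integral and using the same decay estimates on $\partial^\alpha_x\partial^\beta_y r_\zeta$ then gives $k_f\in C^\infty(\overline\Omega\times\overline\Omega)$, and the identity $\Tr T_f=\int_\Omega k_f(x,x)\,\der x$ is the standard fact that a trace-class operator with continuous kernel has trace equal to the integral of the diagonal.

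The quantitative estimate is the heart of the matter. Since $R(\zeta)(\cdot,y)$ vanishes on $\del\calO$, Green's representation (the single-layer ansatz) gives, for $x,y\in\Omega$,
\begin{align*}
 r_\zeta(x,y) \;=\; -\int_{\del\calO} R_0(\zeta)(x,w)\,\psi_{\zeta,y}(w)\,\der\sigma(w),\qquad \psi_{\zeta,y}=S_\zeta^{-1}\bigl(R_0(\zeta)(\cdot,y)|_{\del\calO}\bigr),
\end{align*}
where $S_\zeta$ is the boundary single-layer operator, invertible on $\Gamma_{\epsilon'}\setminus\{0\}$ because the interior Dirichlet eigenvalues lie on the positive real axis, with $\|S_\zeta^{-1}\|$ bounded near $0$ for $d\ge3$ and logarithmically controlled for $d=2$ (here one invokes \eqref{Abound}). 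Hence $|r_\zeta(x,x)|\le \|S_\zeta^{-1}\|\,\|R_0(\zeta)(x,\cdot)\|_{L^2(\del\calO)}^2$. On $\Gamma_{\epsilon'}$ the branch of $\sqrt\zeta$ with positive imaginary part obeys $\Im\sqrt\zeta\ge c_{\epsilon'}|\zeta|^{1/2}$, so for $w\in\del\calO$ and $R:=\dist(x,\del\calO)$ large,
\begin{align*}
 |R_0(\zeta)(x,w)| \;\le\; C\min\bigl\{\,R^{\,2-d},\; |\zeta|^{\frac{d-3}{4}}R^{-\frac{d-1}{2}}e^{-c_{\epsilon'}|\zeta|^{1/2}R}\,\bigr\}
\end{align*}
(with an extra logarithm for $d=2$). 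Inserting this into $k_f(x,x)=\frac{1}{2\pi\rmi}\int_{\Gamma_{\epsilon'}}g(\zeta)r_\zeta(x,x)\,\der\zeta$ and splitting $\Gamma_{\epsilon'}$ into $|\zeta|\le R^{-2}$, $R^{-2}\le|\zeta|\le1$ and $|\zeta|\ge1$: on the first piece $|g(\zeta)r_\zeta(x,x)|\lesssim |\zeta|^{a/2}R^{2(2-d)}$ and $\int_{|\zeta|\le R^{-2}}|\zeta|^{a/2}|\der\zeta|\sim R^{-(a+2)}$, producing $R^{-(2d-2+a)}$; on the middle piece the substitution $s=|\zeta|^{1/2}$, $u=sR$ together with the exponential factor gives a bound of the same order; on $|\zeta|\ge1$ the exponential decay produces something exponentially small in $R$. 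Summing, $|k_f(x,x)|\le C_\Omega\,\dist(x,\del\calO)^{-(2d-2+a)}$ for large $\dist(x,\del\calO)$. (This decay also re-confirms $k_f(x,x)\in L^1(\Omega)$, since $2d-2+a>d$ for $d\ge2$, $a>0$.)

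The main obstacle, I expect, is the uniform-in-$\zeta$ low-frequency control of $S_\zeta^{-1}$ and of the free resolvent kernel near $\zeta=0$, especially in dimension $d=2$ where both carry logarithmic singularities; the delicate point is that the cut-off at scale $|\zeta|\sim\dist(x,\del\calO)^{-2}$ and the bound \eqref{Abound} must be arranged so that no logarithms degrade the final power $2d-2+a$. A secondary, more routine technical point is the identification of the contour integral with $p_\Omega(g(\Delta)-g(\Delta_0))p_\Omega$ for polynomially growing $g$, which requires the regularisation and the trace-norm convergence above precisely because the unlocalised resolvent difference need not itself be trace class.
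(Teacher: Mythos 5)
Your proposal follows essentially the same route as the paper: represent $p_\Omega\bigl(g(\Delta)-g(\Delta_0)\bigr)p_\Omega$ as a regularised contour integral of the localised resolvent difference, factor that difference through the boundary single-layer operator (your $S_\zeta$ is the paper's $Q_\lambda$ with $\zeta=\lambda^2$) to get the trace-class property via Hilbert--Schmidt bounds, and obtain the diagonal decay by combining off-boundary free-kernel estimates with control of $S_\zeta^{-1}$ and integrating over the contour --- this is precisely Theorem \ref{Thm:DifferenceProperties} combined with the paper's proof of Theorem \ref{maindiff}. The only slip is cosmetic: the low-frequency boundedness of $Q_\lambda^{-1}$ (your $S_\zeta^{-1}$), including in $d=2$, is supplied by Lemma \ref{Lemma23} and Corollary \ref{supercor} (Hahn-holomorphic Fredholm theory), not by \eqref{Abound}, and the $d=2$ logarithms indeed do not degrade the final power, exactly as you anticipate.
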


It is not hard to see that in general the operator $f(\Delta^\frac{1}{2})-f(\Delta_0^\frac{1}{2})$ is not trace-class in the case $\calO \not= \emptyset$ and the above trace is dependent on the cut-off $p_\Omega$. The main result of this paper however is that $D_f$ is trace-class and a modification of the Birman-Krein formula applies to a rather large class of functions. We prove that $D_f$ is densely defined and bounded and therefore extends uniquely to the entire space by continuity. We will not distinguish this unique extension notationally from $D_f$.

\begin{theorem} \label{smoothness}
Suppose that $f \in \widetilde{\mathcal{P}}_\epsilon$ for some $0< \epsilon \leq \pi$. Then the operator $D_f$ is trace-class in $L^2(\R^d)$ and has integral kernel 
\begin{align*}
\kappa_f \ \in \ C^\infty(\overline \calO\times \overline\calO) \oplus C^\infty(\overline{M}\times\overline{M}) \oplus C^\infty(\overline\calO\times\overline{M}) \oplus C^\infty(\overline{M}\times\overline\calO). 
\end{align*}
Moreover, the trace is given by
\begin{align*}
  \Tr (D_f) = \int_{\R^d} \kappa_f(x,x) dx.
\end{align*}
\end{theorem}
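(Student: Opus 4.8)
The plan is to reduce the statement about $D_f$ to a contour integral representation using the resolvents of the operators $\Delta, \Delta_1, \dots, \Delta_N, \Delta_0$, and then to exploit the geometric separation of the obstacles to show the relevant combination of resolvents is trace-class with a smooth kernel. First I would use the Helffer--Sj\"ostrand or, more convenient here, a Cauchy-type contour integral formula: for $f \in \widetilde{\mathcal{P}}_\epsilon$, writing $f(\lambda) = g(\lambda^2)$ with $g \in \mathcal{P}_\epsilon$, one represents $g(\Delta)$ via an integral of $(\Delta - z)^{-1}$ against $g$ over the contour $\widetilde{\Gamma}_{\epsilon'}$ (or $\Gamma_{\epsilon'}$ in the $\lambda$-picture) for a slightly smaller sector. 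The growth/decay bounds built into the definition of $\mathcal{P}_\epsilon$ (polynomial on $[0,\infty)$, vanishing like $|z|^a$ at the origin, exponential type zero at infinity) are exactly what is needed to make such a contour integral converge once one has good resolvent bounds, and the $O(|z|^a)$ behavior at $0$ handles the delicate low-energy regime.

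The key algebraic step is the resolvent identity for the assembled configuration. The heart of the matter is that
\[
(\Delta - z)^{-1} - \sum_{j=1}^N (\Delta_j - z)^{-1} + (N-1)(\Delta_0 - z)^{-1}
\]
is, for $z$ off the spectrum, a smoothing operator that decays rapidly. I would establish this by iterating the single-obstacle resolvent identities $(\Delta - z)^{-1} - (\Delta_0 - z)^{-1} = -G_j(z)$ type formulas, where each correction term is expressed through boundary layer (single-layer) potentials on $\partial\calO_j$ composed with the free resolvent kernel. When one subtracts the sum over $j$ and adds back $(N-1)(\Delta_0-z)^{-1}$, the "diagonal" single-obstacle corrections cancel and what survives is a sum of terms each of which involves the free Green's function propagating \emph{between} distinct boundary components $\partial\calO_i$ and $\partial\calO_j$ with $i\neq j$. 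Since $\mathrm{dist}(\partial\calO_i,\partial\calO_j)>0$, the kernels of these cross terms are smooth and, crucially, the free resolvent kernel in $\R^d$ decays exponentially in $|z|^{1/2}\,\mathrm{dist}$ for $z$ in a sector off $[0,\infty)$; this gives the smoothing and the trace-class property, and (via Theorem~\ref{maindiff}, which controls $f(\Delta^{1/2}) - f(\Delta_0^{1/2})$ away from $\calO$) it also localizes the contributions to bounded neighborhoods of $\partial\calO$. Combining the contour integral with the trace-norm bounds on these cross terms, integrated against $f$ using the decay from the definition of $\widetilde{\mathcal{P}}_\epsilon$, yields that $D_f$ is trace-class.

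For the kernel regularity statement, I would argue that on each of the four pieces $\overline\calO\times\overline\calO$, $\overline M\times\overline M$, $\overline\calO\times\overline M$, $\overline M\times\overline\calO$ the kernel $k_f$ is obtained by applying elliptic (interior and boundary) regularity to the layer-potential representation of the cross terms: single-layer potentials are smooth away from the boundary on which they are supported, and here they are evaluated on different boundary components or in the interior/exterior, hence the kernel is smooth up to $\partial\calO$ from each side. (There is no claim of smoothness across $\partial\calO$, which is why the kernel is only piecewise smooth — this matches the direct-sum structure in the statement.) Finally, once $D_f$ is trace-class with continuous kernel on the diagonal, the identity $\Tr(D_f) = \int_{\R^d} k_f(x,x)\,dx$ follows from the standard fact that for trace-class operators with continuous kernel the trace is the integral of the kernel on the diagonal, applied separately on $\calO$ and on $M$ and summed.

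The main obstacle I anticipate is the low-energy analysis near $z=0$ (equivalently $\lambda=0$), especially in even dimension $d$ where the free resolvent and the layer operators have logarithmic singularities and the contour passes through (or around) the branch point. Controlling the trace norm of the cross terms uniformly as $z\to 0$ along $\widetilde{\Gamma}_\epsilon$ — and checking that the contour integral against $f$ still converges given only $|f(\lambda)| = O(|\lambda|^a)$ — is where the Hahn-holomorphic machinery referenced in the introduction and the precise bound \eqref{Abound} will have to be invoked; the large-$|z|$ regime, by contrast, is comparatively soft because the exponential spatial decay of the free kernel beats the exponential-type-zero growth of $f$.
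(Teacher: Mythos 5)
Your trace-class argument follows the paper's route: represent $D_f$ by a Riesz--Dunford contour integral (with regularising factors $e^{-z/n}$), use the identity $R_{\rel,\lambda}=-\calS_\lambda\left(Q_\lambda^{-1}-\tQ_\lambda^{-1}\right)\calS_\lambda^\trans=\calS_\lambda Q_\lambda^{-1}\calT_\lambda\tQ_\lambda^{-1}\calS_\lambda^\trans$, and extract exponential decay along the contour from the off-diagonal operator $\calT_\lambda$ (Proposition \ref{Prop:CalT}) together with the polynomial bound of Corollary \ref{supercor}; this matches Theorem \ref{trace}. One small correction: the low-energy control does not come from \eqref{Abound}, which is a bound on the scattering matrix and plays no role here; it comes from the Hahn-holomorphy of $Q_\lambda^{-1}$, $\tQ_\lambda^{-1}$ near $\lambda=0$ (Lemma \ref{Lemma23}, Proposition \ref{prop:esti}) and, in the kernel estimates, from the Hahn-meromorphic expansions of cut-off resolvents.

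The genuine gap is in the smoothness of $k_f$ up to $\partial\calO$, which is the actual content of Theorem \ref{smoothness} and the bulk of the paper's proof. First, your stated reason is incorrect: in $\calS_\lambda\left(Q_\lambda^{-1}-\tQ_\lambda^{-1}\right)\calS_\lambda^\trans$ the outer single-layer factors integrate over \emph{all} of $\partial\calO$, and $x$ may approach the very component carrying the integration variable; only the middle factor $\calT_\lambda$ couples distinct components, so ``evaluated on different boundary components'' does not explain boundary regularity. For fixed $\lambda$ one can still get smoothness up to the boundary from the fact that single-layer potentials of smooth densities lie in $C^\infty(\overline\calO)\oplus C^\infty(\overline M)$, but that is not sufficient: since the contour is unbounded and passes near $\lambda=0$, you must differentiate under the integral, which requires $C^k$-bounds on compact subsets of $\overline M\times\overline M$ (and the other closed pieces) that decay in $\lambda$, of the type \eqref{expdecay}; trace-norm bounds give no control on $C^k$-norms of the kernel. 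This is exactly what the paper supplies: Lemma \ref{gjhsdkajb} proves such bounds via the cut-off/commutator factorisation \eqref{factoris}, polynomial bounds on cut-off resolvents $\mathring{H}^{-s}\to H^{-s}$ obtained from elliptic boundary regularity and induction, and Hahn-meromorphic expansions near zero; and near $\partial\calO_j\times\partial\calO_j$ the decisive structural input is that $\Delta$ and $\Delta_j$ impose the \emph{same} boundary condition on $\partial\calO_j$, which legitimises the identity \eqref{distdecay} and separates supports so that exponential decay appears. To complete your layer-potential route you would need analogous parameter-dependent estimates, e.g. polynomial-in-$\lambda$ bounds for the Poisson-type operator $\calS_\lambda Q_\lambda^{-1}$ into high Sobolev spaces up to the boundary and for $Q_\lambda^{-1}$, $\tQ_\lambda^{-1}$ on a whole scale of Sobolev indices (Corollary \ref{supercor} gives only $H^{1/2}\to H^{-1/2}$); none of this is addressed, so the claimed smoothness of $k_f$ up to $\partial\calO$, and hence the diagonal integral formula near the boundary, is unsupported as written.
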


\begin{theorem}\label{theoremxirel}
 Let $\delta = \min_{j \not=k} \mathrm{dist}(\calO_j,\calO_k)$ be the minimal distance between distinct objects
 and let $0 < \delta' <\delta$. 
 Then there exists a unique function $\Xi$, holomorphic in the upper half space, such that
 \begin{enumerate}
 \item $\Xi'$ has a meromorphic extension to the logarithmic cover of the complex plane, and to the complex plane in case $d$ is odd.
 \item for any $\epsilon>0$ there exists $C_{\delta',\epsilon}>0$ with
 \begin{gather*}
  | \Xi'(\lambda) | \leq C_{\delta',\epsilon}e^{-\delta' \Im(\lambda)}, \quad \textrm{if } \Im(\lambda) \geq \epsilon |\lambda|,\\
  | \Xi(\lambda) | \leq C_{\delta',\epsilon}e^{-\delta' \Im(\lambda)}, \quad \textrm{if } \Im(\lambda) \geq \epsilon |\lambda|. 
 \end{gather*}
 \item  for $\lambda>0$ we have 
  $$
  \frac{1}{\pi} \Im \Xi(\lambda) = - \frac{\rmi}{2\pi} \left( \Xi(\lambda) - \Xi(-\lambda)\right) = -\xi_{rel}(\lambda).
  $$
 \item if  $f \in \widetilde{\mathcal{P}}_\epsilon$ for some $0< \epsilon \leq \pi$, then
 \begin{align*}
  \Tr \left( D_f \right)= \frac{\rmi}{2\pi}\int_{\widetilde{\Gamma}_{\epsilon}} \Xi(\lambda) f'(\lambda) d \lambda.
 \end{align*}
 \end{enumerate}
\end{theorem}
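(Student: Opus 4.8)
\emph{Strategy.} I would realise $\Xi$ as (the extension to the upper half plane of) the logarithm of a \emph{relative perturbation determinant} built from the boundary layer operators, read off (1)--(3) from that realisation, and deduce (4) from a Cauchy contour representation of $D_f$ followed by an integration by parts. For $\Im\lambda>0$ consider the relative combination of resolvents
\[
 R(\lambda^2)=(\Delta-\lambda^2)^{-1}-\sum_{j=1}^{N}(\Delta_j-\lambda^2)^{-1}+(N-1)(\Delta_0-\lambda^2)^{-1}.
\]
Represent each of $(\Delta-\lambda^2)^{-1}-(\Delta_0-\lambda^2)^{-1}$ and $(\Delta_j-\lambda^2)^{-1}-(\Delta_0-\lambda^2)^{-1}$ via the single layer potential of the free resolvent, acting on $\partial\calO$ resp.\ on $\partial\calO_j$. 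In block form on $\partial\calO=\bigsqcup_j\partial\calO_j$ the associated boundary operator has diagonal blocks equal to the operators governing the individual $\Delta_j$, while its off-diagonal blocks have kernels given by the free Green's function $G_\lambda(x,y)=O\big(|\lambda|^{(d-3)/2}|x-y|^{-(d-1)/2}e^{-\Im\lambda\,|x-y|}\big)$ (for $|x-y|$ bounded below) evaluated for $x,y$ on \emph{distinct} obstacles, hence with $|x-y|\geq\delta$; together with the elementary bound $|\lambda|^{N}e^{-\delta\Im\lambda}\leq C_{\delta',\epsilon}e^{-\delta'\Im\lambda}$ on $\{\Im\lambda\geq\epsilon|\lambda|\}$ this gives off-diagonal trace norm $O(e^{-\delta'\Im\lambda})$ on such sectors, with meromorphic continuation (to $\C$ for $d$ odd, to the logarithmic cover and Hahn-holomorphic at $\lambda=0$ for $d$ even) inherited from that of the layer operators, equivalently of $\mathbi A_\lambda$. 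A Schur-complement / block-determinant manipulation then collects all joint effects into one trace-class operator $K_\lambda$, supported on the off-diagonal blocks, with $\det(I+K_\lambda)$ equal to the ratio of the full boundary determinant to the product of the single-obstacle ones, $\|K_\lambda\|_1=O(e^{-\delta'\Im\lambda})$ and $\tfrac{\der}{\der\lambda}\log\det(I+K_\lambda)=\Tr\big((I+K_\lambda)^{-1}K_\lambda'\big)$ again $O(e^{-\delta'\Im\lambda})$. I set $\Xi(\lambda):=-\log\det(I+K_\lambda)$, holomorphic in the upper half plane, the global sign and normalisation fixed by consistency with (3). (Equivalently, since $R(\lambda^2)$ is trace class with $\|R(\lambda^2)\|_1=O(|\lambda|^{-1}e^{-\delta'\Im\lambda})$ on the sectors — using $R=\big[(\Delta-z)^{-1}-(\Delta_1-z)^{-1}\big]-\big[(\Delta_2-z)^{-1}-(\Delta_0-z)^{-1}\big]$ for $N=2$ and induction in general — one may instead define $\Xi$ to be the antiderivative of $\lambda\mapsto\pm2\lambda\,\Tr R(\lambda^2)$ vanishing at $\rmi\infty$; the two definitions agree by the resolvent form of the Birman--Krein identity.)

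\emph{Properties (1)--(3) and uniqueness.} Item (1) is the continuation just recorded; item (2) follows from $|\log\det(I+K_\lambda)|\leq(1-\|K_\lambda\|)^{-1}\|K_\lambda\|_1$ and the analogous bound for $\Xi'=-\Tr\big((I+K_\lambda)^{-1}K_\lambda'\big)$. For (3) I use the relative version of the classical identification of $\det\mathbi S_\lambda$ with a boundary determinant (cf.\ Carron \cite{carron1999determinant}, Zelditch \cite{MR2169909}): on $(0,\infty)$, $\overline{G_\lambda}=G_{-\bar\lambda}$ yields $K_{-\bar\lambda}=K_\lambda^*$, so $\det(I+K_{-\lambda})=\overline{\det(I+K_\lambda)}$ and
\[
 \frac{\det\mathbi S_\lambda}{\det\mathbi S_{1,\lambda}\cdots\det\mathbi S_{N,\lambda}}=\frac{\det(I+K_\lambda)}{\det(I+K_{-\lambda})}=e^{2\rmi\,\Im\log\det(I+K_\lambda)}=e^{-2\rmi\,\Im\Xi(\lambda)},
\]
which by \eqref{xireldef} equals $e^{2\pi\rmi\,\xi_{\rel}(\lambda)}$; comparing phases gives $\Im\Xi(\lambda)=-\pi\,\xi_{\rel}(\lambda)$, i.e.\ the first equality in (3). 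The same symmetry, through the Hahn-holomorphic continuation across $0$, gives $\Xi(-\lambda)=\overline{\Xi(\lambda)}$, whence $-\tfrac{\rmi}{2\pi}(\Xi(\lambda)-\Xi(-\lambda))=\tfrac{1}{\pi}\Im\Xi(\lambda)$, the remaining equality. For uniqueness, if $\Xi_1,\Xi_2$ both satisfy (1)--(3), then $h:=\Xi_1-\Xi_2$ is holomorphic in the upper half plane, has $\Im h=0$ on $(0,\infty)$, decays like $e^{-\delta'\Im\lambda}$ on sectors $\Im\lambda\geq\epsilon|\lambda|$, and has a meromorphic derivative; reflecting across $(0,\infty)$ and applying Phragmén--Lindelöf (the sectorial decay handling the imaginary directions, the polynomial bound on $\xi_{\rel}$ controlling $\Im\Xi$ near $\R$, the meromorphic extension excluding poles) forces $h\equiv0$.

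\emph{The trace formula (4).} Write $f(\lambda)=g(\lambda^2)$ with $g\in\mathcal P_\epsilon$ and fix $\epsilon'<\epsilon$. Since $R(z)$ is trace class with trace norm decaying exponentially in $\sqrt{|z|}$ along $\Gamma_{\epsilon'}$, while $|g(z)|$ grows only polynomially and is $O(|z|^a)$ near $0$, the integral $\frac{1}{2\pi\rmi}\int_{\Gamma_{\epsilon'}}g(z)\,R(z)\,\der z$ converges absolutely in trace norm and represents $D_f$ — the individually divergent contour integrals of $g(\Delta),g(\Delta_j),g(\Delta_0)$ being compensated by the exact cancellation of their polynomial parts, a point one can make rigorous via an almost-analytic (Helffer--Sjöstrand) calculus or a truncation argument. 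Taking the trace, substituting $z=\lambda^2$ (so $\Gamma_{\epsilon'}\mapsto\widetilde\Gamma_{\epsilon'}$, $\der z=2\lambda\,\der\lambda$, $g(z)=f(\lambda)$) and using $\pm2\lambda\,\Tr R(\lambda^2)=\Xi'(\lambda)$ (same sign convention as in the construction of $\Xi$), one gets $\Tr D_f=\frac{1}{2\pi\rmi}\int_{\widetilde\Gamma_{\epsilon'}}f(\lambda)\Xi'(\lambda)\,\der\lambda$. Integrating by parts — the endpoint terms vanish since $\widetilde\Gamma_{\epsilon'}\subset\{\Im\lambda\geq c_{\epsilon'}|\lambda|\}$, so $\Xi$ decays exponentially against the polynomial growth of $f$ at $\infty$, while near $0$ one has $f(\lambda)=O(|\lambda|^{2a})$, killing the at most logarithmic growth of $\Xi$ — yields $\Tr D_f=-\frac{1}{2\pi\rmi}\int_{\widetilde\Gamma_{\epsilon'}}\Xi(\lambda)f'(\lambda)\,\der\lambda=\frac{\rmi}{2\pi}\int_{\widetilde\Gamma_{\epsilon'}}\Xi(\lambda)f'(\lambda)\,\der\lambda$. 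Letting $\epsilon'\uparrow\epsilon$ (continuity up to the contour, dominated convergence) gives the stated formula over $\widetilde\Gamma_\epsilon$. As a check, for even Schwartz $f$, deforming $\widetilde\Gamma_\epsilon$ down to $\R$ and using $\Xi(-\lambda)=\overline{\Xi(\lambda)}$ recovers $\Tr D_f=-\int_0^\infty\xi_{\rel}(\lambda)f'(\lambda)\,\der\lambda$, the relative Birman--Krein formula.

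\emph{Main obstacle.} The crux is the first step: organising the boundary layer representation of $R(\lambda^2)$ so that the single-obstacle data separate cleanly, and proving the uniform exponential trace-norm bound $O(e^{-\delta'\Im\lambda})$ for the residual cross-term $K_\lambda$ and its $\lambda$-derivative, together with correct control near $\lambda=0$ in even dimensions. This is where the geometric separation $\delta$ enters and where essentially all the analytic work lies; granting it, (1)--(4) follow by the formal manipulations above.
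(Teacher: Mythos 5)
Your construction of $\Xi$ and your treatment of item (4) follow the paper's route: your $K_\lambda$ is exactly the paper's $Q_\lambda\tQ_\lambda^{-1}-1=\calT_\lambda\tQ_\lambda^{-1}$ (Proposition \ref{prop:esti}), your regularised contour representation of $D_f$ is the paper's Riesz--Dunford argument with $g_n(z)=g(z)e^{-z/n}$ (Theorem \ref{trace}), and the identity $2\lambda\,\Tr R_{\rel,\lambda}=-\Xi'(\lambda)$, which you take from your ``equivalent definition'', is proved there via $\calS^\trans_\lambda\calS_\lambda=\frac{1}{2\lambda}\frac{d}{d\lambda}Q_\lambda$ together with the vanishing of $\Tr\bigl((\frac{d}{d\lambda}\calT_\lambda)\tQ_\lambda^{-1}\bigr)$, so that part is fine modulo writing it out. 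Two caveats on item (2): the inequality $|\log\det(I+K_\lambda)|\leq(1-\|K_\lambda\|)^{-1}\|K_\lambda\|_1$ needs $\|K_\lambda\|<1$, which you cannot guarantee for moderate $\lambda$ in the sector, and the bound on $\Xi'=\Tr\bigl((I+K_\lambda)^{-1}K_\lambda'\bigr)$ needs a sector-uniform operator bound on $(I+K_\lambda)^{-1}$, i.e.\ on $Q_\lambda^{-1}$; this is the paper's Corollary \ref{supercor}, obtained through the Dirichlet-to-Neumann estimate of Lemma \ref{Lem:NEstimate}, and it is an ingredient beyond the off-diagonal decay you list as the ``main obstacle'' (as is the cancellation of the $\log\lambda$-singular terms at $\lambda=0$ when $d$ is even, which you do flag).

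The genuine gaps are in (3) and in uniqueness. For (3) you invoke ``the relative version of the classical identification of $\det\mathbi{S}_\lambda$ with a boundary determinant''; no such relative identity is available off the shelf (Carron's formula concerns the $\zeta$-regularised determinant of $\calN_\lambda$, not the Fredholm determinant of $Q_\lambda\tQ_\lambda^{-1}$), and this identity is essentially the content of what is to be proved; moreover, comparing phases of unimodular quantities only fixes $\Im\Xi$ modulo $\pi$, so an additive constant still has to be pinned down. The paper proves (3) differently (Theorem \ref{xiodd}): Helffer--Sj\"ostrand representation for even compactly supported $f$, Stokes' theorem, comparison with the classical Birman--Krein formula to get $\frac{\rmi}{2\pi}(\Xi'(x)+\Xi'(-x))=\xi_\rel'(x)$ on an interval, extension by meromorphy, and fixing of the constants at $\lambda=0$ using Hahn-holomorphy, $\xi_\rel(\lambda)\to0$, and reality of $\Xi$ on the imaginary axis. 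For uniqueness, your Schwarz reflection plus Phragm\'en--Lindel\"of sketch does not close: hypothesis (2) gives decay only on sectors $\Im\lambda\geq\epsilon|\lambda|$ with $\epsilon$-dependent constants, and (1) and (3) give no a priori control of $\Re\Xi$ near the positive real axis, nor of $\Xi$ near the negative real axis or along directions tangent to $\R$ at $0$, so there is no boundary on which a Phragm\'en--Lindel\"of argument can be initialised (controlling only $\Im h$ on $(0,\infty)$ does not control $h$ there). The paper's uniqueness proof instead uses property (4) itself: testing against $\sqrt{\lambda^2}\,p(-\rmi\lambda)$ for even polynomials $p$ and deforming the contour to the positive imaginary axis shows that all moments of $h(x)=|x|\Theta'(\rmi|x|)$ vanish, the exponential decay makes $\hat h$ real analytic, hence $h\equiv0$, then $\Theta'\equiv0$ by meromorphic continuation and $\Theta\equiv0$ by decay. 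You should either adopt that moment argument or supply the missing growth control near the real axis; as written, the uniqueness step is not a proof.
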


We note here that the right hand side of the Birman-Krein formula is not well defined for these functions since neither $\xi_\rel(\lambda)$ nor 
 $\xi_\rel'(\lambda)$ are in $L^1(\R_+)$ in general: it follows from the wave trace expansion of  \cite{MR668585} that the cosine transform of 
 $\xi_\rel(\lambda)$ may have a discontinuity at lengths of non-degenerate simple bouncing ball orbits. This happens for example for two spheres.
 
 The function $\Xi$ can be explicitly given in terms of boundary layer operators. Let $Q_\lambda$ be the usual single layer operator on the boundary
 $\partial \calO$ (see Section \ref{layer:sec}). One can define the ''diagonal part''  $\tilde Q_\lambda$ of this operator
 by restricting the integral kernel to the subset
 $$
 \bigcup\limits_{j=1}^N \partial \calO_j \times \partial \calO_j \subset \partial \calO \times \partial \calO,
 $$
 thus excluding pairs of points on different connected components of $\partial \calO$ (see Section \ref{multisec} for more details). We then have
 
 \begin{theorem}\label{traceclass} 
  The operator $Q_\lambda \tQ_\lambda^{-1} - 1$ is trace-class and
  $$
   \Xi(\lambda) = \log \det\left( Q_\lambda \tQ_{\lambda}^{-1} \right).
  $$
 \end{theorem}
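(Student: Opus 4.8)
The plan is to set $\Psi(\lambda):=\log\det\big(Q_\lambda\tQ_\lambda^{-1}\big)$ on the upper half plane, to show directly that $Q_\lambda\tQ_\lambda^{-1}-1$ is trace class there, and then to verify that $\Psi$ has the three properties (1)--(3) characterising the function $\Xi$ of Theorem~\ref{theoremxirel}, so that the uniqueness clause of that theorem forces $\Psi=\Xi$.

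For the trace-class statement I would use the orthogonal decomposition $L^2(\partial\calO)=\bigoplus_{j=1}^N L^2(\partial\calO_j)$, with respect to which $\tQ_\lambda$ is by construction the block-diagonal part $\bigoplus_j Q_{j,\lambda}$, $Q_{j,\lambda}$ the single layer operator on $\partial\calO_j$. Then $Q_\lambda-\tQ_\lambda$ has integral kernel $G_\lambda(x,y)$ for $x\in\partial\calO_j$, $y\in\partial\calO_k$ with $j\neq k$ and $0$ on the diagonal blocks; since $\dist(\calO_j,\calO_k)\geq\delta>0$ we have $|x-y|\geq\delta$ on its support, where the free Green function $G_\lambda$ is smooth (indeed real analytic), so $Q_\lambda-\tQ_\lambda$ is a smoothing operator on the compact manifold $\partial\calO$, with the $C^k$-norms of its kernel bounded by $\mathrm{poly}(|\lambda|)\,\ee^{-\delta\Im\lambda}$ for $\Im\lambda\geq0$. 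For $\Im\lambda>0$ each $Q_{j,\lambda}$ is an invertible elliptic pseudodifferential operator of order $-1$ — its kernel is trivial because $\lambda^2$ is not an interior Dirichlet eigenvalue of $\calO_j$ — so $\tQ_\lambda^{-1}=\bigoplus_j Q_{j,\lambda}^{-1}$ has order $+1$ and $Q_\lambda\tQ_\lambda^{-1}-1=(Q_\lambda-\tQ_\lambda)\tQ_\lambda^{-1}$ is again smoothing, in particular trace class. The same argument shows $Q_\lambda$ itself is invertible for $\Im\lambda>0$ (the interior Dirichlet spectrum of $\calO=\bigsqcup_j\calO_j$ is the union of those of the $\calO_j$), so $\det\big(Q_\lambda\tQ_\lambda^{-1}\big)=\det\big(1+(Q_\lambda-\tQ_\lambda)\tQ_\lambda^{-1}\big)$ never vanishes on the upper half plane, and since the correction tends to $0$ in trace norm as $\Im\lambda\to\infty$ within any sector $\Im\lambda\geq\epsilon|\lambda|$ there is a distinguished holomorphic branch of $\Psi$ with $\Psi(\lambda)\to0$ in such a sector. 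With $\Psi$ so defined, property (1) (meromorphic continuation of $\Psi'$ to the logarithmic cover, and to $\C$ for $d$ odd) follows from that of $G_\lambda$, hence of $Q_\lambda$, $\tQ_\lambda$, $\tQ_\lambda^{-1}$ and of the Fredholm determinant — the apparent singularities at the interior Dirichlet eigenvalues of the $\calO_j$ being removable because those of $Q_\lambda$ and of $\tQ_\lambda^{-1}$ cancel in the determinant — and property (2) follows from $\log\det(1+A)=\Tr A+O(\|A\|_1^2)$ together with $\|(Q_\lambda-\tQ_\lambda)\tQ_\lambda^{-1}\|_1\leq\mathrm{poly}(|\lambda|)\,\ee^{-\delta\Im\lambda}$ (using the at most polynomial growth of $\|\tQ_\lambda^{-1}\|$ on Sobolev scales), since in the region $\Im\lambda\geq\epsilon|\lambda|$ one has $|\lambda|\leq\epsilon^{-1}\Im\lambda$ and thus $\mathrm{poly}(|\lambda|)\,\ee^{-\delta\Im\lambda}\leq C_{\delta',\epsilon}\,\ee^{-\delta'\Im\lambda}$ for any $\delta'<\delta$; differentiating in $\lambda$ costs only another polynomial factor.

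The essential point, and the step I expect to be the main obstacle, is property (3), namely $\tfrac1\pi\Im\Psi(\lambda)=-\xi_{\rel}(\lambda)$ for $\lambda>0$. Here I would use the boundary layer representation of the scattering matrix established in Section~\ref{layer:sec}: up to the normalising constants fixed there, $\mathbi{S}_\lambda=\Id-c_\lambda\,\calB_\lambda Q_\lambda^{-1}\gamma_\lambda$, where $\gamma_\lambda\colon C^\infty(\sphere)\to C^\infty(\partial\calO)$ restricts a Herglotz wave packet to $\partial\calO$ and $\calB_\lambda$ is the associated far-field map, together with the resolution-of-the-identity relation $c_\lambda\,\gamma_\lambda\calB_\lambda=Q_\lambda-\overline{Q_\lambda}$ on the real axis, which reflects $G_\lambda-\overline{G_\lambda}=c_\lambda\int_{\sphere}\ee^{-\rmi\lambda\langle x-y,\theta\rangle}\,\der\theta$. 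The Sylvester identity $\det(\Id-AB)=\det(\Id-BA)$, valid when the products are trace class, then gives for $\lambda>0$ outside the discrete interior Dirichlet spectrum
\[
 \det\mathbi{S}_\lambda=\det\big(Q_\lambda^{-1}(Q_\lambda-c_\lambda\gamma_\lambda\calB_\lambda)\big)=\det\big(\overline{Q_\lambda}\,Q_\lambda^{-1}\big),
\]
and the same applied to each $\calO_j$ gives $\det\mathbi{S}_{j,\lambda}=\det\big(\overline{Q_{j,\lambda}}\,Q_{j,\lambda}^{-1}\big)$, hence $\prod_j\det\mathbi{S}_{j,\lambda}=\det\big(\overline{\tQ_\lambda}\,\tQ_\lambda^{-1}\big)$. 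Factoring
\[
 \overline{Q_\lambda}\,Q_\lambda^{-1}=\overline{\big(Q_\lambda\tQ_\lambda^{-1}\big)}\cdot\big(\overline{\tQ_\lambda}\,\tQ_\lambda^{-1}\big)\cdot\big(Q_\lambda\tQ_\lambda^{-1}\big)^{-1}
\]
into three factors each of the form $\Id$ plus a trace-class (smoothing) operator, so that the Fredholm determinant is multiplicative, yields
\[
 \frac{\det\mathbi{S}_\lambda}{\prod_j\det\mathbi{S}_{j,\lambda}}=\frac{\overline{\det(Q_\lambda\tQ_\lambda^{-1})}}{\det(Q_\lambda\tQ_\lambda^{-1})}=\ee^{-2\rmi\,\Im\Psi(\lambda)},\qquad\lambda>0.
\]
Comparing with \eqref{xireldef}, which expresses the left-hand side as $\ee^{2\pi\rmi\,\xi_{\rel}(\lambda)}$ (of modulus one, as $\mathbi{S}_\lambda$ is unitary), gives $\Im\Psi(\lambda)+\pi\,\xi_{\rel}(\lambda)\in\pi\Ganz$ for $\lambda>0$; by continuity this is a constant integer multiple of $\pi$, and it vanishes — e.g.\ by evaluating at $\lambda\to0$, where $Q_\lambda\tQ_\lambda^{-1}$ is a real operator and $\xi_{\rel}$ vanishes by its normalisation, or because a nonzero value would force $\Psi-\Xi$ to have nonzero constant imaginary part on $\R$ while decaying in a sector, which is impossible. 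This gives the first identity in property (3), and the reflection $\overline{\Psi(\lambda)}=\Psi(-\lambda)$ on $\R_+$ (from $\overline{G_\lambda}=G_{-\lambda}$ there) gives the second; by the uniqueness in Theorem~\ref{theoremxirel} we conclude $\Psi=\Xi$. The delicate parts of this last step are the precise form of the layer representation and the constant $c_\lambda$ (done in Section~\ref{layer:sec}), keeping careful track of which operator products are genuinely trace class so that every determinant and every factorisation above is legitimate, and matching the branch of $\log\det(Q_\lambda\tQ_\lambda^{-1})$ with the branch of $\xi_{\rel}$.
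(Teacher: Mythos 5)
Your first half is fine and is essentially the paper's own argument: writing $Q_\lambda=\tQ_\lambda+\calT_\lambda$, noting that the off-diagonal part $\calT_\lambda$ has a smooth kernel supported at distance $\geq\delta$ and hence is smoothing with exponentially decaying norms, and using invertibility of $\tQ_\lambda$ (no interior Dirichlet eigenvalues in the upper half plane) to conclude that $Q_\lambda\tQ_\lambda^{-1}-1=\calT_\lambda\tQ_\lambda^{-1}$ is trace class with $\|\cdot\|_1\leq C_{\delta',\epsilon}e^{-\delta'\Im\lambda}$; this is Proposition \ref{Prop:CalT}, Lemma \ref{Lemma23} and Proposition \ref{prop:esti} (with Theorem \ref{cor33} giving the bounds on $\Psi$ and $\Psi'$, where the $\lambda\to0$ behaviour in even dimensions needs the Hahn-holomorphy cancellations that you pass over). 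The genuine gap is in the identification step. The uniqueness clause of Theorem \ref{theoremxirel} is uniqueness among functions satisfying \emph{all four} properties, and its proof in the paper pins down the difference $\Theta=\Xi-\tilde\Xi$ precisely by testing property (4) against the functions $\sqrt{\lambda^2}\,p(-\rmi\lambda)$, $p$ an even polynomial; properties (1)--(3) are never used there, and nothing in the paper (nor in your proposal) shows that (1)--(3) alone characterise $\Xi$. So after verifying (1)--(3) for $\Psi=\log\det(Q_\lambda\tQ_\lambda^{-1})$ you are not entitled to conclude $\Psi=\Xi$. To close the argument you would have to verify (4) for $\Psi$, i.e. $\Tr(D_f)=\frac{\rmi}{2\pi}\int_{\widetilde\Gamma_\epsilon}\Psi f'\,d\lambda$ -- but that is exactly the paper's central computation (Theorem \ref{trace}): $\Tr R_{\rel,\lambda}=-\frac{1}{2\lambda}\frac{d}{d\lambda}\log\det(Q_\lambda\tQ_\lambda^{-1})$, obtained from $\calS_\lambda^\trans\calS_\lambda=\frac{1}{2\lambda}\frac{d}{d\lambda}Q_\lambda$ and $\Tr\bigl((\frac{d}{d\lambda}\calT_\lambda)\tQ_\lambda^{-1}\bigr)=0$, followed by the contour representation of $D_f$. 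Your plan routes around precisely this step, so as written it is circular with respect to what Theorem \ref{traceclass} is supposed to deliver.

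A secondary problem is your verification of (3): the representation $\mathbi{S}_\lambda=\Id-c_\lambda\,\calB_\lambda Q_\lambda^{-1}\gamma_\lambda$ together with $c_\lambda\gamma_\lambda\calB_\lambda=Q_\lambda-\overline{Q_\lambda}$ is \emph{not} established in Section \ref{layer:sec} (that section only contains mapping properties of $\calS_\lambda,Q_\lambda,K_\lambda$, the Dirichlet-to-Neumann estimates and the resolvent difference formula), so you are citing a result the paper does not contain. The resulting identity $\det\mathbi{S}_\lambda=\det(\overline{Q_\lambda}Q_\lambda^{-1})$ and your three-factor splitting are plausible and close in spirit to the Eckmann--Pillet/Carron-type formulae mentioned in the introduction, and would give an interesting alternative proof of Theorem \ref{xiodd}; but to use it you would have to prove the layer representation with the normalisation of $\mathbi{S}_\lambda$ fixed by the asymptotics of $E_\lambda(\Phi)$, check the trace-class hypotheses for the Sylvester identity, handle even dimensions and the interior eigenvalues, and control the branch of the logarithm. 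The paper instead proves (3) without any scattering-matrix factorisation, by the Helffer--Sj\"ostrand formula and comparison with the classical Birman--Krein formula for compactly supported even functions, plus the conjugation symmetry $-\Xi'(-\lambda)=\overline{\Xi'(\lambda)}$. So: trace-class part correct and as in the paper; the identification $\Psi=\Xi$ has a real logical gap (missing property (4)) and rests on an unproved representation of $\mathbi{S}_\lambda$.
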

 
Since, by (3) of Theorem \ref{theoremxirel}, $-\frac{1}{\pi} \Im \Xi(\lambda) = \xi_{\rel}(\lambda)$ this theorem also yields a gluing formula for the spectral shift function in the sense that $\xi(\lambda)$ is expressed as the spectral shift function of the individual objects plus a term that is expressed in terms of boundary layer operators.

In particular, for $s \in \C$ with $\Re(s)>0$ we may choose $\epsilon = \pi$, $g(\lambda) = \lambda^{s}$ and $f(\lambda) = g(\lambda^2)$ as before. As the branch cut for $g$ was taken to be the negative real axis, $f$ is in general discontinuous along the imaginary axis. Evaluating the contour integral from Theorem \ref{theoremxirel} (4) now gives the formula 
\begin{gather} \label{CasEnprep} 
  \Tr \biggl( \Delta^{s} - \sum_{j=1}^N \Delta_j^{s}  + (N-1) \Delta_0^{s} \biggr) = \frac{2 s}{\pi} \sin(\pi s) \int_0^{\infty} \lambda^{2s-1} \Xi(\rmi \lambda) d \lambda,
\end{gather} which for $s=\frac{1}{2}$ specialises to 
\begin{gather} \label{CasEn} 
  \Tr \biggl( \Delta^{\frac{1}{2}} - \sum_{j=1}^N \Delta_j^{\frac{1}{2}}  + (N-1) \Delta_0^{\frac{1}{2}} \biggr) = \frac{1}{\pi} \int_0^{\infty} \Xi(\rmi \lambda) d \lambda.
\end{gather}
Since the operators $\Delta^{\frac{1}{2}}$ are unbounded it may come as a surprise that the linear combination in the above formula is trace-class.\\

Our method of proof is to reduce the computations to the boundary using single and double layer operators. Some of the ingredients, namely a polynomial bound of the Dirichlet-to-Neumann operator (Corollary \ref{estimated2n}) and on the inverse of the single layer operator (Corollary \ref{supercor}), both in a sector of the complex plane,  may be interesting in their own right. Similar estimates for the Dirichlet-to-Neumann map in the complex plane have been proved in \cite{MR2971206} (see also \cite{MR3448345}). Recently there has been interest in bounds on the Dirichlet-to-Neumann operator and layer potentials and their inverses on the real line (see e.g. \cite{MR3394624}).

\subsection{Applications in Physics}
The left hand side of \eqref{CasEn} has an interpretation in the physics of quantum fields. It is the vacuum energy of the free massless scalar field of the assembled objects relative to the objects being separated.
This energy is used to compute Casimir forces between objects and the above formula provides a formula for the forces in terms of the
function $\Xi$ which can be constructed out of the scattering matrix.
We note that the right hand side of Eq.\eqref{CasEn} was used to compute Casimir energies between objects, which was an important development that lead to more efficient numerical algorithms. We would like to refer to \cite{johnson2011numerical} and references therein. This approach uses a formalism that relates the Casimir energy to a determinant computed from boundary layer operators. Such determinant formulae result in finite quantities that do not require further regularisation and have been obtained and justified in the physics literature \cite{EGJK2007,emig2008casimir, kenneth06, kenneth08, milton08,renne71}. 
These justifications and derivations are largely formal and involve cut-offs, regularisation procedures and also ill defined path integrals.   Our work therefore make these statements mathematically rigorous and generalises them.

\section{Layer potentials} \label{layer:sec}

The Green's function for the Helmholtz equation, i.e. the integral kernel of the resolvent $(\Delta_0 - \lambda^2)^{-1}$ will be denoted by $G_{\lambda,0}$. We have explicitly,
\begin{align} \label{eqn:GreensFunction}
  G_{\lambda,0}(x,y) =  \frac{\rmi}{4} \left(  \frac{\lambda}{2 \pi |x-y|} \right)^{\frac{d-2}{2}} \mathrm{H}^{(1)}_{\frac{d-2}{2}}( \lambda |x-y|).
\end{align}
Here $\mathrm{H}^{(1)}_{\alpha}$ denotes the Hankel function of the first kind of order $\alpha$ (see Appendix \ref{AA}).
In particular, in dimension three:
$$
 G_{\lambda,0}(x,y) = \frac{1}{4 \pi} \frac{e^{\rmi \lambda |x-y|}}{|x-y|}.
$$
As usual we identify integral kernels and operators, so that $G_{\lambda,0}$ coincides with the resolvent $(\Delta_0 - \lambda^2)^{-1}$ for $\Im(\lambda)>0$.
Note that in case the dimension $d$ is even the above Hankel function fails to be analytic at zero. In that case one can however write
$$
 G_{\lambda,0} = \tilde G_{\lambda,0} + F_\lambda\, \lambda^{d-2} \log(\lambda),
$$
where $\tilde G_{\lambda,0}$ is an entire family of operators $H^s_{\compp}(\R^d) \to H^{s+2}_{\loc}(\R^d)$ and $F_\lambda$ is an entire family of operators with smooth integral kernel
$$
F_\lambda(x,y) = \frac{1}{2 \rmi} (2 \pi)^{-(d-1)}\int_{\sphere} e^{\rmi \lambda \theta (x-y)} d \theta,
$$  
that is even in $\lambda$, c.f. \cite{melrosebook} Ch. 2. Moreover we have that 
\begin{align}
G_{-\lambda,0}(x,y)=\overline{G_{\lambda,0}(x,y)} \quad \text{for} \quad \lambda>0.
\end{align}
The single and double layer potential operators are continuous maps
$$\calS_\lambda: C^\infty({\partial \calO} ) \to  C^\infty({ \overline{\calO}} ) \oplus C^\infty({\overline{M}} ) \subset  C^\infty(\R^d \setminus \partial \calO)
$$ 
and 
$$
\calD_\lambda: C^\infty({\partial \calO} ) \to  C^\infty({ \overline{\calO}} ) \oplus C^\infty({\overline{M}} ) \subset C^\infty(\R^d \setminus \partial \calO),
$$
given by
\begin{gather*}
 \calS_\lambda f(x) = \int_{\partial \calO} G_{\lambda,0}(x,y) f(y) d\sigma(y),\\
 \calD_\lambda f(x) = \int_{\partial \calO}  (\partial_{\nu,y} G_{\lambda,0}(x,y)) f(y) d\sigma(y),
\end{gather*}
where $\partial_{\nu,y}$ denotes the outward normal derivative and $\sigma$ is the surface measure.
We also define $Q_\lambda:  C^\infty({\partial \calO} ) \to C^\infty({\partial \calO} )$ and  $K_\lambda:  C^\infty({\partial \calO} ) \to C^\infty({\partial \calO} )$ by
\begin{gather*}
 Q_\lambda f(x) = \int_{\partial \calO} G_{\lambda,0}(x,y) f(y) d\sigma(y),\\
 K_\lambda f(x) = \int_{\partial \calO}  (\partial_{\nu,y} G_{\lambda,0}(x,y)) f(y) d\sigma(y).
\end{gather*}

Let as usual $\gamma^+$ denote the exterior restriction map $C^\infty(\overline M) \to C^\infty(\partial \calO)$ and $\partial_\nu^+: C^\infty(\overline M) \to C^\infty(\partial \calO)$ the restriction of the inward pointing normal derivative. Similarly, $\gamma^-$ denotes the interior restriction map $C^\infty(\overline \calO) \to C^\infty(\partial \calO)$ and 
$\partial_\nu^-: C^\infty(\overline \calO) \to C^\infty(\partial \calO)$ the restriction of the outward pointing normal derivative.
If $w$ is a function in $C^\infty(\overline{M})$ with $(\Delta - \lambda^2) w = 0$  such that either
\begin{itemize}
 \item[(a)] $\Im \lambda >0$ and $w \in L^2(M)$, or
 \item[(b)] $\lambda \in \R$ and $w$ satisfies the Sommerfeld radiation condition,
\end{itemize}
then we have for $x \in M$ (for example \cite{Taylorbook2}, chapter 9 (1.4) \& (7.60)):
$$
 w(x) =  \calS_\lambda (\partial_\nu^+ w)(x) + \calD_\lambda (\gamma^+ w)(x).
$$
If $w \in C^\infty(\overline{\calO})$ satisfies $(\Delta - \lambda^2) w = 0$ then
$$
 w(x) =  \calS_\lambda (\partial_\nu^- w)(x) - \calD_\lambda (\gamma^- w)(x).
$$
We have the following well known relations (for example \cite{Taylorbook2}, chapter 9 (7.6)): 
\begin{gather*}
 \gamma^\pm \calS_\lambda = Q_\lambda,\\
 \gamma^\pm \calD_\lambda = \pm \frac{1}{2} \Id + K_{\lambda}.
\end{gather*}

The following summarises the mapping properties of the layer potential operators.
\begin{proposition} \label{prop1}
 The maps $Q_\lambda, \calS_\lambda$, and $K_\lambda$  extend by continuity to larger spaces as follows:
 \begin{enumerate}
  \item  \label{eins} If $s<0$ and $\Im{\lambda}>0$ then $\calS_\lambda$ extends to a holomorphic family of maps 
	$$\mathcal{S_\lambda}: H^{s}(\partial \calO) \to H^{s+3/2}(\R^d).$$
  \item  \label{zwei} If $\Im{\lambda}>0$ and $\Re{\lambda} \neq 0$, then $\| \mathcal{S}_\lambda \|_{H^{-\frac{1}{2}}(\partial \calO) \to L^2(\R^d)} \leq C  \frac{\sqrt{1+|\lambda|^2}}{|\Re(\lambda) \Im(\lambda)|}$.
  \item \label{vier} If $s<0$ then $\calS_\lambda$ extends to a family of maps $\mathcal{S_\lambda}: H^{s}(\partial \calO) \to H^{s+3/2}_{\loc}(\R^d)$
	of the form $\mathcal{S_\lambda} = s_\lambda + m_\lambda \lambda^{d-2} \log(\lambda)$, where $s_\lambda=\tilde G_{\lambda,0} \gamma^*$ is an entire family of operators
	$s_\lambda: H^{s}(\partial \calO) \to H^{s+3/2}_{\loc}(\R^d)$ and $F_{\lambda} \gamma^*=m_\lambda: H^{s}(\partial \calO) \to C^\infty(\R^d)$ is an entire family
	of smoothing operators. Moreover, $m_\lambda=0$ in case the dimension $d$ is odd.
  \item  \label{fuenf} The operator $Q_\lambda$ can be written as $Q_\lambda = q_\lambda + r_{\lambda} \lambda^{d-2} \log(\lambda)$, where
	$q_\lambda$ is an entire family of pseudodifferential operators of order $-1$, and $r_\lambda=\gamma F_{\lambda}\gamma^*$ is an entire family of smoothing operators. In case $d$ is odd we have $r_\lambda=0$.
  \item  \label{sechs} The operator  $K_\lambda$ can be written as $K_\lambda = k_\lambda + \tilde r_\lambda \lambda^{d-1} \log(\lambda)$,
	where $k_\lambda$ is an entire family of pseudodifferential operators of order $0$, and $\tilde r_\lambda$ is an entire family of smoothing operators. In case $d$ is odd we have $\tilde{r}_\lambda=0$.
  \end{enumerate}
\end{proposition}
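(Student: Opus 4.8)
The plan is to reduce all three operators to the free resolvent $R_\lambda:=(\Delta_0-\lambda^2)^{-1}$, to the kernel decomposition $G_{\lambda,0}=\tilde G_{\lambda,0}+F_\lambda\,\lambda^{d-2}\log\lambda$ recalled above, and to the standard Sobolev mapping properties of the trace operators and their transposes. Write $\gamma$ for the two‑sided restriction to $\partial\calO$, so that $\gamma\colon H^t(\R^d)\to H^{t-1/2}(\partial\calO)$ is bounded for $t>\tfrac12$ and $\partial_\nu\colon H^t(\R^d)\to H^{t-3/2}(\partial\calO)$ for $t>\tfrac32$; taking transposes, $\gamma^*\colon H^s(\partial\calO)\to H^{s-1/2}(\R^d)$ and $\partial_\nu^*\colon H^s(\partial\calO)\to H^{s-3/2}(\R^d)$ are bounded for $s<0$, both landing in compactly supported distributions. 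One then has $\calS_\lambda=R_\lambda\gamma^*$ and $Q_\lambda=\gamma R_\lambda\gamma^*$, and, using the jump relation $\gamma^\pm\calD_\lambda=\pm\tfrac12\Id+K_\lambda$ recorded above, $K_\lambda=\tfrac12(\gamma^++\gamma^-)\calD_\lambda$ with $\calD_\lambda=R_\lambda\,\partial_\nu^*$ (up to the sign fixed by the orientation of $\nu$), equivalently $K_\lambda$ is the operator on $\partial\calO$ with Schwartz kernel $\partial_{\nu,y}G_{\lambda,0}(x,y)$.

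Statement (1) is then immediate: for $\Im\lambda>0$ the resolvent $R_\lambda$ is bounded on $L^2(\R^d)$, commutes with $\Delta_0$, hence is a holomorphic family $H^t(\R^d)\to H^{t+2}(\R^d)$ for every $t$, and composing with $\gamma^*$ gives a holomorphic family $H^s(\partial\calO)\to H^{s+3/2}(\R^d)$. For (2), factor $\gamma^*=(\Delta_0+1)^{1/2}B$ with $B:=(\Delta_0+1)^{-1/2}\gamma^*\colon H^{-1/2}(\partial\calO)\to L^2(\R^d)$ bounded independently of $\lambda$ (since $\gamma^*\colon H^{-1/2}\to H^{-1}$ and $(\Delta_0+1)^{-1/2}\colon H^{-1}\to L^2$); then $\calS_\lambda=\bigl(R_\lambda(\Delta_0+1)^{1/2}\bigr)B$ and, by the functional calculus, $\|R_\lambda(\Delta_0+1)^{1/2}\|_{L^2\to L^2}=\sup_{\mu\geq0}\frac{\sqrt{1+\mu}}{|\mu-\lambda^2|}$. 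An elementary estimate of this supremum -- for $\mu\leq 2(1+|\lambda|^2)$ one has $|\mu-\lambda^2|\geq|\Im(\lambda^2)|=2|\Re(\lambda)\Im(\lambda)|$, while for $\mu\geq 2(1+|\lambda|^2)$ one has $|\mu-\lambda^2|\geq\mu/2$ so the quotient is $\leq 2(1+|\lambda|^2)^{-1/2}$, which is absorbed since $2|\Re(\lambda)\Im(\lambda)|\leq|\lambda|^2\leq 1+|\lambda|^2$ -- yields $\|\calS_\lambda\|_{H^{-1/2}(\partial\calO)\to L^2(\R^d)}\leq C\frac{\sqrt{1+|\lambda|^2}}{|\Re(\lambda)\Im(\lambda)|}$.

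For (3)--(5) we substitute the decomposition of $G_{\lambda,0}$. The part $\tilde G_{\lambda,0}$ is an entire family $H^s_{\compp}(\R^d)\to H^{s+2}_{\loc}(\R^d)$ and, agreeing for $\Im\lambda>0$ with the resolvent of the elliptic operator $\Delta_0-\lambda^2$, is an entire family of classical pseudodifferential operators of order $-2$ on $\R^d$ whose diagonal singularity coincides with that of the Laplace Green's function ($c_d|x-y|^{2-d}$, resp.\ $\log|x-y|$ if $d=2$) and is therefore $\lambda$‑independent. Hence $s_\lambda:=\tilde G_{\lambda,0}\gamma^*$ is an entire family $H^s(\partial\calO)\to H^{s+3/2}_{\loc}(\R^d)$ (mapping into $H^{s+3/2}(\R^d)$ when $\Im\lambda>0$, so that (1) is its restriction), $q_\lambda:=\gamma\tilde G_{\lambda,0}\gamma^*$ is an entire family of classical pseudodifferential operators of order $-2+\tfrac12+\tfrac12=-1$ on $\partial\calO$, and $k_\lambda$, defined analogously from $\tilde G_{\lambda,0}$ and $\partial_\nu^*$, is an entire family of order $-2+\tfrac12+\tfrac32=0$. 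On the other hand the kernel $F_\lambda(x,y)=\frac{1}{2\rmi}(2\pi)^{-(d-1)}\int_{\sphere}e^{\rmi\lambda\theta(x-y)}\,d\theta$ is jointly smooth and entire in $\lambda$, so $m_\lambda:=F_\lambda\gamma^*$ and $r_\lambda:=\gamma F_\lambda\gamma^*$ are entire smoothing operators; because $\partial_{\nu,y}$ applied to $F_\lambda(x,y)$ produces exactly one power of $\lambda$ times a smooth entire kernel, the logarithmic term of $K_\lambda$ is $\lambda^{d-1}\log\lambda\cdot\tilde r_\lambda$ with $\tilde r_\lambda$ entire and smoothing. Finally $F_\lambda\equiv0$ when $d$ is odd (then the Hankel function, hence $G_{\lambda,0}$, is entire in $\lambda$), so $m_\lambda=r_\lambda=\tilde r_\lambda=0$ in odd dimensions.

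The step that requires real work is the claim that $q_\lambda$ and $k_\lambda$ are \emph{classical pseudodifferential} operators of orders $-1$ and $0$ with holomorphically $\lambda$‑dependent symbols, rather than merely maps of the corresponding Sobolev order. This is the classical fact that layer potential operators on a smooth hypersurface are pseudodifferential (cf.\ \cite{Taylorbook2}); I would establish it here by passing to boundary normal coordinates near $\partial\calO$ and computing the full symbol of $\gamma R_\lambda\gamma^*$ and $\gamma R_\lambda\partial_\nu^*$ from the order $-2$ elliptic parametrix of $\Delta_0-\lambda^2$, checking that the pushforward to $\partial\calO$ is a classical symbol, that the extra half‑derivative lost through $\partial_\nu^*$ compared with $\gamma^*$ raises the order from $-1$ to $0$, and that all symbol terms depend holomorphically on $\lambda$ (the $\lambda$‑independence of the leading diagonal singularity giving $\lambda$‑independent principal symbols). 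An alternative that avoids symbol computations is to fix $\lambda_0$ with $\Im\lambda_0>0$, invoke the classical elliptic theory for $q_{\lambda_0}$ and $k_{\lambda_0}$, and control $q_\lambda-q_{\lambda_0}$ and $k_\lambda-k_{\lambda_0}$ through the resolvent identity $R_\lambda-R_{\lambda_0}=(\lambda^2-\lambda_0^2)R_\lambda R_{\lambda_0}$, which gains two extra derivatives and makes the holomorphic dependence on $\lambda$ transparent.
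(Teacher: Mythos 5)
Your proposal is correct and follows essentially the same route as the paper: the factorisations $\calS_\lambda=G_{\lambda,0}\gamma^*$, $Q_\lambda=\gamma G_{\lambda,0}\gamma^*$ together with the Sobolev mapping properties of $\gamma^*$ and the resolvent give (1) and (3), the spectral-theorem bound on $(1+\Delta_0)^{1/2}(\Delta_0-\lambda^2)^{-1}$ gives (2), and the kernel decomposition $G_{\lambda,0}=\tilde G_{\lambda,0}+F_\lambda\lambda^{d-2}\log\lambda$ yields the log-splittings in (3)--(5). The only difference is that where the paper simply cites as well known that $Q_\lambda$ and $K_\lambda$ are pseudodifferential with holomorphically $\lambda$-dependent symbols, you sketch how to establish this (boundary normal coordinates or a resolvent-identity reduction), and you spell out the elementary supremum estimate in (2); both are consistent with the paper's argument.
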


\begin{proof}
 First note that $\calS_\lambda = G_{\lambda,0} \gamma^*$, where $\gamma: H^{s + \frac{1}{2}}(\R^d) \to H^{s}(\partial \calO)$ is the restriction map for $s>0$ and $\gamma^*: H^{-s}(\partial \calO) \to H^{-s-\frac{1}{2}}(\R^d)$ is its dual. The first statement \eqref{eins} then follows from the mapping property of 
 $G_{\lambda,0}$ since $G_{\lambda,0}: H^{s}(\R^d) \to  H^{s+2}(\R^d)$ continuously. Statement \eqref{vier} follows in the same way from the continuity of
 $\tilde G_{\lambda,0}: H^{s}_{\compp}(\R^d) \to  H^{s+2}_\loc(\R^d)$ which can easily be obtained from the explicit representation of the integral kernel.
 It is well known that $Q_\lambda$ and $K_\lambda$ are pseudodifferential operators and their full symbol depends holomorphically on $\lambda$. The representations in \eqref{fuenf}  and \eqref{sechs} then follow from the explicit form of the integral kernel. 
 To show \eqref{zwei} recall that the map $\gamma^*: H^{-\frac{1}{2}}(\partial \calO) \to H^{-1}(\R^d)$ is continuous. 
 Thus, the operator norm
 of $\calS_\lambda: H^{-\frac{1}{2}}(\partial\calO) \to L^2(\R^d)$ is bounded by a constant times the norm of $(1+\Delta_0)^{\frac{1}{2}} (\Delta_0-\lambda^2)^{-1}$. Using the spectral representation of $\Delta_0$ one sees the this norm equals $\sup_{x \in \R} |\frac{\sqrt{1+x^2}}{x^2-\lambda^2}| \leq \frac{\sqrt{1+|\lambda|^2}}{2 |\Re(\lambda) \Im(\lambda)|}$.  
 \end{proof}
 
 \bigskip
 
 Let the function $\rho$ be defined as
\begin{align}\label{rho}
  \rho(t) :=
 \left\{ 
 \begin{array}{llc}
   t^{d-4}       & \text{ if } d = 2,3 & \\
   |\log t | + 1 & \text{ if } d = 4   & \text{ and } 0 \leq t \leq 1\\
   t^0 = 1       & \text{ if } d \geq 5 & \\
   1             &                      &  \text{ for } t > 1.
 \end{array}  
 \right.\\ \notag
\end{align}
  
 The next proposition establishes properties of the single layer operator $\mathcal{S}_\lambda$. 
\begin{proposition} \label{Prop:EstimateS}
 For $\epsilon \in (0,\frac{\pi}{2})$, for all $\lambda\in\mathfrak{D}_{\epsilon}$ we have the following bounds:
 \begin{enumerate}
  \item \label{einsbound} Let $\Omega \subset \R^d$ be an open set with smooth boundary. Set $\delta:=\mathrm{dist}(\Omega,\partial \calO) > 0$ and let $0 < \delta'< \mathrm{dist}(\Omega,\partial \calO) $.
  Assume that $\chi \in L^\infty(\R^d)$ has support in $\Omega$. Let $s \in \R$, then there exists $C_{\delta',\epsilon}>0$ such that for 
  $\lambda \in \mathfrak{D}_\epsilon$ we have that $\chi \calS_\lambda: H^s(\partial \calO) \to L^2(\R^d)$
    is a Hilbert-Schmidt operator whose Hilbert-Schmidt norm is bounded by  
    \begin{align} \label{eqn:HSEstimateS}
    \| \chi \calS_\lambda \|_{\mathrm{HS}(H^s \to L^2)} \leq C_{\delta',\epsilon} \rho(\Im\lambda)^{\frac{1}{2}} e^{-\delta' \Im{\lambda}} 
    \end{align}
  \item  \label{zweibound} For $\lambda \in \mathfrak{D}_\epsilon$ we have $$\| \mathcal{S}_\lambda \|_{H^{-\frac{1}{2}}(\partial \calO) \to L^2(\R^d)} \leq C_{\delta',\epsilon}\left(\rho(\Im\lambda)^{\frac{1}{2}}+1\right).$$
\end{enumerate}
  \end{proposition}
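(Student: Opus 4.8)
The plan is to derive both bounds from the explicit kernel $G_{\lambda,0}$ and the classical asymptotics of the Hankel function $\mathrm{H}^{(1)}_{\frac{d-2}{2}}$ collected in Appendix \ref{AA}. For $\Im z\geq 0$ these give $|\mathrm{H}^{(1)}_\nu(z)|\lesssim |z|^{-1/2}e^{-\Im z}$ when $|z|\geq 1$, and $|\mathrm{H}^{(1)}_\nu(z)|\lesssim |z|^{-\nu}$ ($\nu>0$), $|\mathrm{H}^{(1)}_0(z)|\lesssim 1+|\log|z||$ when $|z|\leq 1$, with analogous bounds for derivatives in which each differentiation produces a factor $C(|\lambda|+|x-y|^{-1})$ and, via $(\mathrm{H}^{(1)}_\nu)'=\mathrm{H}^{(1)}_{\nu-1}-\tfrac{\nu}{z}\mathrm{H}^{(1)}_\nu$, lowers the order. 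On $\mathfrak{D}_\epsilon$ one has $\Im\lambda\geq(\sin\epsilon)|\lambda|$, so $\Im\lambda$ and $|\lambda|$ are comparable; since $|x-y|\geq\delta$ on all the sets that occur, $z=\lambda|x-y|$ has $\Im z\geq\delta\,\Im\lambda$, and assembling these bounds with the prefactor $(\lambda/|x-y|)^{\frac{d-2}{2}}$ gives, for every multi-index $\alpha$ and every $\lambda\in\mathfrak{D}_\epsilon$,
\[
  |\partial_y^\alpha G_{\lambda,0}(x,y)|\ \leq\ C_{\alpha,\delta}\,(1+|\lambda|)^{|\alpha|}
  \begin{cases}
    |x-y|^{-(d-2)} & \text{if }|\lambda||x-y|\leq 1,\ d\geq 3,\\
    1+|\log(|\lambda||x-y|)| & \text{if }|\lambda||x-y|\leq 1,\ d=2,\\
    |\lambda|^{\frac{d-3}{2}}|x-y|^{-\frac{d-1}{2}}\,e^{-|x-y|\Im\lambda} & \text{if }|\lambda||x-y|\geq 1.
  \end{cases}
\]
Two elementary remarks are used throughout: in the first two (``small-argument'') cases $|x-y|\leq 1/|\lambda|$ together with $|x-y|\geq\delta$ forces $|\lambda|\leq 1/\delta$, so there $\Im\lambda$ is bounded and the target factor $e^{-\delta'\Im\lambda}$ costs nothing; in the last (``large-argument'') case I would split $e^{-|x-y|\Im\lambda}=e^{-\delta'\Im\lambda}\,e^{-(|x-y|-\delta')\Im\lambda}$, the residual exponent $|x-y|-\delta'\geq\delta-\delta'>0$ both swallowing any polynomial power of $|\lambda|$ once $\Im\lambda$ is large and providing genuine decay as $|x-y|\to\infty$ (needed because $\Omega$ need not be bounded).

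For \eqref{einsbound}, note first that $\calS_\lambda$ is smoothing off $\partial\calO$, so $\chi\calS_\lambda$ makes sense for every $s$. Composing with a fixed isomorphism $(1+\Delta_{\partial\calO})^{-s/2}\colon L^2(\partial\calO)\to H^s(\partial\calO)$ and using that an operator of the form $(Tf)(x)=\langle f,g_x\rangle_{L^2(\partial\calO)}$ has $\|T\|_{\mathrm{HS}}^2=\int\|g_x\|^2$, one obtains
\[
  \|\chi\calS_\lambda\|_{\mathrm{HS}(H^s\to L^2)}^2=\int_{\R^d}|\chi(x)|^2\,\|G_{\lambda,0}(x,\cdot)\|_{H^{-s}(\partial\calO)}^2\,dx .
\]
On $\supp\chi$ one has $\dist(x,\partial\calO)\geq\delta$, so $G_{\lambda,0}(x,\cdot)$ is smooth on $\partial\calO$; bounding $\|\cdot\|_{H^{-s}(\partial\calO)}$ by a fixed $C^k(\partial\calO)$-norm and inserting the derivative bounds above, the task reduces to showing the resulting integral is $O(\rho(\Im\lambda)\,e^{-2\delta'\Im\lambda})$ uniformly on $\mathfrak{D}_\epsilon$. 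I would split the integration at $\dist(x,\partial\calO)=1/|\lambda|$. On the inner part the integrand is $\lambda$-independent for $d\geq 3$ (equal to $r^{-2(d-2)}$ with $r=\dist(x,\partial\calO)$), so it contributes $\lesssim\int_\delta^{1/|\lambda|}r^{3-d}\,dr$, which is $O((\Im\lambda)^{-1})$, $O(1+|\log\Im\lambda|)$, $O(1)$ in dimensions $3$, $4$, $\geq 5$; for $d=2$ the substitution $u=|\lambda|r$ turns it into $|\lambda|^{-2}\int_0^1(1+|\log u|)^2u\,du=O((\Im\lambda)^{-2})$. In every case this is $O(\rho(\Im\lambda))$, and by the first remark the exponential factor is free there. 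On the outer part the large-argument bound applies, and using the second remark together with $\int_{\max(\delta,1/|\lambda|)}^\infty e^{-2(r-\delta')\Im\lambda}\,dr\asymp(\Im\lambda)^{-1}e^{-2(\max(\delta,1/|\lambda|)-\delta')\Im\lambda}$ the contribution is again $O(\rho(\Im\lambda)\,e^{-2\delta'\Im\lambda})$: for $|\lambda|$ bounded below because the residual exponential dominates the polynomials, and for small $|\lambda|$ because it reduces to $|\lambda|^{d-3}(\Im\lambda)^{-1}\asymp(\Im\lambda)^{d-4}\lesssim\rho(\Im\lambda)$. The extra factors $(1+|\lambda|)^{|\alpha|}$ from the derivatives improve the $r$-decay in the inner region (where $|\lambda|\lesssim r^{-1}$) and are absorbed by the residual exponential in the outer one.

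For \eqref{zweibound}, fix $\psi\in C_0^\infty(\R^d)$ with $\psi\equiv 1$ near $\partial\calO$ and split $\calS_\lambda=\psi\calS_\lambda+(1-\psi)\calS_\lambda$. Since $\supp(1-\psi)$ has positive distance to $\partial\calO$, part \eqref{einsbound} with $s=-\tfrac12$ gives $\|(1-\psi)\calS_\lambda\|_{H^{-1/2}\to L^2}\leq\|(1-\psi)\calS_\lambda\|_{\mathrm{HS}}\lesssim\rho(\Im\lambda)^{1/2}$. For the compactly supported piece I would argue by the size of $|\lambda|$. If $|\lambda|\geq 1$, then from $\calS_\lambda=G_{\lambda,0}\gamma^*$ with $\gamma^*\colon H^{-1/2}(\partial\calO)\to H^{-1}(\R^d)$ bounded one gets $\|\psi\calS_\lambda\|_{H^{-1/2}\to L^2}\lesssim\|(\Delta_0-\lambda^2)^{-1}\|_{H^{-1}(\R^d)\to L^2(\R^d)}=\sup_{x\geq 0}\tfrac{\sqrt{1+x^2}}{|x^2-\lambda^2|}$ (as in the proof of Proposition \ref{prop1}\eqref{zwei}), and the elementary bound $|x^2-\lambda^2|\geq c_\epsilon(x^2+|\lambda|^2)$ — valid on $\mathfrak{D}_\epsilon$ and used here precisely to avoid the factor $\Re\lambda$ — makes this supremum $\leq C_\epsilon(1+|\lambda|)^{-1}\leq C_\epsilon$. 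If $|\lambda|\leq 1$, I would use the splitting $\calS_\lambda=s_\lambda+m_\lambda\lambda^{d-2}\log\lambda$ of Proposition \ref{prop1}\eqref{vier}: since $\psi$ is compactly supported, $\psi s_\lambda$ and $\psi m_\lambda$ are entire families of bounded operators $H^{-1/2}(\partial\calO)\to L^2(\R^d)$, hence of uniformly bounded norm on $\{|\lambda|\leq 1\}\cap\mathfrak{D}_\epsilon$, while $|\lambda^{d-2}\log\lambda|\leq C$ for $d\geq 3$ and $|\lambda^{d-2}\log\lambda|\leq C(1+|\log\Im\lambda|)\leq C\rho(\Im\lambda)^{1/2}$ for $d=2$ (and $m_\lambda=0$ for $d$ odd). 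Adding the two ranges of $|\lambda|$ and the bound on $(1-\psi)\calS_\lambda$ gives \eqref{zweibound}.

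The routine ingredients are the Hankel asymptotics and the sector inequalities $\Im\lambda\gtrsim_\epsilon|\lambda|$ and $|x^2-\lambda^2|\gtrsim_\epsilon x^2+|\lambda|^2$. The step I expect to be the main obstacle is the low-energy bookkeeping inside \eqref{einsbound}: one has to check, dimension by dimension and uniformly over the possibly unbounded $\Omega$, that the near-boundary and far-field parts of the $x$-integral reproduce the weight $\rho(\Im\lambda)$ exactly and do not overshoot it. This is what dictates the piecewise shape of $\rho$ ($t^{d-4}$ for $d=2,3$, a logarithm for $d=4$, a constant for $d\geq 5$), what makes it necessary to retain the residual exponential $e^{-(|x-y|-\delta')\Im\lambda}$ rather than discard it, and what forces one to carry the even-dimensional logarithmic terms — in $G_{\lambda,0}$ and, through Proposition \ref{prop1}\eqref{vier}--\eqref{sechs}, in the layer operators — through the entire estimate, even though they only ever contribute a factor dominated by $\rho(\Im\lambda)^{1/2}$.
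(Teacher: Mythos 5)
Your argument is correct and is essentially the paper's: for (\ref{einsbound}) you reproduce the content of Lemma \ref{Prop:EstL2PG} and Lemma \ref{Lem:PointwiseHankel} (Hankel asymptotics, splitting at $|\lambda|\,\dist(x,\partial\calO)=1$, comparability of $\Im\lambda$ and $|\lambda|$ on $\mathfrak{D}_\epsilon$, and the dimension-by-dimension bookkeeping that recovers $\rho(\Im\lambda)$ and the residual exponential), with the Sobolev-index reduction carried out by conjugating with $(1+\Delta_{\partial\calO})^{-s/2}$ where the paper conjugates with an elliptic boundary operator $P$ of order $2k$ and then applies the Hilbert--Schmidt kernel criterion, and for (\ref{zweibound}) you use the same cutoff splitting, controlling the compactly supported piece near $\lambda=0$ via Proposition \ref{prop1}(\ref{vier}) and the far piece via part (\ref{einsbound}). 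The only point where you diverge is the regime $|\lambda|\geq 1$ in (\ref{zweibound}), where you use the sector inequality $|x^2-\lambda^2|\geq c_\epsilon\,(x^2+|\lambda|^2)$ instead of the paper's appeal to Proposition \ref{prop1}(\ref{zwei}); this is correct and uniform on all of $\mathfrak{D}_\epsilon$, whereas the quoted bound $\sqrt{1+|\lambda|^2}/|\Re(\lambda)\Im(\lambda)|$ degenerates as $\Re\lambda\to 0$ (e.g.\ near the positive imaginary axis, which lies in the sector), so your variant is, if anything, the cleaner way to cover that regime.
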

\nopagebreak
  
\begin{proof}
Let $k \in \N_0$ and $P$ be any an invertible, formally self-adjoint,  elliptic
differential operator of order $2k$ on $\del\calO$ with smooth coefficients, for example $P = \Delta^k_{\partial \calO} + 1$. Since $\partial\calO$ is compact, this implies that $P^{-1}$ maps $H^s(\partial \calO)$ to $H^{s+2k} (\partial \calO)$.
The integral kernel of $\chi \calS_\lambda P$ is given by $\chi(x) P_y G_{\lambda,0}(x,y)$, where $P_y$ denotes the differential operator $P$ acting on the $y$-variable. 
Here, $(x,y) \in \Omega \times \del\calO$ and the distance between such $x$ and $y$ is bounded below by $\delta$. By Lemma \ref{Prop:EstL2PG}, we have
\begin{align*}
 \| \chi P_y G_{\lambda,0} \|_{L^2(\Omega \times \del\calO)} &\leq C_{\delta',\epsilon}\rho(\Im \lambda)^{1/2} e^{-\delta' \Im\lambda}.
\end{align*}
In particular, we conclude from \cite{ShubinPseudos}, Proposition A.3.2 that $\chi \calS_\lambda P$ is a Hilbert Schmidt operator $L^2(\del\calO) \to L^2(\R^d)$ and its Hilbert Schmidt norm is bounded by
\begin{align*}
\| \chi \calS_\lambda P \|_{HS(L^2 \to L^2)} & \leq C_{\delta',\epsilon} \rho(\Im\lambda)^{1/2} e^{-\delta' \Im \lambda}.
\end{align*}
Since $P^{-1} : H^{-2k}(\del\calO) \to L^2(\del\calO)$ is bounded, we conclude that $\chi \calS_\lambda : H^{-2k}(\del\calO) \to L^2(\R^n)$ is Hilbert Schmidt and its corresponding norm is estimated by 
\begin{align*}
\| \chi \calS_\lambda \|_{HS(H^{-2k} \to L^2)} & \leq C_{\delta',\epsilon}\rho(\Im\lambda)^{1/2} e^{-\delta' \Im \lambda}.
\end{align*}
This yields \eqref{eqn:HSEstimateS} for all $s > -2k$. This proves part \eqref{einsbound} and it remains to show \eqref{zweibound}. By \eqref{zwei} of Prop. \ref{prop1} we only need to check this estimate near zero.
We choose a compactly supported positive smooth cutoff function $\chi_1$ which equals one for all points of distance less than $1$ from $\calO$. We show that the operators $\chi_1 \calS_\lambda$ and $(1-\chi_1) \calS_\lambda$ satisfy the desired bound for $\lambda$ near zero. It follows from  \eqref{vier} of Prop. \ref{prop1} that in case $d \geq 3$  the map 
$\chi_1 \calS_\lambda : H^{-\frac{1}{2}}(\partial \calO) \to H^{1}(\R^d)$ is bounded near zero. In case $d=2$ we have $\| \chi_1 \calS_\lambda \|_{H^{-\frac{1}{2}}(\partial \calO) \to H^{1}(\R^d)} \leq C_{\delta',\epsilon}\log(\lambda)$ near zero. Either way  $\| \chi_1 \calS_\lambda \|_{L^2(\partial \calO) \to L^2(\R^d)} \leq C_{\delta',\epsilon} \rho(\Im\lambda)^{\frac{1}{2}}$ near $\lambda=0$. The operator $(1-\chi_1) \calS_\lambda$ is Hilbert-Schmidt with Hilbert-Schmidt norm bounded by $C_{\delta',\epsilon}\rho(\Im\lambda)^{\frac{1}{2}}$ as a map from $H^{-\frac{1}{2}}(\partial \calO)$ to $L^2(\R^d)$ from part \eqref{einsbound}. Thus, $\| (1-\chi_1) \calS_\lambda \|_{H^{-\frac{1}{2}}(\partial \calO) \to L^2(\R^d)} \leq C_{\delta',\epsilon} \rho(\Im\lambda)^{\frac{1}{2}}$.
\end{proof}

The layer potential operators can be used to solve the exterior boundary value problem. Let $\Im \lambda \geq 0$ and $f \in C^\infty(\partial \calO)$.
Let  $\calB^+_\lambda: f \mapsto w$  be the solution operator of the exterior Dirichlet problem
$$
 (\Delta - \lambda^2) w =0, \quad \gamma^+ w = f,
$$
where $w \in L^2(M)$ in case $\Im \lambda >0$ and $w$ satisfies the Sommerfeld radiation condition in case $\lambda \in \R$.
Similarly, if $\lambda^2$ is not an interior Dirichlet eigenvalue, let $\calB^-_\lambda: f \mapsto w$ be the solution operator of the interior problem. Together they constitute the solution operator of the Dirichlet problem 
$\calB_\lambda = \calB^-_\lambda \oplus \calB^+_\lambda$ mapping to $L^2(\R^d) = L^2(\calO) \oplus L^2(M)$ when $\Im(\lambda)>0$.
Then $\mathcal{B}_\lambda$ is holomorphic in $\lambda$ on the upper half space. 
The operator $Q_\lambda$ is a pseudodifferential operator of order $-1$ with invertible principal symbol. It therefore is a Fredholm operator of index zero from $H^s(\partial \calO)$ to $H^{s+1}(\partial \calO)$. It is invertible away from the Dirichlet eigenvalues of the interior problem. Hence, for all $\Im \lambda > 0$ we have
$$
   \calB_\lambda =  \calS_\lambda \left( Q_\lambda \right)^{-1}.
$$
In case the dimension $d$ is odd we conclude from holomorphic Fredholm theory that $\left( Q_\lambda \right)^{-1}$ is a meromorphic family of Fredholm operators of finite type on the complex plane. This provides a meromorphic continuation of $\calB_\lambda$ to the entire complex plane in this case.
The poles of $\calB_\lambda$ are of two kinds, those of $\calB^-_\lambda$ and those of $\calB^+_\lambda$. The former correspond to Dirichlet eigenvalues of $\calO$, the latter to scattering resonances of $M$ (see e.g. \cite{Taylorbook2}, chapter 9, Section 7).

In case the dimension $d$ is even we may not apply meromorphic Fredholm theory directly to $Q_\lambda$ at the point $\lambda=0$ since the family fails to be holomorphic at that point due to the presence of $\log$-terms. One can however apply the theory of Hahn holomorphic functions instead to arrive at the following conclusion. A summary of Hahn holomorphic functions and their properties can be found in Appendix \ref{hahnapp}. 
\begin{lemma} \label{Lemma23}
 For any $0<\epsilon< \pi/2$ and $s \in \R$ the operator family $Q_\lambda^{-1}$ is holomorphic in the sector $\mathfrak{D}_\epsilon$ and continuous on its closure as a family of bounded operators
 $H^s(\partial \calO) \to H^{s-1}(\partial \calO)$. In particular the family is bounded near zero in the sector. If $d$ is odd then
 $Q_\lambda^{-1}: H^s(\partial \calO) \to H^{s-1}(\partial \calO)$ is a meromorphic family of operators of finite type in the complex plane. If $d$ is even then $Q_\lambda^{-1}: H^s(\partial \calO) \to H^{s-1}(\partial \calO)$ is Hahn-meromorphic of finite type on any sector of the logarithmic cover of the complex plane and is Hahn-holomorphic at zero.
\end{lemma}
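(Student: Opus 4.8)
The plan is to combine the pseudodifferential structure of $Q_\lambda$ from Proposition \ref{prop1} with (Hahn-)analytic Fredholm theory, and then to analyse the single point $\lambda=0$ by hand. First I would use Proposition \ref{prop1}\eqref{fuenf} to write $Q_\lambda=q_\lambda+r_\lambda\,\lambda^{d-2}\log\lambda$, where $q_\lambda$ is an entire family of pseudodifferential operators of order $-1$ with invertible principal symbol, hence a Fredholm family of index zero $H^{s-1}(\partial\calO)\to H^{s}(\partial\calO)$, and $r_\lambda$ is an entire family of smoothing, in particular compact, operators that vanishes identically when $d$ is odd. Since the interior Dirichlet eigenvalues of $\calO$ are real and positive, $\lambda^{2}$ is never such an eigenvalue when $\Im\lambda>0$, so $Q_\lambda$ is invertible on the whole open upper half plane, and in particular at every point of $\overline{\mathfrak D_\epsilon}$ except possibly the origin.

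For $d$ odd, $Q_\lambda=q_\lambda$ is an entire Fredholm family of index zero that is invertible somewhere, so the analytic Fredholm theorem shows that $Q_\lambda^{-1}\colon H^{s}\to H^{s-1}$ is meromorphic of finite type on $\C$, with poles forming a discrete set disjoint from the upper half plane. To rule out a pole at $\lambda=0$ I would invoke the fact that $Q_0$ is the single-layer operator of the Newtonian potential, which for $d\geq 3$ is invertible by the classical uniqueness argument (if $Q_0\phi=0$, the Newtonian potential of $\phi$ is harmonic on $\calO$ and on $M$, has vanishing boundary trace, and decays at infinity, so uniqueness for the interior and exterior Dirichlet problems forces it to vanish, whence the jump relation gives $\phi=0$). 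This settles the odd-dimensional case. For $d$ even, the same reasoning applies verbatim on $\mathfrak D_\epsilon$, where $\log\lambda$ is holomorphic; and near $0$ the family $Q_\lambda$ is Hahn-holomorphic --- the only exponents occurring are those of $1$ and of $\lambda^{d-2}\log\lambda$ --- and Fredholm of index zero, so the Hahn-analytic Fredholm theorem of Appendix \ref{hahnapp} gives that $Q_\lambda^{-1}$ is Hahn-meromorphic of finite type on any sector of the logarithmic cover. It then remains to rule out a Hahn-pole at $\lambda=0$.

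The step I expect to be the real obstacle is the origin in dimension two. For $d$ even with $d\geq 4$ the correction term is $O(\lambda^{d-2}\log\lambda)=o(1)$, so $Q_0=q_0$ is again the invertible Newtonian single-layer operator and a Neumann series finishes the argument. When $d=2$, however, $r_\lambda\log\lambda$ is unbounded as $\lambda\to0$, and the static single-layer operator with kernel $-\tfrac1{2\pi}\log|x-y|$ need not be invertible (the logarithmic-capacity obstruction). My plan here rests on two points. First, the expansion of the Hankel function of order zero shows that $q_0$ has integral kernel $-\tfrac1{2\pi}\log|x-y|+C$ with a constant $C$ of nonzero imaginary part; since a kernel of $q_0$ would force $C$ to be real (equal to a Robin constant), $q_0$ is invertible for every obstacle, and moreover $q_0\mu_{\mathrm{eq}}$ is a nonzero multiple of the constant function $\mathbf 1$, where $\mu_{\mathrm{eq}}$ is the logarithmic equilibrium density of $\partial\calO$ of total mass one, so $q_0^{-1}\mathbf 1$ is a nonzero multiple of $\mu_{\mathrm{eq}}$ and $\langle q_0^{-1}\mathbf 1,\mathbf 1\rangle\neq 0$. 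Second, $r_0=\gamma F_0\gamma^{*}$ is the rank-one operator $f\mapsto\tfrac1{2\rmi}\bigl(\int_{\partial\calO}f\,\der\sigma\bigr)\mathbf 1$, so $K_\lambda:=q_\lambda^{-1}r_\lambda$ is a holomorphic family of smoothing operators near $0$ with $K_0$ of rank one and trace $t_0=\tfrac1{2\rmi}\langle q_0^{-1}\mathbf 1,\mathbf 1\rangle\neq 0$. Writing $Q_\lambda=q_\lambda\,(1+\log\lambda\,K_\lambda)$ and $\log\lambda\,K_\lambda=\log\lambda\,K_0+O(\lambda^{2}\log\lambda)$, and using the rank-one identity
\[
 \bigl(1+\log\lambda\,K_0\bigr)^{-1}=1-\frac{\log\lambda}{\,1+t_0\log\lambda\,}\,K_0 ,
\]
whose right-hand side is bounded and converges as $\lambda\to0$ in the sector, uniformly in $\arg\lambda$, together with a Neumann series for the $O(\lambda^{2}\log\lambda)$ correction, I would conclude that $(1+\log\lambda\,K_\lambda)^{-1}$, and hence $Q_\lambda^{-1}$, extends to a bounded operator at $\lambda=0$, is Hahn-holomorphic there, and is continuous on $\overline{\mathfrak D_\epsilon}$.

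Finally, the statement for all $s\in\R$ I would deduce from the case $s=0$ by ellipticity: $Q_\lambda$ is an elliptic pseudodifferential operator of order $-1$ up to a smoothing, (Hahn-)holomorphic remainder, so wherever it is invertible its inverse is a pseudodifferential operator of order $+1$ plus a smoothing operator, bounded $H^{s}\to H^{s-1}$ for every $s$ with the same holomorphic, respectively Hahn-holomorphic, dependence on $\lambda$. In short, the odd-dimensional and $d\geq 4$ cases reduce to the classical invertibility of the Newtonian single-layer operator, the even-dimensional bookkeeping with Hahn series is routine given Appendix \ref{hahnapp}, and the only genuinely delicate computation is the rank-one analysis of $Q_\lambda$ near $\lambda=0$ when $d=2$.
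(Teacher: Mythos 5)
Your proposal is correct in substance, but at the crucial point $\lambda=0$ it takes a genuinely different route from the paper. The common skeleton is the same: $Q_\lambda$ is an elliptic pseudodifferential family of order $-1$, hence Fredholm of index zero, injective (and so invertible) for $\Im\lambda>0$ by the standard layer-potential uniqueness argument, and analytic (resp.\ Hahn-analytic) Fredholm theory then gives a meromorphic, resp.\ Hahn-meromorphic, inverse of finite type. The divergence is in how the absence of a singularity at $\lambda=0$ is established. The paper never inverts the regular part $q_0$: it lets $A$ denote the most singular coefficient in the Hahn expansion of $Q_\lambda^{-1}$, uses $\lim_{\lambda\to0}Q_\lambda A=0$ together with the structure of the singular terms ($r_0=\gamma F_0\gamma^*$ with constant kernel) to show that for $u=Aw$ one has $r_0u=0$ and $m_0u=0$, so that $\calS_\lambda u$ is regular at zero and solves the interior problem with trivial data, forcing $u=0$ and hence $A=0$. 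You instead prove boundedness at $\lambda=0$ constructively: for $d\geq 3$ (and even $d\geq4$) by the classical invertibility of the Newtonian single layer, and for $d=2$ by showing the regular part $q_0$ is invertible because its kernel is $-\tfrac1{2\pi}\log|x-y|+C$ with $\Im C\neq0$ (a kernel element would force $C$ real via the positivity of the logarithmic energy on mean-zero densities), followed by a Sherman--Morrison-type rank-one inversion of $1+\log\lambda\,K_\lambda$ using $t_0=\mathrm{tr}\,K_0\neq0$. Your route is more computational and even yields an explicit limit of $Q_\lambda^{-1}$ at zero, at the price of importing classical potential-theoretic facts (equilibrium measure, energy positivity) that the paper's softer coefficient-killing argument avoids; conversely the paper's argument gives less explicit information but needs no invertibility of $q_0$. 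Two small points to tighten if you write this up: the injectivity of $Q_\lambda$ for $\Im\lambda>0$ needs the exterior half of the uniqueness argument as well (an $L^2$ solution of $(\Delta-\lambda^2)w=0$ in $M$ with zero boundary data vanishes), not only the absence of interior Dirichlet eigenvalues; and in the $d=2$ step the precise value of the constant kernel of $r_0$ is normalisation-dependent, but your argument only uses that it is a nonzero multiple of $\mathbf{1}\otimes\mathbf{1}$ and that $\langle q_0^{-1}\mathbf{1},\mathbf{1}\rangle\neq0$, which your equilibrium-measure computation does establish.
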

\begin{proof}
 First note that for $\Im(\lambda)>0$ the operator $Q_\lambda$ is a holomorphic family of elliptic pseudodifferential operators of order $-1$ whose principal symbol is independent of $\lambda$. It is therefore a holomorphic family of Fredholm operators of index zero
  from $H^s(\partial \calO) \to H^{s+1}(\partial \calO) $. Now note that if $u \in \ker Q_\lambda$, then $u$ is smooth and $\calS_\lambda u$ is a Laplace eigenfunction with eigenvalue 
  $\lambda^2$ that vanishes at the boundary and has Neumann data $u$. If $\lambda$ is in the upper half space then $\lambda^2$ is not a Dirichlet eigenvalue. Therefore $u$ cannot be a Dirichlet eigenfunction and we conclude that $u=0$. By analytic Fredholm theory $Q_\lambda$ is invertible in the upper half plane as a map $H^{s}(\partial \calO)$ to $H^{s+1}(\partial \calO)$ and the inverse $Q_\lambda^{-1}$ is holomorphic.  In odd dimensions this argument also applies to a neighborhood of zero. We will explain in the rest of the proof how Fredholm theory for Hahn meromorphic operator valued functions can be used to control the general case.
  
First note by Prop. \ref{prop1} that $\calS_\lambda$ and $Q_\lambda$ are analytic in a larger sector
  $\mathfrak{D}_{\frac{\epsilon}{2}}$ and Hahn-meromorphic near zero. For $d \geq 3$ these families are both Hahn-holomorphic. In dimension two
  $\calS_\lambda$ and $Q_\lambda$ are of the form $\calS_\lambda = s_\lambda + m_{\lambda}\log{\lambda}$ and $Q_\lambda = q_\lambda + r_{\lambda} \log{\lambda}$ where $r_0$ has constant integral kernel, $\gamma F_{0}$ (see \eqref{fuenf} of Prop. \ref{prop1}).
   If follows from Fredholm theory that $Q_\lambda^{-1}$ is Hahn-meromorphic of finite type on this sector as an operator family $H^{s+1}(\partial \calO) \to H^{s}(\partial \calO) $. We will show that $Q_\lambda^{-1}$ is Hahn-holomorphic at zero. Let $A$ be the most singular term in the Hahn-expansion of $(Q_{\lambda})^{-1}$, i.e. $A=\lim_{\lambda\to 0} r(\lambda) (Q_{\lambda})^{-1}$ for $r(\lambda)=\lambda^{\alpha} (-\log(\lambda))^{-\beta}$ such that $\lim_{\lambda\to0} r(\lambda)=0$.
  Then $A$ has finite rank and $\lim_{\lambda\to 0} Q_\lambda A =0$. Let $u = A w$. If $d \geq 3$ this shows $Q_0 u = 0$ and therefore $S_0 u$
  solves the Dirichlet problem with zero eigenvalue and Neumann data $u$. Since $0$ is not an interior eigenvalue the Neumann data must vanish and therefore $u=0$. In case $d=2$ the argument is slightly more subtle since $Q_\lambda$ is not regular at $\lambda=0$.
 In this case $Q_\lambda u$ is regular at zero and therefore Hahn-holomorphic. Comparing expansion coefficients this implies $r_{0}u =0$ and thus 
 $m_0 u=0$. Consequently, $\calS_\lambda u= s_\lambda u$ is Hahn-holomorphic near zero and $\lim_{\lambda \to 0} \calS_\lambda u$ exists and solves the interior Dirichlet problem with Neumann data $u$. By the same argument as before this implies $u=0$. Hence in all dimensions $A w=0$ for all $w \in H^{s+1}(\partial \calO)$ and thus $A=0$.
This means $Q_\lambda^{-1}$ has no singular terms in its Hahn expansion and is therefore Hahn-holomorphic near zero and hence bounded.
 \end{proof}

Recall that a (Hahn-) meromorphic family is said to be of finite type if the span of the range of the singular expansion coefficients is finite dimensional at every point.

The exterior and interior Dirichlet to Neumann maps $\calN_\lambda^\pm: C^\infty(\partial \calO) \to C^\infty(\partial \calO)$ are defined as 
$$
 \calN^\pm_\lambda = \partial_\nu \calB^\pm_\lambda
$$
and related to the above operators by
$$
 {\calN^\pm_\lambda} =  Q_\lambda^{-1}( \frac{1}{2} \Id \mp K_\lambda )
$$
(see \cite{Taylorbook2}, Ch.~7 (11.35) and Ch.~9 (7.62)) and therefore 
\begin{align} \label{eqn:NNQ}
 \calN^+_\lambda + \calN^-_\lambda = Q_\lambda^{-1}. 
\end{align}

Since the operator family $K_\lambda: H^s(\partial \calO) \to H^s(\partial \calO)$ is continuous near $\lambda=0$ we immediately obtain the following.

\begin{corollary}
 For any $0<\epsilon< \pi/2$ and $s \in \R$ the operator family $\calN^\pm_\lambda$ is holomorphic in the sector 
 $\mathfrak{D}_\epsilon$
 and continuous on its closure as a family of bounded operators
 $H^s(\partial \calO) \to H^{s-1}(\partial \calO)$. In particular the family is bounded near zero in the sector.
\end{corollary}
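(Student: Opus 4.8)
The plan is to read the statement off directly from the factorisation ${\calN^\pm_\lambda} = Q_\lambda^{-1}(\tfrac12 \Id \mp K_\lambda)$ recorded just above, together with the mapping properties already established. The one input about $K_\lambda$ I would use is part \eqref{sechs} of Proposition \ref{prop1}: $K_\lambda = k_\lambda + \tilde r_\lambda\,\lambda^{d-1}\log\lambda$, with $k_\lambda$ an entire family of pseudodifferential operators of order $0$ and $\tilde r_\lambda$ an entire family of smoothing operators (and $\tilde r_\lambda = 0$ if $d$ is odd). Hence, for each fixed $s\in\R$, the only non-entire factor occurring in $K_\lambda$, namely $\lambda^{d-1}\log\lambda$, is holomorphic on the sector $\mathfrak{D}_\epsilon$ — which, since $\epsilon>0$, stays away from the branch cut — and extends continuously to $\overline{\mathfrak{D}_\epsilon}$ with value $0$ at $\lambda=0$. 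Therefore $K_\lambda$, and with it $\tfrac12\Id \mp K_\lambda$, is a family of bounded operators $H^s(\partial\calO)\to H^s(\partial\calO)$ that is holomorphic on $\mathfrak{D}_\epsilon$, norm-continuous on its closure, and bounded on a neighbourhood of $0$ in the sector.

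By Lemma \ref{Lemma23}, $Q_\lambda^{-1}\colon H^s(\partial\calO)\to H^{s-1}(\partial\calO)$ is holomorphic on $\mathfrak{D}_\epsilon$, continuous on $\overline{\mathfrak{D}_\epsilon}$, and bounded near $0$ in the sector. Composing,
\[
 \calN^\pm_\lambda \;=\; Q_\lambda^{-1}\circ\bigl(\tfrac12\Id \mp K_\lambda\bigr)\colon\ H^s(\partial\calO)\ \longrightarrow\ H^s(\partial\calO)\ \longrightarrow\ H^{s-1}(\partial\calO),
\]
and I would conclude by the standard facts that a product $A_\lambda B_\lambda$ of operator-valued holomorphic families is again holomorphic, that norm-continuity is likewise preserved under such products, and that $\|A_\lambda B_\lambda\|\le\|A_\lambda\|\,\|B_\lambda\|$. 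Thus $\calN^\pm_\lambda$ is holomorphic on $\mathfrak{D}_\epsilon$, continuous on its closure as a family of bounded operators $H^s(\partial\calO)\to H^{s-1}(\partial\calO)$, and bounded near zero in the sector, which is exactly the assertion; the last sentence of the corollary is just the "bounded near zero" case.

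There is essentially no obstacle: all the substantive work — the analytic Fredholm, respectively Hahn-meromorphic, continuation of $Q_\lambda^{-1}$ past $\lambda=0$ carried out in Lemma \ref{Lemma23}, and the $\log$-term structure of $K_\lambda$ from Proposition \ref{prop1} — is already done, so the corollary reduces to a one-line composition argument. The only point worth stating carefully is that boundedness of both factors near $0$ in $\mathfrak{D}_\epsilon$ transfers to the product via submultiplicativity of the operator norm, which is immediate.
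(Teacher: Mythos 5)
Your proof is correct and follows exactly the paper's route: the paper also obtains the corollary immediately from the factorisation $\calN^\pm_\lambda = Q_\lambda^{-1}(\tfrac12\Id \mp K_\lambda)$, the structure of $K_\lambda$ from Proposition \ref{prop1}\,(\ref{sechs}) (so that $K_\lambda$ is holomorphic in the sector and continuous near $\lambda=0$), and the properties of $Q_\lambda^{-1}$ established in Lemma \ref{Lemma23}. Nothing is missing; the composition argument you spell out is precisely the paper's one-line deduction.
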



\begin{lemma} \label{Lem:NEstimate}
For $\lambda \in \C$ satisfying $\Re(\lambda^2) < 0$, there exists a constant $C > 0$ s.t.
\begin{align*}
\| \calN^\pm_\lambda \|_{H^{1/2}(\partial \calO) \to H^{-1/2}(\partial \calO)} \leq C \frac{(1+|\lambda^2|)^2}{|\Re(\lambda^2)|}. 
\end{align*}
The same estimate holds when $\Re(\lambda^2)$ is replaced by $\Im(\lambda^2)$ under the assumption that $\Im(\lambda^2) < 0$.
\end{lemma}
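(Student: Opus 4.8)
The plan is to derive the bound from two integration--by--parts identities for the Dirichlet solution $w:=\calB^\pm_\lambda f$ on $\calO$ (interior, for $\calN^-_\lambda$), respectively on $M$ (exterior, for $\calN^+_\lambda$), combined in a bootstrap. It suffices to prove the estimate for $f\in C^\infty(\partial\calO)$, the general case following by density since the bound is in terms of $\|f\|_{H^{1/2}(\partial\calO)}$; I would also take $\Im\lambda>0$ (the case relevant in the sectors used later; if needed, $\Im\lambda<0$ is handled via the conjugation symmetry $\calN^\pm_{\bar\lambda}=\overline{\calN^\pm_\lambda}$). The hypothesis $\Re(\lambda^2)<0$, resp.\ $\Im(\lambda^2)<0$, forces $\lambda^2\notin[0,\infty)$, resp.\ $\lambda^2\notin\R$, so $\lambda^2$ is not a Dirichlet eigenvalue of $\calO$ and $\calB^-_\lambda$ is defined, while $\calB^+_\lambda$ is always defined for $\Im\lambda>0$ and the $L^2(M)$--solution $w$ decays exponentially; in all cases $w$ is smooth up to the boundary, $\Delta w=\lambda^2 w$, and $w\in H^1$.

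The identity is Green's formula: for a test function $v\in H^1$ (taken compactly supported in the exterior case) with $\gamma^\pm v=g$,
\[
 \langle \calN^\pm_\lambda f,\,g\rangle \;=\; \pm\Bigl(\int \nabla w\cdot\nabla\bar v\,\der x \;-\; \lambda^2\int w\,\bar v\,\der x\Bigr),
\]
the integrals being over $\calO$, respectively $M$. Choosing $v$ to be a bounded extension of $g$, $\|v\|_{H^1}\le C\|g\|_{H^{1/2}(\partial\calO)}$, and taking the supremum over $\|g\|_{H^{1/2}(\partial\calO)}\le1$ gives the a priori trace bound
\[
 \|\calN^\pm_\lambda f\|_{H^{-1/2}(\partial\calO)}\;\le\; C\bigl(\|\nabla w\|_{L^2}+|\lambda^2|\,\|w\|_{L^2}\bigr),
\]
while choosing $v=w$ (legitimate since $w\in H^1$, the boundary term at infinity vanishing in the exterior case) gives the energy identity
\[
 \langle \calN^\pm_\lambda f,\,f\rangle \;=\; \pm\bigl(\|\nabla w\|_{L^2}^2-\lambda^2\,\|w\|_{L^2}^2\bigr).
\]

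In the case $\Re(\lambda^2)<0$, taking real parts yields $|\Re\langle\calN^\pm_\lambda f,f\rangle|=\|\nabla w\|_{L^2}^2+|\Re(\lambda^2)|\,\|w\|_{L^2}^2$, which is $\le\|\calN^\pm_\lambda f\|_{H^{-1/2}}\|f\|_{H^{1/2}}$. Inserting the trace bound and using Young's inequality to absorb $C\|\nabla w\|_{L^2}\|f\|_{H^{1/2}}$ into $\tfrac{1}{2}\|\nabla w\|_{L^2}^2$ and $C|\lambda^2|\,\|w\|_{L^2}\|f\|_{H^{1/2}}$ into $\tfrac{1}{2}|\Re(\lambda^2)|\,\|w\|_{L^2}^2+C'\tfrac{|\lambda^2|^2}{|\Re(\lambda^2)|}\|f\|_{H^{1/2}}^2$, and then using $|\Re(\lambda^2)|\le|\lambda^2|$ to simplify, I obtain
\[
 \|\nabla w\|_{L^2}^2+|\Re(\lambda^2)|\,\|w\|_{L^2}^2\;\le\;C\,\frac{(1+|\lambda^2|)^2}{|\Re(\lambda^2)|}\,\|f\|_{H^{1/2}}^2 .
\]
Feeding this back into the trace bound and once more using $|\Re(\lambda^2)|\le|\lambda^2|$ gives the claimed estimate. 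In the case $\Im(\lambda^2)<0$ the argument is the same, except that the imaginary part of the energy identity controls only $\|w\|_{L^2}^2$; one recovers control of $\|\nabla w\|_{L^2}^2$ from the real part (which is bounded by $\|\calN^\pm_\lambda f\|_{H^{-1/2}}\|f\|_{H^{1/2}}+|\Re(\lambda^2)|\,\|w\|_{L^2}^2$, and $|\Re(\lambda^2)|\le|\lambda^2|$) before running the same Young--inequality bootstrap with $|\Im(\lambda^2)|$ in place of $|\Re(\lambda^2)|$.

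I expect the main obstacle to be the honest treatment of the exterior problem on the unbounded domain $M$: one must check that $w=\calB^+_\lambda f$ genuinely lies in $H^1(M)$ with $\nabla w\in L^2(M)$ — so that Green's formula holds with no contribution from infinity and $v=w$ is admissible — which relies on the exponential decay of $L^2$--solutions of the Helmholtz equation when $\Im\lambda>0$, and that the test functions $v$ can be chosen compactly supported with $\|v\|_{H^1(M)}\le C\|g\|_{H^{1/2}(\partial\calO)}$, so that the trace bound picks up no commutator terms. The remaining bookkeeping is to fix the normal orientations consistently so that $|\Re\langle\calN^\pm_\lambda f,f\rangle|=\|\nabla w\|_{L^2}^2+|\Re(\lambda^2)|\,\|w\|_{L^2}^2$ holds in both the interior and exterior cases.
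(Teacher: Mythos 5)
Your proposal is correct and reaches the stated bound with the right powers; it shares the paper's starting point but implements the key intermediate estimate differently. Both arguments begin with Green's first identity for $w=\calB^\pm_\lambda f$, using $\Re(\lambda^2)<0$ (resp.\ $\Im(\lambda^2)<0$) to extract coercive control from the boundary pairing, and both finish with an absorption (Young's inequality) argument; both also tacitly need $w\in H^1(M)$ with no contribution from infinity in the exterior case, which you rightly flag and which follows from exponential decay when $\Im\lambda>0$. The difference is in how the Neumann datum is bounded in $H^{-1/2}(\partial\calO)$. The paper localises with a cutoff $\chi$ near $\partial\calO$, rewrites $(\Delta+1)(\chi u)=\chi(1+\lambda^2)u+[\Delta,\chi]u$, and invokes elliptic boundary regularity for the auxiliary operator $\Delta+1$ (McLean, Lemma 4.3 and Theorem 4.10(i)) to get $\|\partial_\nu^+u\|_{H^{-1/2}}\leq C\bigl(|1+\lambda^2|\,\|u\|_{L^2}+\|u\|_{L^2}+\|f\|_{H^{1/2}}\bigr)$, so only the $L^2$ norm of $u$ is needed and the gradient term in Green's identity can simply be dropped. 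You instead use the variational characterisation of the Neumann trace, pairing against bounded $H^{1/2}\to H^1$ extensions, which gives $\|\calN^\pm_\lambda f\|_{H^{-1/2}}\leq C\bigl(\|\nabla w\|_{L^2}+|\lambda^2|\,\|w\|_{L^2}\bigr)$; this avoids citing elliptic regularity and the cutoff construction, at the price of keeping the global gradient term in the energy identity (and, in the $\Im(\lambda^2)<0$ case, recovering $\|\nabla w\|_{L^2}$ from the real part before the bootstrap, which you do correctly). Your route is somewhat more self-contained, the paper's has the advantage that the Neumann-trace bound is purely local and needs only $\|u\|_{L^2}$; both yield the same $(1+|\lambda^2|)^2/|\Re(\lambda^2)|$ bound.
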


\begin{proof}
Let $u \in C^\infty(M,\C) \cap H^1(M,\C)$ be a solution of $(\Delta - \lambda^2)u = 0$. Since $\nu^+$ is the outward normal for $M$, the real part of Green's first identity reads
\begin{align*}
 \int_M |\nabla u|^2 - \Re(\lambda^2) |u|^2 dx &= \Re \, \int_{\del M} \bar{u} \del^+_\nu u dS. 
\end{align*}
Using the dual pairing between $H^{1/2}(\del\calO)$ and $H^{-1/2}(\del\calO)$ to estimate the right hand side and dropping the first derivatives, we obtain 
\begin{align} \label{Eqn:L2RealEst}
 \|u\|^2_{L^2(M)} &\leq (-\Re(\lambda^2))^{-1}  \biggl| \int_{\del M} \bar{u} \del_\nu^+ u dS \biggr| \leq (-\Re(\lambda^2))^{-1} \|\gamma^+ u\|_{H^{1/2}} \|\del_\nu^+ u\|_{H^{-1/2}}.
\end{align}
Choose $R > 0$ so large that $\bar\calO$ is contained in $B_R(0)$. Choose a cutoff function $\chi \in C^\infty(M)$ such that $\chi(M) \subset [0,1]$, $\chi|_{B_R(0)}=1$ and $\supp(\chi) \subset B_{2R}(0)$. Setting $\tilde{u} := \chi \cdot u$, this function satisfies the equation $(\Delta + 1)\tilde u = \chi (1 + \lambda^2)u + [\Delta,\chi]u$, where $[\Delta,\chi]$ is a differential operator of order one with compact support. We now apply elliptic estimates on bounded domains from \cite[Lemma 4.3 and Theorem 4.10 (i)]{McLean2000}. We are using case (i) of theorem 4.10 since the homogeneous problem 
$(\Delta +1)u = 0$ with Dirichlet boundary conditions has only the trivial solution, as $-1$ is not in the spectrum of $\Delta$ and in particular not an eigenvalue. These provide the first two steps of the following estimate:
\begin{align}
\|\del_\nu^+ \tilde u\|_{H^{-1/2}(\del M)} 
&\leq C_1 \|\tilde u\|_{H^1(M)}  \leq C_2 (\| (\Delta + 1) \tilde u \|_{H^{-1}(M)} + \| \gamma^+ \tilde u \|_{H^{1/2}(\del \calO)}) \notag \\
&\leq C_2( \|\chi (1 + \lambda^2)u \|_{H^{-1}(M)} + \|[\Delta,\chi] u\|_{H^{-1}(M)} + \| \gamma^+  \tilde u \|_{H^{1/2}(\del \calO)}) \notag \\
&\leq C_3( |1 + \lambda^2| \cdot \|\chi u \|_{L^2(M)} + \| u\|_{L^2(M)} + \| \gamma^+  \tilde u \|_{H^{1/2}(\del \calO)}) \label{eqn:EllEstimate},
\end{align}
for some positive constants $C_1,C_2, C_3$.
Since $\del_\nu^+ (\tilde u)|_{\del M} = \del_\nu^+ (u)|_{\del M}$ and $\gamma^+ \tilde u = \gamma^+ u$, combining \eqref{Eqn:L2RealEst} and \eqref{eqn:EllEstimate} implies
\begin{align*}
\|\del_\nu^+ u\|_{H^{-\frac{1}{2}}(\del M)} 
&\leq C_3 \frac{|1+\lambda^2| + 1}{(-\Re(\lambda^2))^{\frac{1}{2}}} \|\gamma^+ u\|^{\frac{1}{2}}_{H^{\frac{1}{2}}(\del \calO)} \|\del_\nu^+ u\|^{\frac{1}{2}}_{H^{-\frac{1}{2}}(\del \calO)} + C_3 \| \gamma^+ u \|_{H^{\frac{1}{2}}(\del \calO)} \\
&\leq C_3^2 \frac{(|1+\lambda^2| + 1)^2}{(-2)\Re(\lambda^2)} \|\gamma^+ u\|_{H^{\frac{1}{2}}(\del \calO)} + \frac{1}{2} \|\del_\nu^+ u\|_{H^{-\frac{1}{2}}(\del \calO)} + C_3 \| \gamma^+ u \|_{H^{\frac{1}{2}}(\del \calO)}.
\end{align*}
Rearranging this gives
$$\| \calN^+_\lambda \|_{H^{1/2}(\partial \calO) \to H^{-1/2}(\partial \calO)} \leq C_3^2 \frac{(|1+\lambda^2| + 1)^2}{-\Re(\lambda^2)} +2 C_3,$$
yielding the estimate as claimed.
Assuming $\Im(\lambda^2) < 0$, \eqref{Eqn:L2RealEst} holds with $\Re(\lambda^2)$ replaced by $\Im(\lambda^2)$ and we arrive at a corresponding estimate for $\| \calN^+_\lambda \|$. The estimate for $\| \calN^-_\lambda \|$ is proved in the same way.
\end{proof}

\begin{rem}
The estimate \eqref{Eqn:L2RealEst} can be improved in the following way. Choosing $\epsilon \in (0,1)$ and $\Re(\lambda^2) < -\epsilon$, then we may use Green's identity to get an estimate for $\|u\|_{H^1(M)}$:
\begin{align*}
 \epsilon \|u\|_{H^1(M)}^2 &\leq \int |\nabla u|^2 - \Re(\lambda^2) |u|^2 dx 
 \leq \biggl| \int_{\del M} \bar{u} \del_\nu^+ u dS \biggr| \leq \|\gamma^+ u\|_{H^{1/2}} \|\del_\nu^+ u\|_{H^{-1/2}}.
\end{align*}
\end{rem}

\bigskip

\noindent For $0 < \epsilon< \frac{\pi}{2}$, recall the sector $\mathfrak{D}_\epsilon$ in the upper half plane is given by  
\begin{align*}
 \mathfrak{D}_\epsilon := \{ z \in \C \mid \epsilon < \arg(z) < \pi - \epsilon \}.
\end{align*}
\noindent Note that for $\lambda \in \mathfrak{D}_\epsilon$, we have the estimate
\begin{align*}
 \Im(\lambda) = |\Im(\lambda)| \leq |\lambda| \leq C_\epsilon \Im(\lambda),
\end{align*}
where $C_\epsilon := \sin(\epsilon)^{-1}$ is independent of $\lambda \in \mathfrak{D}_\epsilon$.\\

The lemma above yields the following uniform estimate:
\begin{corollary} \label{estimated2n}
For any $\epsilon > 0$, there exists $C = C(\epsilon) > 0$ such that
\begin{align*}
\| \calN^\pm_\lambda \|_{H^{1/2}(\partial \calO) \to H^{-1/2}(\partial \calO)} \leq C (1+|\lambda|^2)
\end{align*}
for all $\lambda$ in the sector $\mathfrak{D}_\epsilon$. 
\end{corollary}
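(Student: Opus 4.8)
The plan is to derive Corollary~\ref{estimated2n} from Lemma~\ref{Lem:NEstimate} by splitting $\mathfrak{D}_\epsilon$ according to the size of $|\lambda|$ and the argument of $\lambda^{2}$. We may assume $0<\epsilon<\tfrac{\pi}{2}$, since otherwise $\mathfrak{D}_\epsilon=\emptyset$. To handle bounded $\lambda$ I would invoke the corollary stated just before Lemma~\ref{Lem:NEstimate}: it asserts that $\calN^{\pm}_\lambda$ extends to a continuous family of bounded operators $H^{1/2}(\partial\calO)\to H^{-1/2}(\partial\calO)$ on the closed sector $\overline{\mathfrak{D}_\epsilon}$. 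Since the set $\{\lambda\in\overline{\mathfrak{D}_\epsilon}:|\lambda|\le 1\}$ is compact, this yields a uniform bound $\|\calN^{\pm}_\lambda\|\le C_1(\epsilon)$ there, hence in particular $\|\calN^{\pm}_\lambda\|\le C_1(\epsilon)(1+|\lambda|^{2})$. It therefore remains to treat $\lambda\in\mathfrak{D}_\epsilon$ with $|\lambda|>1$.

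For such $\lambda$, write $\lambda^{2}=|\lambda|^{2}e^{\rmi\theta}$ with $\theta=2\arg(\lambda)\in(2\epsilon,2\pi-2\epsilon)$; since $\Im\lambda>0$ we have $\lambda^{2}\notin[0,\infty)$, so $\lambda^{2}$ is not an interior Dirichlet eigenvalue and $\calN^{-}_\lambda$ is well defined. I would then distinguish two regimes. If $\theta\in[\tfrac{3\pi}{4},\tfrac{5\pi}{4}]$, then $\cos\theta\le-\tfrac{1}{\sqrt2}$, so $\Re(\lambda^{2})\le-\tfrac{1}{\sqrt2}|\lambda|^{2}<0$ and $|\Re(\lambda^{2})|\ge\tfrac{1}{\sqrt2}|\lambda|^{2}$; with $C$ the constant of Lemma~\ref{Lem:NEstimate} this gives $\|\calN^{\pm}_\lambda\|\le C(1+|\lambda^{2}|)^{2}/|\Re(\lambda^{2})|\le\sqrt2\,C\,(1+|\lambda|^{2})^{2}/|\lambda|^{2}\le 4\sqrt2\,C\,(1+|\lambda|^{2})$, the last step using $|\lambda|>1$. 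If instead $\theta\notin[\tfrac{3\pi}{4},\tfrac{5\pi}{4}]$, then $\theta$ lies in $(2\epsilon,\tfrac{3\pi}{4})\cup(\tfrac{5\pi}{4},2\pi-2\epsilon)$, and an elementary trigonometric estimate gives $|\sin\theta|\ge c_\epsilon:=\min\{\sin(2\epsilon),\tfrac{1}{\sqrt2}\}>0$, so $|\Im(\lambda^{2})|=|\lambda|^{2}|\sin\theta|\ge c_\epsilon|\lambda|^{2}$; the $\Im(\lambda^{2})$-version of Lemma~\ref{Lem:NEstimate} then bounds $\|\calN^{\pm}_\lambda\|$ by $C(1+|\lambda^{2}|)^{2}/|\Im(\lambda^{2})|\le (4C/c_\epsilon)(1+|\lambda|^{2})$. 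Combining these two estimates with the bounded-$\lambda$ one gives the corollary.

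The hard part, indeed the only point that is not completely routine, is that in the second regime the subarc $\theta\in(2\epsilon,\tfrac{3\pi}{4})$, corresponding to $\lambda$ in the corner of $\mathfrak{D}_\epsilon$ adjacent to the positive real axis, has $\Re(\lambda^{2})\ge 0$ \emph{and} $\Im(\lambda^{2})>0$, so neither hypothesis under which Lemma~\ref{Lem:NEstimate} is stated is satisfied there. I would resolve this by noting that the proof of Lemma~\ref{Lem:NEstimate} never uses the sign of $\Im(\lambda^{2})$: the imaginary part of Green's first identity for a solution $u$ of $(\Delta-\lambda^{2})u=0$ on $M$ reads $-\Im(\lambda^{2})\|u\|^{2}_{L^{2}(M)}=\Im\int_{\partial M}\bar u\,\partial_\nu^{+}u\,dS$, which yields $|\Im(\lambda^{2})|\,\|u\|^{2}_{L^{2}(M)}\le\|\gamma^{+}u\|_{H^{1/2}}\,\|\partial_\nu^{+}u\|_{H^{-1/2}}$ for \emph{any} $\lambda$ with $\Im(\lambda^{2})\neq 0$, and the remaining cutoff and interior elliptic estimates in that proof (which use only that $-1$ is not a Dirichlet eigenvalue) are likewise sign-independent. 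Hence the $\Im(\lambda^{2})$-estimate of Lemma~\ref{Lem:NEstimate} holds for every $\lambda$ with $\Im(\lambda^{2})\neq 0$, which is exactly what the second regime requires; with that observation in place, the remaining steps are the elementary ones indicated above.
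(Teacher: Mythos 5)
Your proof is correct, and its overall skeleton (small $\lambda$ by continuity of $\calN^\pm_\lambda$ on $\overline{\mathfrak{D}_\epsilon}$ plus compactness, large $\lambda$ via Lemma~\ref{Lem:NEstimate} with a case split on $\arg(\lambda^2)$) matches the paper's. The genuine difference is how you treat the corner sector adjacent to the positive real axis, where $\Re(\lambda^2)\ge 0$ and $\Im(\lambda^2)>0$ so that neither stated hypothesis of Lemma~\ref{Lem:NEstimate} applies. The paper handles this by the conjugation symmetry $\overline{\calN^\pm_\lambda(f)}=\calN^\pm_{-\bar\lambda}(\bar f)$: the reflection $\lambda\mapsto-\bar\lambda$ maps the already-controlled sector $3\pi/4-\epsilon\le\arg\lambda\le\pi-\epsilon$ onto $\epsilon\le\arg\lambda\le\pi/4+\epsilon$, so the lemma is used strictly as a black box. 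You instead reopen the proof of Lemma~\ref{Lem:NEstimate} and observe that the imaginary part of Green's identity gives $|\Im(\lambda^2)|\,\|u\|^2_{L^2(M)}\le\|\gamma^+u\|_{H^{1/2}}\|\partial_\nu^+u\|_{H^{-1/2}}$ for either sign of $\Im(\lambda^2)$, while the cutoff and elliptic estimates are sign-independent; this is a correct and clean strengthening of the lemma (and indeed, for $\arg\lambda\in(\epsilon,3\pi/8)$ you still have $\Im\lambda>0$, so the exterior solution is in $H^1(M)$ and Green's identity is justified exactly as in the paper, and $\lambda^2\notin\R$ keeps $\calN^-_\lambda$ well defined). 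The trade-off: the paper's symmetry argument needs no modification of the lemma but relies on the structural identity for $\calN^\pm_{-\bar\lambda}$, whereas your route requires re-examining the lemma's proof but yields a slightly more general statement (the $\Im$-estimate for all $\Im(\lambda^2)\neq0$) that makes the corollary an immediate consequence. Your elementary estimates ($\cos\theta\le-1/\sqrt2$ on $[3\pi/4,5\pi/4]$, $|\sin\theta|\ge\min\{\sin(2\epsilon),1/\sqrt2\}$ on the complement, and $(1+|\lambda|^2)^2/|\lambda|^2\le 2(1+|\lambda|^2)$ for $|\lambda|>1$) all check out.
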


\begin{proof}
It is enough to prove the statement for $|\lambda| \geq r > 0$, since $ \calN^\pm_\lambda $ are continuous at $\lambda = 0$. Lemma \ref{Lem:NEstimate} now implies the estimate for any $\epsilon > 0$, $r > 0$ and $\lambda$ such that $\pi/4 + \epsilon \leq \arg(\lambda) \leq \pi - \epsilon$. Indeed, the estimate involving $\Re(\lambda^2)$ covers the region where $\pi/4 + \epsilon \leq \arg(\lambda) \leq 3\pi/4 - \epsilon$ whereas the one based on $\Im(\lambda^2)$ is valid for $\pi/2 + \epsilon \leq \arg(\lambda) \leq \pi - \epsilon$. In order to extend the estimate to all of $\mathfrak{D}_\epsilon$, we observe that $\overline{\calN^\pm_\lambda (f)} = \calN^\pm_{\bar\lambda} (\bar f) = \calN^\pm_{-\bar\lambda} (\bar f)$ by construction. Moreover, the map $\lambda \mapsto -\bar\lambda$ maps the sector $3\pi/4 -\epsilon \leq \arg(\lambda) \leq \pi-\epsilon$ bijectively to $\epsilon \leq \arg(\lambda) \leq \pi/4+\epsilon$. Since we already established the estimate in the former region, this proves the statement on all of $\mathfrak{D}_\epsilon$.    
\end{proof}

Taking into account \eqref{eqn:NNQ}, we  also obtain
\begin{corollary}\label{supercor}
For any $\epsilon>0$ we have there exists a constant $C=C(\epsilon)$ such that
\begin{align*}
  \| Q_\lambda^{-1} \|_{H^{1/2}(\partial \calO) \to H^{-1/2}(\partial \calO)} \leq  C (1 + |\lambda|^2).
\end{align*}
for all $\lambda$ in the sector $\mathfrak{D}_\epsilon$.
\end{corollary}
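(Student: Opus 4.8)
The plan is to read the estimate off directly from the operator identity \eqref{eqn:NNQ}, which expresses the inverse of the single layer operator as the sum of the interior and exterior Dirichlet-to-Neumann maps, $Q_\lambda^{-1} = \calN^+_\lambda + \calN^-_\lambda$, and then to apply the uniform bound of Corollary \ref{estimated2n} to each of the two summands. Concretely, taking the $H^{1/2}(\partial\calO)\to H^{-1/2}(\partial\calO)$ operator norm on both sides of \eqref{eqn:NNQ} and using subadditivity of the norm gives $\|Q_\lambda^{-1}\|_{H^{1/2}\to H^{-1/2}} \le \|\calN^+_\lambda\|_{H^{1/2}\to H^{-1/2}} + \|\calN^-_\lambda\|_{H^{1/2}\to H^{-1/2}} \le 2C(\epsilon)(1+|\lambda|^2)$ for $\lambda \in \mathfrak{D}_\epsilon$, which is the asserted estimate after absorbing the factor $2$ into the constant.

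The one point that needs a word of justification is that the identity \eqref{eqn:NNQ} is genuinely available at every point of $\mathfrak{D}_\epsilon$. The exterior map $\calN^+_\lambda$ exists for all $\Im\lambda>0$, hence on the whole sector; the interior map $\calN^-_\lambda$ requires that $\lambda^2$ not be an interior Dirichlet eigenvalue of $\calO$, but those eigenvalues lie in $(0,\infty)$, while for $\lambda\in\mathfrak{D}_\epsilon$ one has $\arg(\lambda^2)\in(2\epsilon,2\pi-2\epsilon)$, so $\lambda^2$ stays off the positive real axis. Moreover, by Lemma \ref{Lemma23} the operator $Q_\lambda$ is invertible throughout $\mathfrak{D}_\epsilon$, with $Q_\lambda^{-1}$ holomorphic there and continuous up to the closure as a map $H^s(\partial\calO)\to H^{s-1}(\partial\calO)$, and by the corollary following that lemma the same regularity holds for $\calN^\pm_\lambda$. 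Hence all three operators in \eqref{eqn:NNQ} are bounded maps $H^{1/2}(\partial\calO)\to H^{-1/2}(\partial\calO)$ on the sector, and the identity, being valid wherever $Q_\lambda$ is invertible, holds at every such $\lambda$.

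I expect no real obstacle: the content is entirely in Corollary \ref{estimated2n} and Lemma \ref{Lemma23}, and what remains is the triangle inequality applied to \eqref{eqn:NNQ}. The only place where one must be mildly careful is the behaviour near $\lambda=0$, where in even dimensions $Q_\lambda$ fails to be holomorphic because of the $\lambda^{d-2}\log\lambda$ term; this is already handled by Lemma \ref{Lemma23}, which shows $Q_\lambda^{-1}$ is Hahn-holomorphic and in particular bounded near zero in the sector, so the value near the origin is consistent with—indeed dominated by—the bound $1+|\lambda|^2$, and no separate analysis at $\lambda=0$ is needed.
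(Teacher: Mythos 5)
Your argument is exactly the paper's: the estimate is read off from the identity \eqref{eqn:NNQ}, $Q_\lambda^{-1}=\calN^+_\lambda+\calN^-_\lambda$, together with the uniform bound of Corollary \ref{estimated2n} on the sector $\mathfrak{D}_\epsilon$. The extra remarks you add about the validity of \eqref{eqn:NNQ} on the sector and the behaviour near $\lambda=0$ (via Lemma \ref{Lemma23}) are sound and only make explicit what the paper leaves implicit.
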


For $f \in C^\infty_0(\R^d)$ we have that $g=(\Delta_0 - \lambda^2)^{-1} f - \calS_\lambda Q_\lambda^{-1} \gamma   (\Delta_0 - \lambda^2)^{-1}f$
satisfies Dirichlet boundary conditions and $(\Delta-\lambda^2) g = f$ in $\R^d \setminus \partial\calO$. This shows that we have the following formula for the difference of resolvents
\begin{align} \label{eqn:ResolvDiffQ}
 (\Delta - \lambda^2)^{-1} - (\Delta_0 - \lambda^2)^{-1} = - \calS_\lambda Q_\lambda^{-1} \gamma   (\Delta_0 - \lambda^2)^{-1} =  - \calS_\lambda Q_\lambda^{-1} \calS^\trans_{\lambda}
\end{align}
as maps from $C^\infty_0(\R^d)$ to $C^\infty(\overline \calO) \oplus C^\infty(\overline M)$. Here $\calS^\trans_{\lambda}$ is the adjoint operator to 
$\calS_\lambda$ obtained from the real inner product. We will now use the properties of $Q_\lambda$ to establish trace-class properties of suitable differences of resolvents. 

\begin{theorem} \label{Thm:DifferenceProperties} 
Let $\epsilon>0$ and also suppose that $\Omega$ is a smooth open set in $\R^d$ such that  $\overline \Omega \cap \overline \calO = \emptyset$. Let $\delta=\dist(\partial\mathcal{O},\Omega)>0$. If $p$ is the projection onto $L^2(\Omega)$ in $L^2(\R^d)$ then the operator
 \begin{align*}
     p(\Delta - \lambda^2)^{-1}p - p(\Delta_0 - \lambda^2)^{-1} p 
 \end{align*}
is trace-class for all $\lambda \in \mathfrak{D}_\epsilon$. Moreover for any $\delta'\in (0,\delta)$, its trace norm is bounded by
 \begin{align} \label{eqn:TraceNormBound}
  \| p(\Delta - \lambda^2)^{-1}p - p(\Delta_0 - \lambda^2)^{-1} p \|_1 \leq C_{\delta',\epsilon}\rho(\Im\lambda) e^{-\delta'\Im(\lambda)}
 \end{align}
 for all $\lambda \in \mathfrak{D}_\epsilon$, where $\rho$ is defined by \eqref{rho}. The operator 
 $$
 (\Delta - \lambda^2)^{-1} - (\Delta_0 - \lambda^2)^{-1}
 $$ 
 has integral kernel $k_\lambda$ in $C^\infty(\Omega \times \Omega)$ for all $\lambda \in  \mathfrak{D}_\epsilon$. There exist  $C_1, C_2 > 0$ depending on $\Omega$ and $\epsilon$ such that on the diagonal in $\Omega \times \Omega$, we have 
 \begin{align} \label{eqn:DiagKernelEst}
  | k_{\lambda}(x,x) | \leq C_1 \frac{e^{-C_2 \cdot \dist(x,\del\calO) \cdot \Im\lambda} }{(\dist(x,\del\calO))^{2d-4}},
 \end{align}
 for large $\dist(x,\del\calO)$ for $d\geq 3$ and 
 \begin{align}
    | k_{\lambda}(x,x)| \leq C_1(1+|\log(\lambda \cdot \dist(x,\del\calO))|)^2
 e^{-C_2 \cdot \dist(x,\del\calO) \cdot \Im\lambda}
 \end{align}
 for $d=2$.
 \end{theorem}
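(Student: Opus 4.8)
The plan is to reduce everything to the boundary by means of the resolvent identity \eqref{eqn:ResolvDiffQ}. Conjugating it by the cut-off $p$, and using that $p$ is a real self-adjoint multiplication operator so that $\calS^\trans_\lambda p=(p\calS_\lambda)^\trans$, one rewrites
\[
 p\bigl((\Delta - \lambda^2)^{-1} - (\Delta_0 - \lambda^2)^{-1}\bigr)p \;=\; -\,(p\calS_\lambda)\, Q_\lambda^{-1}\, (p\calS_\lambda)^\trans ,
\]
and views the right-hand side as the composition of $(p\calS_\lambda)^\trans\colon L^2(\Omega)\to H^{1/2}(\partial\calO)$, then $Q_\lambda^{-1}\colon H^{1/2}(\partial\calO)\to H^{-1/2}(\partial\calO)$, then $p\calS_\lambda\colon H^{-1/2}(\partial\calO)\to L^2(\Omega)$. (We may assume $\epsilon<\tfrac{\pi}{2}$, since $\mathfrak{D}_\epsilon$ is empty otherwise.) Because $\dist(\Omega,\partial\calO)=\delta>0$, Proposition \ref{Prop:EstimateS}, part \eqref{einsbound}, applied with $s=-\tfrac12$ and with $\chi$ the indicator function of $\Omega$, shows that $p\calS_\lambda\colon H^{-1/2}(\partial\calO)\to L^2(\Omega)$ is Hilbert--Schmidt with norm at most $C_{\delta'',\epsilon}\,\rho(\Im\lambda)^{1/2}e^{-\delta''\Im\lambda}$ for any $\delta''\in(0,\delta)$; its transpose $(p\calS_\lambda)^\trans$ satisfies the same bound. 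The middle factor is bounded, $\|Q_\lambda^{-1}\|_{H^{1/2}\to H^{-1/2}}\le C(1+|\lambda|^2)$, by Corollary \ref{supercor}. Hence the composition is trace-class with trace norm at most $C(1+|\lambda|^2)\,\rho(\Im\lambda)\,e^{-2\delta''\Im\lambda}$; given $\delta'\in(0,\delta)$, one chooses $\delta''\in(\delta',\delta)$ and absorbs the polynomial prefactor into the exponential using $\Im\lambda\ge(\sin\epsilon)|\lambda|$ on $\mathfrak{D}_\epsilon$, which gives \eqref{eqn:TraceNormBound}.

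For the integral kernel the same factorisation is used pointwise. Writing $\Phi_\lambda(x):=G_{\lambda,0}(x,\cdot)|_{\partial\calO}$ for the boundary restriction of the free Green's function \eqref{eqn:GreensFunction}, the identity \eqref{eqn:ResolvDiffQ} and the symmetry $G_{\lambda,0}(x,y)=G_{\lambda,0}(y,x)$ give
\[
 k_\lambda(x,x') \;=\; -\,\bigl\langle \Phi_\lambda(x),\; Q_\lambda^{-1}\Phi_\lambda(x')\bigr\rangle, \qquad (x,x')\in\Omega\times\Omega,
\]
with $\langle\cdot,\cdot\rangle$ the real $H^{1/2}$--$H^{-1/2}$ duality pairing on $\partial\calO$. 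Since $\Omega$ and $\partial\calO$ are disjoint at positive distance, $(x,y)\mapsto G_{\lambda,0}(x,y)$ is smooth on $\Omega\times\partial\calO$, and compactness of $\partial\calO$ makes $x\mapsto\Phi_\lambda(x)$ a smooth map $\Omega\to C^\infty(\partial\calO)\subset H^{1/2}(\partial\calO)$; composing with the bounded operator $Q_\lambda^{-1}$ and the continuous bilinear pairing shows $k_\lambda\in C^\infty(\Omega\times\Omega)$ and that it is the integral kernel of $(\Delta-\lambda^2)^{-1}-(\Delta_0-\lambda^2)^{-1}$ on $\Omega\times\Omega$.

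It remains to estimate the kernel on the diagonal. From the last display and Corollary \ref{supercor},
\[
 |k_\lambda(x,x)| \;\le\; \|Q_\lambda^{-1}\|_{H^{1/2}\to H^{-1/2}}\,\|\Phi_\lambda(x)\|_{H^{1/2}(\partial\calO)}^2 \;\le\; C(1+|\lambda|^2)\,\|\Phi_\lambda(x)\|_{H^{1/2}(\partial\calO)}^2 .
\]
One then bounds $\|\Phi_\lambda(x)\|_{H^{1/2}}^2$ by interpolating between $\|\Phi_\lambda(x)\|_{L^2(\partial\calO)}$ and $\|\Phi_\lambda(x)\|_{H^1(\partial\calO)}$ and inserts the pointwise bounds for $G_{\lambda,0}(x,y)$ and its tangential $y$-derivatives coming from the Hankel asymptotics of Appendix \ref{AA}: for $|x-y|\asymp t:=\dist(x,\partial\calO)$ large these read, up to constants, $|G_{\lambda,0}(x,y)|\lesssim|\lambda|^{(d-3)/2}t^{-(d-1)/2}e^{-\Im\lambda\,t}$ when $|\lambda|t\gtrsim1$ and $|G_{\lambda,0}(x,y)|\lesssim t^{-(d-2)}$ for $d\ge3$ (respectively $1+|\log(\lambda t)|$ for $d=2$) when $|\lambda|t\lesssim1$, with one extra factor $(1+|\lambda|+t^{-1})$ per tangential derivative. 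When $|\lambda|t\lesssim1$ the exponential $e^{-C_2 t\Im\lambda}$ is bounded below, the polynomial $\lambda$-factors are harmless, and the small-argument bound produces $t^{-(2d-4)}$ (respectively $(1+|\log(\lambda t)|)^2$); when $|\lambda|t\gtrsim1$ every power of $|\lambda|$, including the $(1+|\lambda|^2)$ from $\|Q_\lambda^{-1}\|$, is absorbed into $e^{-\Im\lambda\,t}$ at the cost of extra negative powers of $t$, again via $\Im\lambda\gtrsim|\lambda|$, which is harmless since $t$ is large. This yields \eqref{eqn:DiagKernelEst} and its $d=2$ counterpart.

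The step I expect to be the main obstacle is precisely this last one: making the pointwise and $H^{1/2}$ estimates on $G_{\lambda,0}(x,\cdot)$ sharp and uniform over the whole sector $\mathfrak{D}_\epsilon$, carefully tracking the Hankel functions near both $0$ and $\infty$ and the dimension-dependent thresholds — the logarithmic terms in dimensions $d=2$ and $d=4$, which is exactly what $\rho$ in \eqref{rho} encodes in the trace-norm bound, and the borderline decay rates $t^{-(2d-4)}$ versus $t^{-(d-1)}$. Once these two-regime Hankel bounds are in place, the remaining bookkeeping — splitting at $|\lambda|t\sim1$ and trading polynomial growth in $\lambda$ for exponential decay — is routine.
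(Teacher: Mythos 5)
Your proof is correct and follows essentially the same route as the paper: the factorisation $-\,(p\calS_\lambda)Q_\lambda^{-1}(p\calS_\lambda)^\trans$ with the Hilbert--Schmidt bounds of Proposition \ref{Prop:EstimateS} and the $H^{1/2}\to H^{-1/2}$ bound of Corollary \ref{supercor}, and then the duality-pairing estimate for the kernel using boundary Sobolev norms of $G_{\lambda,0}(x,\cdot)$ controlled by the two-regime Hankel asymptotics. The only cosmetic difference is that you sketch re-deriving the boundary estimates that the paper packages as Lemma \ref{Lem:PointwiseHankel} and Corollary \ref{Cor:SobolevEst}, and you make explicit the absorption of the $(1+|\lambda|^2)$ factor into the exponential, which the paper leaves implicit.
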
 
\begin{proof}[Proof of Theorem \ref{Thm:DifferenceProperties}]
From \eqref{eqn:ResolvDiffQ}, we have
\begin{align*}
 p(\Delta -\lambda^2)^{-1}p - p(\Delta_0 - \lambda^2)^{-1} p = - p \calS_\lambda Q_\lambda^{-1} \calS^\trans_{\lambda} p = - p \calS_\lambda Q_\lambda^{-1} (p\calS_\lambda)^\trans.
\end{align*}
Choosing $\chi$ to be the characteristic function of $\Omega$, $p\calS_\lambda$ coincides with $\chi\calS_\lambda$. By proposition \ref{Prop:EstimateS}, $p \calS_\lambda$ is a Hilbert Schmidt operator $H^s(\del\calO) \to L^2(\R^d)$ for all $s \in \R$ and consequently, $(p \calS_\lambda)^\trans : L^2(\R^d) \to H^s(\del\calO)$ is Hilbert Schmidt as well. Since $Q_\lambda^{-1}$ is bounded by Corollary \ref{supercor}, we have factorised $p(\Delta -\lambda^2)^{-1}p - p(\Delta_0 - \lambda^2)^{-1} p$ into a product of two Hilbert Schmidt operators and a bounded operator, which is trace-class by \cite{ShubinPseudos}, (A.3.4) and (A.3.2). The norm estimates in Proposition \ref{Prop:EstimateS} and Corollary \ref{supercor} then imply the bound \eqref{eqn:TraceNormBound} on the trace norm.\\
The kernel of $- p \calS_\lambda Q_\lambda^{-1} \calS^\trans_\lambda p$ is smooth since $Q_\lambda^{-1}$ is a pseudodifferential operator of order 1 and $p \calS_\lambda$ is smoothing for $\dist(\Omega,\del\calO) > 0$. For $x \in \Omega$ fixed, $G_{\lambda,0}(x, \cdot) \in C^\infty(\del\calO)$ and hence in $H^s(\del\calO)$ for all $s \in \R$. Using the dual pairing $\langle,\rangle$ between $H^{1/2}$ and $H^{-1/2}$, we have for $x,x' \in \Omega$  in dimensions $d\geq 3$
\begin{align} \label{eqn:EstKernelOffDiag}
| k_{\lambda}(x,x')| 
&= |\langle G_{\lambda,0}(x',\cdot), Q_\lambda^{-1} (\overline{G}_{\lambda,0}(x,\cdot)) \rangle | \notag  \\
&\leq \| G_{\lambda,0}(x',\cdot) \|_{H^{1/2}}  \cdot \| Q_\lambda^{-1} \|_{H^{1/2}(\partial \calO) \to H^{-1/2}(\partial \calO)} \cdot \| G_{\lambda,0}(x,\cdot) \|_{H^{1/2}} \\
&\leq C_1 (\dist(x,\del\calO) \dist(x',\del\calO))^{2-d}\cdot e^{-C_2\Im\lambda (\dist(x,\del\calO) + \dist(x',\del\calO))}, \notag
\end{align}
where we have used Corollary \ref{Cor:SobolevEst} with $0 < 1< \min(\dist(x,\del\calO), \dist(x',\del\calO))$ as well as Corollary \ref{supercor}. The constants $C_1, C_2$ depend on $\epsilon$ and $\Omega$. Similarly, for $d=2$ we have the estimate
\begin{align}
| k_{\lambda}(x,x')| \ \leq \ & C_1(1+|\log(\lambda \cdot\dist(x,\del\calO))|)\cdot(1+|\log(\lambda \cdot\dist(x',\del\calO))|)\times \\ & \qquad e^{-C_2\Im\lambda (\dist(x,\del\calO) + \dist(x',\del\calO))} \nonumber
\end{align} 
 \end{proof}


\section{The structure of $Q_\lambda$ in a multi-component setting} \label{multisec}

The boundary $\partial \calO$ consists of $N$ connected components $\partial \calO_j$. We therefore have an orthogonal decomposition $ L^2(\partial \calO) = \oplus_{j=1}^N L^2(\partial \calO_j)$. Let $p_j: L^2(\partial \calO) \to L^2(\partial \calO_j)$ be the corresponding orthogonal projection and $Q_{j,\lambda}:=p_j Q_{\lambda} p_j$. We can then write $Q_\lambda$ as
\begin{align} \label{eqn:DecompQlambda}
 Q_\lambda = \sum_{j=1}^N Q_{j,\lambda} + \sum_{j \neq k} p_j Q_\lambda p_k =: \tQ_\lambda + \calT_\lambda .
\end{align}
Note that $Q_{j,\lambda}$, regarded as a map from $L^2(\partial \calO_j) \to L^2(\partial \calO_j)$, does not depend on the other components and equals
$Q_\lambda$ for $\calO = \calO_j$; hence it is invertible. $\tQ_\lambda$ describes the diagonal part of the operator $Q_\lambda$ with respect to the decomposition $ L^2(\partial \calO) = \oplus_{j=1}^N L^2(\partial \calO_j)$ whereas $\calT_\lambda$ is the off-diagonal remainder. Let 
$$
\delta := \min_{j \not= k} \mathrm{dist}(\partial \calO_j,\partial \calO_k) >0. 
$$

We have the following:
\begin{proposition} \label{Prop:CalT}
For $\epsilon>0$, $\calT_\lambda$ is a holomorphic family of smoothing operators on the sector $\mathfrak{D}_\epsilon$. There exists $C_{\delta',\epsilon} > 0$ such that for $\lambda \in \mathfrak{D}_\epsilon$ with $| \lambda | >1$ we have
 \begin{align*}
  \| \calT_\lambda \|_{H^{-s} \to H^{s}} \leq C_{\delta',\epsilon}e^{- \delta' \Im{\lambda}}, & \quad  \| \frac{d}{d \lambda}\calT_\lambda \|_{H^{-s} \to H^{s}} \leq C_{\delta',\epsilon}e^{- \delta' \Im{\lambda}}
 \end{align*}
 for any $s \in \R$ and any $\delta'$ with $0 < \delta' <\delta$. If $d$ is odd, we also have that $\calT_\lambda: H^{-s}(\partial \calO) \to H^{s}(\partial \calO)$ is a holomorphic family on the complex plane. If $d$ is even then for any $s \in \R$ the family $\calT_\lambda: H^{-s}(\partial \calO) \to H^{s}(\partial \calO)$ is holomorphic in any sector of the logarithmic cover of the complex plane and Hahn-meromorphic at zero. If $d>2$ then $\calT_\lambda$ is Hahn holomorphic at zero.
\end{proposition}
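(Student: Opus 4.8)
The plan is to reduce everything to pointwise estimates on the free Green's function. By the definition in \eqref{eqn:DecompQlambda} the Schwartz kernel of $\calT_\lambda$ equals $G_{\lambda,0}(x,y)$ for $x\in\partial\calO_j$, $y\in\partial\calO_k$ with $j\neq k$, and vanishes on the diagonal blocks; in particular it is supported on the set where $\dist(x,y)\geq\delta$. On that set $G_{\lambda,0}(\cdot,\cdot)$ is a smooth function of $(x,y)$, so $\calT_\lambda$ is automatically smoothing, and the whole statement reduces to estimating the derivatives $\partial_x^\alpha\partial_y^\beta G_{\lambda,0}(x,y)$, together with $\partial_x^\alpha\partial_y^\beta\tfrac{\der}{\der\lambda}G_{\lambda,0}(x,y)$, uniformly in $\lambda\in\mathfrak{D}_\epsilon$ and in $x,y\in\partial\calO$ with $|x-y|\geq\delta$ (and $|x-y|$ bounded above, since $\partial\calO$ is compact).

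For the bounds I would first invoke the standard fact that, $\partial\calO$ being compact, the $H^{-s}(\partial\calO)\to H^{s}(\partial\calO)$ operator norm of an integral operator is dominated by finitely many $L^\infty$-norms of derivatives of its kernel; thus it suffices to show that for each $(\alpha,\beta)$ one has $|\partial_x^\alpha\partial_y^\beta G_{\lambda,0}(x,y)|\leq C_{\alpha,\beta}(1+|\lambda|)^{M_{\alpha,\beta}}\,\ee^{-\Im(\lambda)|x-y|}$ whenever $|x-y|\geq\delta$. Writing $G_{\lambda,0}(x,y)=c_d\,\lambda^{d-2}\,h(\lambda|x-y|)$ with $h(w)=w^{-(d-2)/2}\Ha_{(d-2)/2}(w)$, this follows from the differentiation and recurrence identities for Hankel functions (Appendix \ref{AA}) together with the large-argument bound $|\Ha_\nu(w)|\leq C_\nu|w|^{-1/2}\ee^{-\Im w}$, which holds uniformly for $|w|\geq1$ and $\arg w$ in any closed subinterval of $(0,\pi)$ — hence for $w=\lambda|x-y|$ with $\lambda\in\mathfrak{D}_\epsilon$ and $|\lambda|>1/\delta$; the remaining range $|\lambda|\leq 1/\delta$ is a compact subset of $\overline{\mathfrak{D}_\epsilon}$ on which continuity gives a uniform bound. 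On $\mathfrak{D}_\epsilon$ one has $\Im(\lambda)\geq\sin(\epsilon)|\lambda|$, so for $|\lambda|>1$ and $\delta'<\delta$ the polynomial factor is absorbed, $(1+|\lambda|)^{M}\,\ee^{-\Im(\lambda)|x-y|}\leq(1+|\lambda|)^{M}\,\ee^{-(\delta-\delta')\sin(\epsilon)|\lambda|}\,\ee^{-\delta'\Im(\lambda)}\leq C_{\delta',\epsilon}\,\ee^{-\delta'\Im(\lambda)}$, which is the first claimed inequality; differentiating once in $\lambda$ replaces $h$ by a finite sum of Hankel functions of orders $\tfrac{d-2}{2}$ and $\tfrac{d}{2}$ against powers of $\lambda$ and $|x-y|$ of the same structure, so the identical estimate gives the bound on $\tfrac{\der}{\der\lambda}\calT_\lambda$.

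For the analytic structure in $\lambda$, note that for fixed $x\neq y$ the function $G_{\lambda,0}(x,y)$ is holomorphic in $\lambda$ on the open upper half plane, and the locally uniform kernel bounds above promote this to holomorphy of $\lambda\mapsto\calT_\lambda$ in the $H^{-s}\to H^{s}$ norm for every $s$ (by Morera applied to scalar pairings, together with density of smooth functions), so $\calT_\lambda$ is a holomorphic family of smoothing operators on $\mathfrak{D}_\epsilon$. When $d$ is odd, $\tfrac{d-2}{2}$ is a half-integer and \eqref{eqn:GreensFunction} exhibits $G_{\lambda,0}(x,y)$ as $\ee^{\rmi\lambda|x-y|}$ times a polynomial in $\lambda$ whose coefficients are smooth in $(x,y)$ off the diagonal; hence $\calT_\lambda$ extends to an entire family of smoothing operators on $\C$. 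When $d$ is even I would use the splitting $G_{\lambda,0}=\tilde G_{\lambda,0}+F_\lambda\,\lambda^{d-2}\log\lambda$ recalled after \eqref{eqn:GreensFunction}: restricting to the off-diagonal blocks gives $\calT_\lambda=\tilde\calT_\lambda+F_\lambda^{\mathrm{off}}\,\lambda^{d-2}\log\lambda$, where $\tilde\calT_\lambda$ (restriction of the entire family $\tilde G_{\lambda,0}$, whose kernel is smooth off the diagonal) and $F_\lambda^{\mathrm{off}}$ (restriction of the entire family $F_\lambda$ with its everywhere-smooth kernel) are entire families of smoothing operators. On any sector of the logarithmic cover not containing $0$ the factor $\log\lambda$ is holomorphic, so $\calT_\lambda$ is holomorphic there; near $\lambda=0$ the term $\lambda^{d-2}\log\lambda$ is a single Hahn monomial, so $\calT_\lambda$ is Hahn-meromorphic of finite type, and when $d>2$ this monomial tends to $0$ as $\lambda\to0$, making $\calT_\lambda$ Hahn-holomorphic at $0$.

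The only genuine work is bookkeeping: one has to keep the exponent $M_{\alpha,\beta}$ of $|\lambda|$ explicit while iterating the Hankel differentiation identities, and one has to dispose of the regime in which $|\lambda|\,|x-y|$ stays bounded, where the exponential asymptotics of $\Ha_\nu$ no longer apply — both dealt with as above, the latter simply because on $\{|\lambda|\leq1/\delta\}\cap\overline{\mathfrak{D}_\epsilon}$ the factor $\ee^{-\delta'\Im\lambda}$ is bounded below by a positive constant. Everything else is a routine synthesis of Proposition \ref{prop1}, the Hankel estimates of Appendix \ref{AA}, and the Hahn-holomorphy formalism of Appendix \ref{hahnapp}.
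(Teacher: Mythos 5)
Your proposal is correct and follows essentially the same route as the paper: the paper likewise reduces the operator bounds to off-diagonal kernel estimates for $G_{\lambda,0}$ and its $\lambda$-derivative (delegated there to Lemma \ref{Lem:PointwiseHankel} and Corollary \ref{Cor:DiffOpBdy}, which you in effect re-derive from the Hankel asymptotics), and obtains the holomorphy, Hahn-meromorphy and Hahn-holomorphy statements from the splitting $G_{\lambda,0}=\tilde G_{\lambda,0}+F_\lambda\lambda^{d-2}\log\lambda$ of Proposition \ref{prop1}\eqref{fuenf} restricted to the off-diagonal blocks. No gaps worth noting.
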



\begin{proof}
 The first estimate is a consequence of Lemma \ref{Lem:PointwiseHankel} and Corollary \ref{Cor:DiffOpBdy}, where the precise behaviour of the kernel in various regimes is given. Indeed, as $\dist(\del\calO_j, \del\calO_k) > 0$ for $j \neq k$, these statements yield an estimate for $\|P_x P_y G_{}\|_{L^2(\del\calO_j \times \del\calO_k)}$ which implies the estimate for $\| \calT_\lambda \|_{H^{-s} \to H^{s}}$. Taking into account \eqref{eqn:HankelDerivative}, the kernel estimates can be extended to $d G_{\lambda,0}/d\lambda$ and this gives the second estimate. The holomorphic and meromorphic properties of $\calT_\lambda$ follow immediately from the corresponding properties of the kernels established in Prop. \ref{prop1}, (\ref{fuenf}) bearing in mind that $\calT_\lambda$ consists of off-diagonal contributions of $Q_\lambda$.
\end{proof}

As observed before, $Q_\lambda$ and $\tQ_\lambda$ are invertible for $\Im{\lambda}>0$. For these $\lambda$, it then follows from \eqref{eqn:DecompQlambda} that
\begin{align}\label{eqn:DifferenceQQtilde}
 Q_\lambda^{-1} - \tQ_{\lambda}^{-1} = - Q_{\lambda}^{-1} \calT_\lambda \tQ_{\lambda}^{-1} = - \tQ_{\lambda}^{-1} \calT_\lambda Q_{\lambda}^{-1} .
\end{align}
Note that the right hand side of this equation is a smoothing operator because $\calT_\lambda$ is smoothing by Proposition \ref{Prop:CalT} and $Q_\lambda^{-1}, \tQ_\lambda^{-1}$ are pseudodifferential operators of order 1.

\begin{proposition} \label{prop:esti}
For $\epsilon>0$, we have that $Q_\lambda \tQ_{\lambda}^{-1} - 1$ is a holomorphic family of smoothing operators in the sector $\mathfrak{D}_\epsilon$
that is continuous on the closure $\overline{\mathfrak{D}_\epsilon}$. If $\delta'>0$ is any positive real number smaller than $\delta$, then, for any $s \in \R$ we have
 \begin{align*}
  \| Q_\lambda \tQ_{\lambda}^{-1}  - 1\|_{H^{\frac{1}{2}} \to H^{s}} \leq C_{\delta',\epsilon}e^{- \delta' \Im{\lambda}}.
 \end{align*}
 If $d$ is odd then for any $s \in \R$ the family $Q_\lambda \tQ_{\lambda}^{-1} - 1: H^{-s}(\partial \calO) \to H^{s}(\partial \calO)$ is meromorphic of finite type in the complex plane and regular at zero.
 If $d$ is even then for any $s \in \R$ the family $Q_\lambda \tQ_{\lambda}^{-1} - 1: H^{-s}(\partial \calO) \to H^{s}(\partial \calO)$ is meromorphic of finite type in any sector of the logarithmic cover of the complex plane and Hahn-holomorphic at zero.
\end{proposition}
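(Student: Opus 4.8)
The strategy is to read all clauses of the proposition off the two algebraic identities $Q_\lambda\tQ_\lambda^{-1}-1=\calT_\lambda\tQ_\lambda^{-1}=Q_\lambda\bigl(\tQ_\lambda^{-1}-Q_\lambda^{-1}\bigr)$, the first coming from $Q_\lambda=\tQ_\lambda+\calT_\lambda$ and the second from \eqref{eqn:DifferenceQQtilde}. All information on the factors is at hand: for $\calT_\lambda$ from Proposition \ref{Prop:CalT}, and for $\tQ_\lambda^{-1}=\bigoplus_{j=1}^N Q_{j,\lambda}^{-1}$ from Lemma \ref{Lemma23} and Corollary \ref{supercor} applied to each $\calO_j$ separately, noting that $Q_{j,\lambda}$ viewed on $L^2(\partial\calO_j)$ is exactly the single layer operator $Q_\lambda$ of $\calO_j$, and that a finite direct sum inherits holomorphy, Hahn-holomorphy, and the meromorphy-of-finite-type properties. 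For the qualitative part, $\calT_\lambda$ is a holomorphic family of smoothing operators on $\mathfrak{D}_\epsilon$ and $\tQ_\lambda^{-1}$ a holomorphic family of pseudodifferential operators of order $1$ there, so the composition $\calT_\lambda\tQ_\lambda^{-1}$ is a holomorphic family of smoothing operators on $\mathfrak{D}_\epsilon$; its continuity on $\overline{\mathfrak{D}_\epsilon}$ away from $\lambda=0$ is immediate, and continuity at $0$ will follow from the (Hahn-)holomorphy at $\lambda=0$ established below.

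For the exponential estimate I would factor $\calT_\lambda\tQ_\lambda^{-1}$ as $H^{1/2}(\partial\calO)\xrightarrow{\,\tQ_\lambda^{-1}\,}H^{-1/2}(\partial\calO)\xrightarrow{\,\calT_\lambda\,}H^{s}(\partial\calO)$ and combine $\|\tQ_\lambda^{-1}\|_{H^{1/2}\to H^{-1/2}}\le C(1+|\lambda|^2)$ (Corollary \ref{supercor}, componentwise) with $\|\calT_\lambda\|_{H^{-1/2}\to H^{s}}\le C_{\delta'',\epsilon}e^{-\delta''\Im\lambda}$ for $|\lambda|>1$ (Proposition \ref{Prop:CalT}), valid for any $\delta''$ with $0<\delta''<\delta$. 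Choosing $\delta'<\delta''<\delta$ and using $\Im\lambda\ge\sin(\epsilon)|\lambda|$ on $\mathfrak{D}_\epsilon$, the factor $(1+|\lambda|^2)$ is absorbed into $e^{(\delta''-\delta')\Im\lambda}$, which gives the bound for $|\lambda|>1$; for $|\lambda|\le1$ it follows from boundedness of the family on the compact set $\overline{\mathfrak{D}_\epsilon}\cap\{|\lambda|\le1\}$ (using continuity up to $0$) together with $e^{-\delta'\Im\lambda}\ge e^{-\delta'}$ there.

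For the meromorphic continuation: when $d$ is odd, $\calT_\lambda$ is entire and $\tQ_\lambda^{-1}$ is meromorphic of finite type on $\C$ and regular at $\lambda=0$ (Lemma \ref{Lemma23}; note $0$ is not a Dirichlet eigenvalue of any $\calO_j$), so $\calT_\lambda\tQ_\lambda^{-1}$ is meromorphic of finite type on $\C$ and regular at $0$ — at a pole $\lambda_0$ of $\tQ_\lambda^{-1}$ the negative-order Laurent coefficients of the product are finite sums of operators $\calT_{\lambda_0}^{(i)}D_k$ with $D_k$ a finite-rank singular Laurent coefficient of $\tQ_\lambda^{-1}$, hence of finite rank. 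When $d$ is even the same reasoning in a sector of the logarithmic cover not containing $0$, where $\log\lambda$ is holomorphic, gives meromorphy of finite type away from $0$; and for Hahn-holomorphy at $0$ I would use $Q_\lambda\tQ_\lambda^{-1}-1=Q_\lambda\bigl(\tQ_\lambda^{-1}-Q_\lambda^{-1}\bigr)$, where $\tQ_\lambda^{-1}-Q_\lambda^{-1}$ is Hahn-holomorphic at $0$ (Lemma \ref{Lemma23}) and $Q_\lambda=q_\lambda+r_\lambda\lambda^{d-2}\log\lambda$ with $q_\lambda,r_\lambda$ entire by \eqref{fuenf} of Proposition \ref{prop1}: for $d\ge4$ the factor $\lambda^{d-2}\log\lambda$ vanishes at $0$, so $Q_\lambda$ is Hahn-holomorphic at $0$ and the product is Hahn-holomorphic at $0$ too, which also settles continuity at $0$ in these cases.

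The remaining case $d=2$ is the main obstacle, since then $\lambda^{d-2}\log\lambda=\log\lambda$ and $Q_\lambda=q_\lambda+r_\lambda\log\lambda$ carries a genuine $\log\lambda$ singularity at $0$, with $r_0=\gamma F_0\gamma^*$ the rank-one operator $u\mapsto c_0\bigl(\int_{\partial\calO}u\,\der\sigma\bigr)\cdot 1$ on $\partial\calO$, $c_0=\tfrac{1}{2\rmi}(2\pi)^{-(d-1)}\vol(\sphere)$. Writing $V_\lambda=\tQ_\lambda^{-1}-Q_\lambda^{-1}$ (Hahn-holomorphic at $0$, value $V_0=\tQ_0^{-1}-Q_0^{-1}$), one checks that $Q_\lambda V_\lambda$ is Hahn-holomorphic at $0$ except possibly for the single term $(\log\lambda)\,r_0V_0$, the other contributions being Hahn-holomorphic because they carry a positive power of $\lambda$ or involve the part of $V_\lambda$ vanishing at $0$. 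It thus remains to show $r_0V_0=0$. Comparing coefficients of $\log\lambda$ in $Q_\lambda Q_\lambda^{-1}=\Id$ and in $\tQ_\lambda\tQ_\lambda^{-1}=\Id$ (writing also $\tQ_\lambda=\tilde q_\lambda+\tilde r_\lambda\log\lambda$ with $\tilde q_\lambda,\tilde r_\lambda$ the entire diagonal parts of $q_\lambda,r_\lambda$) forces $r_0Q_0^{-1}=0$ and $\tilde r_0\tQ_0^{-1}=0$. Since $\tilde r_0u=0$ means $\int_{\partial\calO_j}u\,\der\sigma=0$ for every $j$, hence $\int_{\partial\calO}u\,\der\sigma=0$, hence $r_0u=0$, we have $\ker\tilde r_0\subseteq\ker r_0$, so $r_0\tQ_0^{-1}=0$ as well and $r_0V_0=r_0\tQ_0^{-1}-r_0Q_0^{-1}=0$. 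This yields Hahn-holomorphy of $Q_\lambda\tQ_\lambda^{-1}-1$ at $\lambda=0$, and with it its continuity on all of $\overline{\mathfrak{D}_\epsilon}$, completing the proof.
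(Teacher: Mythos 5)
Your argument is correct and follows essentially the same route as the paper: write $Q_\lambda\tQ_\lambda^{-1}-1=\calT_\lambda\tQ_\lambda^{-1}$, obtain the exponential bound for $|\lambda|>1$ from Proposition \ref{Prop:CalT} combined with Corollary \ref{supercor} (applied componentwise to $\tQ_\lambda$), and obtain boundedness and continuity up to $\lambda=0$ from Hahn-holomorphy via Lemma \ref{Lemma23}, with the $d=2$ logarithmic singularity cancelled because the constant-kernel coefficient $r_0=\gamma F_0\gamma^*$ annihilates $\tQ_0^{-1}$, as forced by the expansion of $\tQ_\lambda\tQ_\lambda^{-1}=1$. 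The only cosmetic difference is that at $\lambda=0$ in $d=2$ you factor through $Q_\lambda\bigl(\tQ_\lambda^{-1}-Q_\lambda^{-1}\bigr)$ and therefore also use $r_0Q_0^{-1}=0$, whereas the paper cancels the singular term directly in $\calT_\lambda\tQ_\lambda^{-1}$ using only $F_0\gamma^*\tQ_0^{-1}=0$ — the same mechanism and conclusion.
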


\begin{proof}
First note that $\tilde Q_\lambda^{-1}$ is Hahn-holomorphic as a family of operators $H^{s+1}(\partial \calO) \to H^{s}(\partial \calO) $. This was shown in the proof of Lemma \ref{Lemma23} using Fredholm theory. If $d \geq 3$ then $\calT_\lambda$ is a Hahn-holomorphic family of operators $H^{s}(\partial \calO) \to H^{r}(\partial \calO)$ for any $r>0$. In case $d=2$ the family $\calT_\lambda$ is Hahn-meromorphic with singular term having integral kernel $(\gamma F_0) \log(\lambda)$,
where $\gamma F_0$ has constant integral kernel. From the singularity expansion of $\tilde Q_\lambda$ and $\tilde Q_\lambda \tilde Q_\lambda^{-1}=1$
we see that $F_0 \gamma^* \tilde Q_0^{-1}=0$. This shows that also in dimension two $\calT_\lambda$ is a Hahn-holomorphic family of operators $H^{s}(\partial \calO) \to H^{r}(\partial \calO)$ for any $r>0$.

Hence, $\calT_\lambda \tQ_\lambda^{-1}$ is Hahn holomorphic as a family of operators $H^{s}(\partial \calO) \to H^{-s}(\partial \calO)$. It is therefore continuous at zero.
From \eqref{eqn:DifferenceQQtilde}, we conclude
\begin{align*}
 Q_\lambda \tQ_\lambda^{-1} - 1 = \calT_\lambda \tQ_\lambda^{-1}.
\end{align*}
This shows now that $Q_\lambda \tQ_\lambda^{-1} - 1$ is Hahn-holomorphic on any sector. It is therefore continuous on $\overline{\mathfrak{D}_\epsilon}$.
This implies the claimed bounds for $|\lambda| \leq 1$. The bounds for $|\lambda|>1$ are implied by the estimates of Prop. \ref{Prop:CalT} and Cor. \ref{supercor}.
\end{proof}

Recall that the Fredholm determinant is defined for operators of the form $1 + T$ where $T$ is trace-class, see e.g. \cite{BS}. The previous proposition now implies the following.

\begin{theorem} \label{cor33} 
 For $\epsilon>0$ the Fredholm determinant $\det\left( Q_\lambda \tQ_{\lambda}^{-1} \right)$ is well defined, holomorphic in the sector $\mathfrak{D}_\epsilon$, and continuous on the closure $\overline{\mathfrak{D}_\epsilon}$. Moreover, the function $\Xi(\lambda) = \log \det\left( Q_\lambda \tQ_{\lambda}^{-1} \right)$ satisfies on $\mathfrak{D}_\epsilon$ the bounds
 \begin{align*}
  |  \Xi(\lambda)| &\leq C_{\delta',\epsilon}e^{- \delta' \Im{\lambda}}, &
  |  \Xi'(\lambda) | &\leq C_{\delta',\epsilon}e^{- \delta' \Im{\lambda}}
 \end{align*}
 for any $\delta'$ with $0 < \delta' < \delta$.
\end{theorem}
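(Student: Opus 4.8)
The plan is to set $T_\lambda := Q_\lambda \tQ_\lambda^{-1} - 1$, which by Proposition~\ref{prop:esti} and \eqref{eqn:DifferenceQQtilde} equals $\calT_\lambda \tQ_\lambda^{-1}$, a holomorphic family of smoothing operators on $\mathfrak{D}_\epsilon$ that is continuous up to $\overline{\mathfrak{D}_\epsilon}$, and then to feed this family into the standard theory of Fredholm determinants. The first step is to upgrade the operator-norm bounds of Propositions~\ref{prop:esti}, \ref{Prop:CalT} and Corollary~\ref{supercor} to a trace-norm bound $\| T_\lambda \|_1 \leq C_{\delta',\epsilon} e^{-\delta' \Im\lambda}$ on $\mathfrak{D}_\epsilon$. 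Since $\partial\calO$ is a compact $(d-1)$-manifold, one has $\| A \|_1 \leq C_N \| A \|_{H^{-N}(\partial\calO) \to H^{N}(\partial\calO)}$ for $N > (d-1)/2$, by writing $A = (1+\Delta_{\partial\calO})^{-N/2} B (1+\Delta_{\partial\calO})^{-N/2}$ with $\|B\|_{L^2 \to L^2} \leq \|A\|_{H^{-N}\to H^N}$ and $(1+\Delta_{\partial\calO})^{-N/2}$ Hilbert--Schmidt. Applying this to $T_\lambda = \calT_\lambda \tQ_\lambda^{-1}$ and factoring $\tQ_\lambda^{-1} : H^{-N} \to H^{-N-1}$ followed by the smoothing operator $\calT_\lambda : H^{-N-1} \to H^N$, one reduces to bounds already available: for $|\lambda| \geq 1$ the factor $\|\calT_\lambda\|_{H^{-N-1}\to H^N}$ is $O(e^{-\delta'' \Im\lambda})$ for any $\delta'' < \delta$ by Proposition~\ref{Prop:CalT}, while $\|\tQ_\lambda^{-1}\|$ is $O(1+|\lambda|^2)$ by Corollary~\ref{supercor}; as $|\lambda| \leq (\sin\epsilon)^{-1}\Im\lambda$ in $\mathfrak{D}_\epsilon$, the polynomial is absorbed into a slightly smaller rate $\delta'$. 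For $|\lambda| \leq 1$ the desired bound is just boundedness, which is the continuity statement of Proposition~\ref{prop:esti} (near $\lambda=0$ in even dimensions this is precisely the Hahn-holomorphy proved there). The same factorisation shows $\lambda \mapsto T_\lambda$ is holomorphic into the trace ideal on $\mathfrak{D}_\epsilon$ and continuous into it on $\overline{\mathfrak{D}_\epsilon}$.

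With this in hand, $\det(1+T_\lambda) = \det(Q_\lambda\tQ_\lambda^{-1})$ is well defined, holomorphic on $\mathfrak{D}_\epsilon$, and continuous on $\overline{\mathfrak{D}_\epsilon}$, because $X \mapsto \det(1+X)$ is analytic and locally Lipschitz on the trace ideal (Theorems~3.3 and~6.5 of~\cite{BS}). For $\Im\lambda > 0$ the operators $Q_\lambda$ and $\tQ_\lambda$ are invertible, so $\det(1+T_\lambda) \neq 0$ on the simply connected set $\mathfrak{D}_\epsilon$; hence there is a well-defined holomorphic branch $\Xi(\lambda) = \log\det(1+T_\lambda)$, which I would normalise by $\Xi(\lambda) \to 0$ as $\Im\lambda \to \infty$ --- legitimate since the standard estimate $|\det(1+T_\lambda) - 1| \leq \|T_\lambda\|_1 e^{\|T_\lambda\|_1} \leq C_{\delta',\epsilon} e^{-\delta'\Im\lambda}$ puts $\det(1+T_\lambda)$ in a small disc about $1$ for $\Im\lambda$ large. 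On the region where $\|T_\lambda\|_1 \leq \tfrac12$ the inequality $|\log(1+w)| \leq 2|w|$ then gives $|\Xi(\lambda)| \leq 2C_{\delta',\epsilon} e^{-\delta'\Im\lambda}$; on the complementary set $\{\lambda \in \mathfrak{D}_\epsilon : \Im\lambda \leq K\}$, which is bounded because $\Im\lambda \geq (\sin\epsilon)|\lambda|$ there, the bound follows from Proposition~\ref{prop:esti} together with $e^{-\delta'\Im\lambda} \geq e^{-\delta'K}$, after enlarging the constant.

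For the derivative estimate I would use $\Xi'(\lambda) = \Tr\big((1+T_\lambda)^{-1}\tfrac{\der}{\der\lambda}T_\lambda\big)$ (again~\cite{BS}), so $|\Xi'(\lambda)| \leq \|(1+T_\lambda)^{-1}\|_{\mathcal{L}(L^2)}\,\|\tfrac{\der}{\der\lambda}T_\lambda\|_1$. The trace norm of $\tfrac{\der}{\der\lambda}T_\lambda = \calT_\lambda'\tQ_\lambda^{-1} - \calT_\lambda\tQ_\lambda^{-1}\tQ_\lambda'\tQ_\lambda^{-1}$ is bounded by $C_{\delta',\epsilon}e^{-\delta'\Im\lambda}$ by the argument of the first step, using the bound on $\calT_\lambda'$ from Proposition~\ref{Prop:CalT} and the fact that $\tQ_\lambda'$ is, like $Q_\lambda$, a (Hahn-)holomorphic family of pseudodifferential operators of negative order with at most polynomial growth. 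The operator norm of $(1+T_\lambda)^{-1} = \tQ_\lambda Q_\lambda^{-1} = 1 - \calT_\lambda Q_\lambda^{-1}$ (by~\eqref{eqn:DifferenceQQtilde}) is uniformly bounded on $\mathfrak{D}_\epsilon$ because $\|\calT_\lambda Q_\lambda^{-1}\|_{L^2\to L^2} \leq \|\calT_\lambda\|_{H^{-1}\to L^2}\|Q_\lambda^{-1}\|_{L^2\to H^{-1}}$ is controlled by Proposition~\ref{Prop:CalT} and Lemma~\ref{Lemma23}. Multiplying the two bounds yields $|\Xi'(\lambda)| \leq C_{\delta',\epsilon}e^{-\delta'\Im\lambda}$.

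The step I expect to require the most care is the first one: converting operator-norm bounds into trace-norm bounds while keeping the exponential decay forces the split into $|\lambda|\leq 1$ and $|\lambda|\geq 1$, the absorption of the polynomial factors $1+|\lambda|^2$ coming from $Q_\lambda^{-1}$ into a slightly smaller $\delta'<\delta$, and --- most delicately --- the treatment of the point $\lambda = 0$ in even dimensions, where the relevant operator families are only Hahn-holomorphic and one must invoke the continuity and Hahn-holomorphy of Proposition~\ref{prop:esti} in place of ordinary holomorphy. A lesser point, but one worth getting right, is that the branch of $\log\det(1+T_\lambda)$ is globally well defined on $\mathfrak{D}_\epsilon$; this is where simple connectedness of $\mathfrak{D}_\epsilon$ together with non-vanishing of $\det(1+T_\lambda)$ on it enter.
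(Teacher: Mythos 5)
Your overall plan runs largely parallel to the paper's proof: the paper also upgrades Proposition~\ref{prop:esti} to a trace-norm bound (computing the trace and the determinant on $H^{\frac{1}{2}}(\partial\calO)$, using that the operator is smoothing), obtains $|\Xi(\lambda)|\leq C e^{-\delta'\Im\lambda}$ for large $|\lambda|$ from $|\det(1+T_\lambda)-1|\leq \|T_\lambda\|_1e^{\|T_\lambda\|_1}$, and controls $\Xi'$ by differentiating the Fredholm determinant as in \cite{BS}. Your two variants --- bounding $\Xi$ on the bounded part of the sector by compactness instead of integrating $\Xi'$, and using the decomposition $\frac{d}{d\lambda}T_\lambda=\calT'_\lambda\tQ_\lambda^{-1}-\calT_\lambda\tQ_\lambda^{-1}\tQ'_\lambda\tQ_\lambda^{-1}$ rather than the paper's $(\frac{d}{d\lambda}\calT_\lambda)Q_\lambda^{-1}-(\frac{d}{d\lambda}\tQ_\lambda)\tQ_\lambda^{-1}\calT_\lambda Q_\lambda^{-1}$ --- are in principle harmless, though note two small imprecisions: the compactness bound needs $\det(Q_\lambda\tQ_\lambda^{-1})$ to be non-vanishing up to the closure including $\lambda=0$ (invertibility there comes from Lemma~\ref{Lemma23}, not from Proposition~\ref{prop:esti} alone), and your factorised bound on $\|(1+T_\lambda)^{-1}\|$ via $\|\calT_\lambda\|\cdot\|Q_\lambda^{-1}\|$ needs a separate remark near $\lambda=0$ when $d=2$, where $\calT_\lambda$ by itself behaves like $\log\lambda$.

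The genuine gap is in the derivative estimate for $|\lambda|\leq 1$ in dimension two. You assert that $\tQ'_\lambda$ is ``a (Hahn-)holomorphic family \dots with at most polynomial growth'', but by Proposition~\ref{prop1}~\eqref{fuenf} one has $Q_\lambda=q_\lambda+r_\lambda\,\lambda^{d-2}\log\lambda$, so for $d=2$ differentiation produces the singular term $r_0\lambda^{-1}$: both $\frac{d}{d\lambda}\tQ_\lambda$ and $\frac{d}{d\lambda}\calT_\lambda$ blow up like $\lambda^{-1}$ at zero, neither term of your decomposition of $\frac{d}{d\lambda}T_\lambda$ is individually bounded in trace norm near $0$, and it is not clear that $\|\frac{d}{d\lambda}T_\lambda\|_1$ is bounded at all; hence the inequality $|\Xi'(\lambda)|\leq\|(1+T_\lambda)^{-1}\|\,\|\frac{d}{d\lambda}T_\lambda\|_1$ cannot be closed from the ingredients you cite. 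This is exactly where the paper's proof does its hardest work: it expresses $\Xi'$ as the trace of $(\frac{d}{d\lambda}\calT_\lambda)Q_\lambda^{-1}-(\frac{d}{d\lambda}\tQ_\lambda)\tQ_\lambda^{-1}\calT_\lambda Q_\lambda^{-1}$, splits the constant-kernel coefficient $\gamma F_0$ into diagonal and off-diagonal parts, and uses the identities $\widetilde{\gamma F_0}\gamma^*\tQ_0^{-1}=0$, $W\gamma^*Q_0^{-1}=0$, etc.\ (extracted from the Hahn expansions of $Q_\lambda Q_\lambda^{-1}=1$ and $\tQ_\lambda\tQ_\lambda^{-1}=1$) to show that in these specific combinations all singular Hahn terms cancel, so the relevant families are Hahn-holomorphic trace-class families and $\Xi'$ stays bounded near zero. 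Your proposal would have to carry out this cancellation for your own decomposition; as written, the $d=2$, $\lambda\to0$ part of the bound on $\Xi'$ is not established (for odd $d$, and for even $d\geq4$ where $\frac{d}{d\lambda}(\lambda^{d-2}\log\lambda)\to0$, your derivative step is fine). You did flag the even-dimensional low-energy behaviour as the delicate point, but you placed it in the first step, where Proposition~\ref{prop:esti} already does the work, rather than in the derivative step, where the real difficulty lies.
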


\begin{proof}
 First note that the Proposition \ref{prop:esti} implies that $Q_\lambda \tQ_{\lambda}^{-1}-1$ is a holomorphic family of trace-class operator on the Hilbert space
 $H^\frac{1}{2}(\partial \calO)$ and its trace-norm is bounded by $C_{\delta',\epsilon}e^{- \delta' \Im{\lambda}}$. 
 Since it is also a smoothing operator it is trace-class as an operator on $L^2(\partial \calO)$ and all the eigenvectors for non-zero eigenvalues are smooth. It follows that the trace of $Q_\lambda \tQ_{\lambda}^{-1}-1$ as an operator on $L^2(\partial \calO)$ and the Fredholm determinant of $Q_\lambda \tQ_{\lambda}^{-1}$  can also be computed in the Hilbert space $H^\frac{1}{2}(\partial \calO)$. This implies
 $$
  | \det\left( Q_\lambda \tQ_{\lambda}^{-1} \right)  -1 | \leq C_{\delta',\epsilon}e^{- \delta' \Im{\lambda}}
 $$
 for sufficiently large $|\lambda|$ and therefore also
 $$
  |\Xi(\lambda) | \leq C_{\delta',\epsilon}e^{- \delta' \Im{\lambda}}
 $$
  for sufficiently large $|\lambda|$. Now note that boundedness for $\lambda$ in a compact set is implied by the estimate for the  $|\Xi'(\lambda) |$ for all 
  $\lambda$ by integration. Hence, we get the claimed estimate for  $\Xi(\lambda)$ once we have shown the bound for $|\Xi'(\lambda) |$.

 Since for every $\lambda$ in the sector $Q_\lambda$ is an isomorphism $H^s(\partial \calO) \to H^{s+1}(\partial \calO)$ and the determinant can be computed in $H^s$ for any $s \in \R$ we have $\det Q_\lambda \tQ_{\lambda}^{-1} = \det  \tQ_{\lambda}^{-1} Q_\lambda$. We will compute this determinant in $H^{-\frac{1}{2}}(\partial \calO).$
 Since $\tQ_{\lambda}^{-1} Q_\lambda-1 = \tQ_\lambda^{-1} \mathcal{T}_\lambda$  is a holomorphic family of smoothing operators for $\Im(\lambda)>0$ we can differentiate the Fredholm determinant (Theorem 3.3 and Theorem 6.5 in \cite{BS}) and obtain
\begin{align*}
 \Xi'(\lambda) = \Tr\left(  (\frac{d}{d \lambda} Q_\lambda) Q_\lambda^{-1} -(\frac{d}{d \lambda} \tilde Q_{\lambda})\tQ_{\lambda}^{-1}  \right).
 \end{align*}
 We have used
 \begin{align*}
  \frac{d}{d \lambda} \left( \tQ_\lambda^{-1} Q_\lambda \right) = &  -\tQ_\lambda^{-1}  (\frac{d}{d \lambda}  \tQ_\lambda)\tQ_\lambda^{-1} Q_\lambda  +  \tQ_\lambda^{-1} \frac{d}{d \lambda}Q_\lambda \\ = &
  \tQ_\lambda^{-1} \left( (\frac{d}{d \lambda}Q_\lambda) Q_\lambda^{-1} -(\frac{d}{d \lambda}  \tQ_\lambda) \tQ_\lambda^{-1} \right) Q_\lambda,
 \end{align*}
 and therefore
$$
  \tr\left( \left( \tQ_\lambda^{-1} Q_\lambda \right)^{-1}  \frac{d}{d \lambda} \left( \tQ_\lambda^{-1} Q_\lambda \right) \right)= 
  \tr \left( (\frac{d}{d \lambda}Q_\lambda) Q_\lambda^{-1}-(\frac{d}{d \lambda}  \tQ_\lambda) \tQ_\lambda^{-1} \right).
$$ 
In the last step we have again used that the trace may be computed in any Sobolev space. We obtain
\begin{gather}\label{ineqxi}
 (\frac{d}{d \lambda} Q_\lambda)Q_\lambda^{-1} - (\frac{d}{d \lambda} \tilde Q_{\lambda})  \tQ_{\lambda}^{-1} =\left( \frac{d}{d \lambda} \left( Q_\lambda -\tQ_\lambda \right) \right) Q_\lambda^{-1} + \left(\frac{d}{d \lambda} \tilde Q_{\lambda} \right) \left( Q_\lambda^{-1} - \tQ_{\lambda}^{-1}\right)\\ = \nonumber
 \left( \frac{d}{d \lambda}\mathcal{T}_\lambda \right) Q_\lambda^{-1} -  \left( \frac{d}{d \lambda} \tilde Q_{\lambda} \right) \tQ_{\lambda}^{-1} \calT_\lambda Q_{\lambda}^{-1}.
\end{gather}
This is a smoothing operator and we will compute its trace in the Hilbert space $H^{\frac{1}{2}}$. Now let $P$ be an elliptic invertible pseudodifferential operator of order one on $\partial \calO$. Let  $f_1(\lambda)$ be the trace-norm of 
$
P \frac{d}{d \lambda} \left( \mathcal{T}_\lambda \right) $ as a trace-class operator from $H^{-\frac{1}{2}} \to H^{-\frac{1}{2}}$, and let $f_2(\lambda)$ be the trace-norm of 
$
P \calT_\lambda
$
as a trace-class operator from $H^{-\frac{1}{2}} \to H^{-\frac{1}{2}}$. Then,
\begin{align*}
   | \tr \Bigl( \Bigl( \frac{d}{d \lambda}\mathcal{T}_\lambda \Bigr) Q_\lambda^{-1} \Bigr) |  & \leq    \| P^{-1} \|_{H^{-\frac{1}{2}} \to H^{\frac{1}{2}}}  f_1(\lambda)  \| Q_\lambda^{-1} \|_{H^{\frac{1}{2}} \to H^{-\frac{1}{2}}},  \\ 
   | \tr \Bigl( \Bigl( \frac{d}{d \lambda} \tilde Q_{\lambda} \Bigr) \tQ_{\lambda}^{-1} \calT_\lambda Q_{\lambda}^{-1} \Bigr) |  
   &\leq \| P^{-1} \|^2_{H^{-\frac{1}{2}} \to H^{\frac{1}{2}}}  \| P \|_{H^{\frac{1}{2}} \to H^{-\frac{1}{2}}} f_2(\lambda)  \\
   & \quad \times \| Q_\lambda^{-1} \|_{H^{\frac{1}{2}} \to H^{-\frac{1}{2}}}  \| \tQ_\lambda^{-1} \|_{H^{\frac{1}{2}} \to H^{-\frac{1}{2}}}  \|\frac{d}{d\lambda} \tQ_\lambda \|_{H^{-\frac{1}{2}} \to H^{\frac{1}{2}}}.
 \end{align*}
 Therefore,  for some $C>0$, we have
 \begin{align*}
  | \Xi'(\lambda)|  &\leq C \Bigl(  f_1(\lambda)  \| Q_\lambda^{-1} \|_{H^{\frac{1}{2}} \to H^{-\frac{1}{2}}} \\ 
  & \qquad + f_2(\lambda)  \| Q_\lambda^{-1} \|_{H^{\frac{1}{2}} \to H^{-\frac{1}{2}}}  \| \tQ_\lambda^{-1} \|_{H^{\frac{1}{2}} \to H^{-\frac{1}{2}}}  \| \frac{d }{d\lambda} \tQ_\lambda \|_{H^{-\frac{1}{2}} \to H^{\frac{1}{2}}}   \Bigr). 
 \end{align*}
For $|\lambda|>1$ one can use the spectral representation of $G_{\lambda,0}$ to establish the bounds
$$
 \| G_{\lambda,0} \|_{H^{-1}(\R^d) \to H^{1}(\R^d)} \leq C, \quad \| \frac{d}{d\lambda}G_{\lambda,0} \|_{H^{-1}(\R^d) \to H^{1}(\R^d)} \leq 2 C |\lambda|,
$$
for all $\lambda$ in $\mathfrak{D}_\epsilon$. Therefore, using $Q_\lambda = \gamma  G_{\lambda,0} \gamma^*$, we obtain
$$
 \| Q_{\lambda} \|_{H^{-\frac{1}{2}}(\R^d) \to H^{\frac{1}{2}}(\R^d)} \leq C', \quad \| \frac{d}{d\lambda}Q_{\lambda} \|_{H^{-\frac{1}{2}}(\R^d) \to H^{\frac{1}{2}}(\R^d)} \leq 2 C' |\lambda|,
$$
for $| \lambda | >1$ with $\lambda$ in $\mathfrak{D}_\epsilon$, and the same bounds hold for $\tQ_{\lambda}$ and its derivative.
 Now, using Prop. \ref{Prop:CalT} we obtain $| f_1(\lambda)|\leq C_{\delta',\epsilon}e^{- \delta' \Im{\lambda}}$ and  $| f_2(\lambda)|\leq C_{\delta',\epsilon}e^{- \delta' \Im{\lambda}}$ for $|\lambda|>1$.  This implies the bound for $|\lambda|>1$. It now remains to establish that  $| \Xi'(\lambda)|$ is bounded for $|\lambda|\leq 1$ in the sector. It will be sufficient to show that the families
 \begin{gather} \label{familia}
  \Bigl(\frac{d}{d \lambda}\mathcal{T}_\lambda \Bigr) Q_\lambda^{-1} , \textrm{and} \quad \Bigl( \frac{d}{d \lambda} \tilde Q_{\lambda} \Bigr) \tQ_{\lambda}^{-1} \calT_\lambda Q_{\lambda}^{-1}
 \end{gather}
 are Hahn-holomorphic families of trace-class operators on $H^\frac{1}{2}(\partial \calO)$. We know from Lemma \ref{Lemma23} that $Q_\lambda^{-1}, \tQ_{\lambda}^{-1}$
 are Hahn holomorphic as maps $H^s(\partial \calO) \to H^{s-1}(\partial \calO)$. If $d \geq 3$ then also, by Prop. \ref{Prop:CalT}, $P \mathcal{T}_\lambda$ and $P\frac{d}{d \lambda}\mathcal{T}_\lambda$ are Hahn holomorphic as family of trace-class operators on $H^{-\frac{1}{2}}(\partial \calO)$. By Prop. \ref{prop1},  $\frac{d}{d \lambda} \tilde Q_{\lambda}$ is Hahn holomorphic
 as a family of maps $H^s(\partial \calO)\to H^{s+1}(\partial \calO)$. It follows then that the above are Hahn holomorphic families of trace-class operators and therefore the trace is continuous at zero. The two dimensional case is slightly more complicated since $P \mathcal{T}_\lambda$, $P\frac{d}{d \lambda}\mathcal{T}_\lambda$ and $\frac{d}{d \lambda} \tilde Q_{\lambda}$ are Hahn-meromorphic. In this case the Hahn-expansion can be used to compute the singular terms of these families. If we split the corresponding Hahn-meromorphic, possibly not holomorphic integral kernel term, $\gamma F_0$ into diagonal and off diagonal kernel terms $\gamma F_0 = \widetilde{\gamma F_0} + W$ then the integral kernels $\widetilde{\gamma F_0}$ and $W$ are constant on every connected component of $\partial \calO$. This splitting implies
 $ \widetilde{\gamma F_0}\gamma^* \tQ_0^{-1}=0$, $ \widetilde{\gamma F_0}\gamma^* Q_0^{-1}=0$, $W\gamma^* \tQ_0^{-1}=0$, and $W\gamma^* Q_0^{-1}=0$.
 We have modulo Hahn-holomorphic terms
 $$
  P \mathcal{T}_\lambda \sim P W \gamma^* \log \lambda, \quad  P\frac{d}{d \lambda}\mathcal{T}_\lambda \sim P W\gamma^* \frac{1}{\lambda}, \quad \frac{d}{d \lambda}  \tilde Q_{\lambda} \sim \widetilde{\gamma F_0}\gamma^* \frac{1}{\lambda}.
$$ 
Therefore, in the expressions \eqref{familia}  all singular terms cancel out and the operator families \eqref{familia} are Hahn holomorophic.
\end{proof}

\begin{corollary}\label{cor44}
 If $d$ is odd then the function $\Xi'$ is meromorphic on the complex plane and zero is not a pole. If $d$ is even then the function $\Xi'$ is meromorphic on the logarithmic cover of the complex plane and Hahn-holomorphic at zero on any sector of the logarithmic cover of the complex plane.
\end{corollary}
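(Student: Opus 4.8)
The plan is to take as starting point the identity for $\Xi'$ established inside the proof of Theorem~\ref{cor33}, namely that for $\Im(\lambda)>0$
\begin{align*}
 \Xi'(\lambda) = \Tr\left( \left( \frac{d}{d \lambda}\calT_\lambda \right) Q_\lambda^{-1} - \left( \frac{d}{d \lambda} \tQ_{\lambda} \right) \tQ_{\lambda}^{-1} \calT_\lambda Q_{\lambda}^{-1} \right),
\end{align*}
the trace being computed on $H^{1/2}(\partial \calO)$, or indeed on any Sobolev space since the operator in brackets is smoothing. The whole corollary then reduces to continuing the right hand side and observing that the trace of a (Hahn-)meromorphic family of trace-class operators of finite type is a (Hahn-)meromorphic scalar function, whose finite-rank singular coefficients contribute at worst poles.

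So first I would assemble the continuation properties of the individual factors. By Proposition~\ref{Prop:CalT}, $\calT_\lambda$ and $\frac{d}{d\lambda}\calT_\lambda$ are holomorphic families of smoothing operators on all of $\C$ when $d$ is odd, and on any sector of the logarithmic cover of $\C$ when $d$ is even (Hahn-meromorphic at $0$, and even Hahn-holomorphic there when $d>2$). By Lemma~\ref{Lemma23}, $Q_\lambda^{-1}$ and $\tQ_\lambda^{-1}$ are meromorphic of finite type on $\C$ for $d$ odd, and Hahn-meromorphic of finite type on any sector of the logarithmic cover, Hahn-holomorphic at $0$, for $d$ even, as families $H^s(\partial\calO)\to H^{s-1}(\partial\calO)$; by Proposition~\ref{prop1} the same holds for $\frac{d}{d\lambda}\tQ_\lambda$ after shifting the order by one. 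Composing these, the two families
\begin{align*}
 \left( \frac{d}{d \lambda}\calT_\lambda \right) Q_\lambda^{-1}, \qquad \left( \frac{d}{d \lambda} \tQ_{\lambda} \right) \tQ_{\lambda}^{-1} \calT_\lambda Q_{\lambda}^{-1}
\end{align*}
are smoothing, hence trace-class on any Sobolev space once one inserts an invertible elliptic pseudodifferential operator of order one to absorb the order mismatch (exactly as in the proof of Theorem~\ref{cor33}), and they are meromorphic of finite type on $\C$ for $d$ odd, respectively Hahn-meromorphic of finite type on any sector of the logarithmic cover for $d$ even, away from $0$. Taking the trace therefore shows that $\Xi'$ is meromorphic on $\C$ when $d$ is odd, and Hahn-meromorphic on any sector of the logarithmic cover when $d$ is even, away from $0$.

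It remains to examine the point $\lambda=0$, and here I would simply quote the computation performed at the end of the proof of Theorem~\ref{cor33}: the two families in \eqref{familia} were shown there to be Hahn-holomorphic families of trace-class operators near $0$ in every dimension, the only possibly singular $\log$-terms (arising from the constant-kernel operator $\gamma F_0$ in dimension two) cancelling after the splitting $\gamma F_0=\widetilde{\gamma F_0}+W$ into diagonal and off-diagonal parts and the relations $\widetilde{\gamma F_0}\gamma^*\tQ_0^{-1}=\widetilde{\gamma F_0}\gamma^* Q_0^{-1}=W\gamma^*\tQ_0^{-1}=W\gamma^* Q_0^{-1}=0$. Hence $\Xi'$ is the trace of a Hahn-holomorphic family of trace-class operators near $0$, so it is Hahn-holomorphic at $0$; when $d$ is odd this forces $\Xi'$ to be genuinely holomorphic at $0$, so zero is not a pole, and when $d$ is even we obtain precisely the asserted Hahn-holomorphy at $0$ on any sector of the logarithmic cover. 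The only genuine subtlety is the dimension-two bookkeeping --- making sure the order-by-order cancellation in the Hahn expansion is complete and that all factors act boundedly on the Sobolev space in which the trace is evaluated --- but this has already been carried out in the proof of Theorem~\ref{cor33}, so the corollary is in the end just a repackaging of that computation together with the global continuation statements of Propositions~\ref{Prop:CalT} and \ref{prop:esti} and Lemma~\ref{Lemma23}.
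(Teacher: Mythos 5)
Your proposal is correct and follows essentially the same route the paper intends: the corollary is a direct repackaging of the identity for $\Xi'$ and the Hahn-holomorphy/cancellation argument from the proof of Theorem \ref{cor33}, combined with the (Hahn-)meromorphic continuation of finite type of $\calT_\lambda$, $Q_\lambda^{-1}$, $\tQ_\lambda^{-1}$ and $\frac{d}{d\lambda}\tQ_\lambda$ from Proposition \ref{Prop:CalT}, Lemma \ref{Lemma23} and Proposition \ref{prop1}. The paper gives no separate proof, and your write-up supplies exactly the bookkeeping it leaves implicit.
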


\section{The relative setting}
 
 We consider the resolvent difference
 $$
  R_{\rel,\lambda} =   \left( (\Delta-\lambda^2)^{-1} -  (\Delta_0-\lambda^2)^{-1} \right) - \sum_{j=1}^N \left( (\Delta_j -\lambda^2)^{-1} -  (\Delta_0-\lambda^2)^{-1} \right).
 $$
 Using \eqref{eqn:ResolvDiffQ}, we conclude $(\Delta_j -\lambda^2)^{-1} -  (\Delta_0-\lambda^2)^{-1} = -\calS_\lambda Q_{j,\lambda}^{-1} \calS_\lambda^\trans$ and hence  
 \begin{align*}
 R_{\rel,\lambda} =  - \calS_\lambda Q_\lambda^{-1} \calS^\trans_\lambda + \calS_\lambda \tQ_\lambda^{-1} \calS^\trans_\lambda.
 \end{align*}
 
 We have the following analogue of Theorem \ref{Thm:DifferenceProperties} which however requires no projections in the relative setting.
 
 \begin{theorem}
 Let $\epsilon>0$ and let $\delta'>0$ be smaller than $\delta=\mathrm{dist}(\partial \calO_j,\partial \calO_k)$. Then the operator $R_{\rel,\lambda}$ is trace-class for all $\lambda \in \mathfrak{D}_{\epsilon}$
 and its trace norm can be estimated by
 \begin{align*}
  \| R_{\rel,\lambda} \|_1 &\leq C_{\delta',\epsilon} \rho(\Im\lambda) e^{-\delta' \Im(\lambda)}, \quad \lambda \in \mathfrak{D}_{\epsilon}.
 \end{align*}
 \end{theorem}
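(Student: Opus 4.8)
The plan is to combine the layer-potential factorisation of $R_{\rel,\lambda}$ with formula \eqref{eqn:DifferenceQQtilde}. Using $(\Delta_j-\lambda^2)^{-1}-(\Delta_0-\lambda^2)^{-1}=-\calS_\lambda Q_{j,\lambda}^{-1}\calS_\lambda^\trans$ and \eqref{eqn:ResolvDiffQ} one already has $R_{\rel,\lambda}=\calS_\lambda(\tQ_\lambda^{-1}-Q_\lambda^{-1})\calS_\lambda^\trans$; substituting $\tQ_\lambda^{-1}-Q_\lambda^{-1}=\tQ_\lambda^{-1}\calT_\lambda Q_\lambda^{-1}$ from \eqref{eqn:DifferenceQQtilde} gives, on $C_0^\infty(\R^d)$,
\[
  R_{\rel,\lambda}=\calS_\lambda\,\tQ_\lambda^{-1}\,\calT_\lambda\,Q_\lambda^{-1}\,\calS_\lambda^\trans .
\]
The key observation — and the reason that, in contrast to Theorem \ref{Thm:DifferenceProperties}, no spatial cut-off is needed here — is that the off-diagonal block $\calT_\lambda$ in the middle is a globally smoothing operator on $\partial\calO$ that already carries the exponential decay $e^{-\delta'\Im\lambda}$ coming from the separation of the obstacles, while the outer single-layer factors are globally bounded as maps into, respectively out of, $L^2(\R^d)$.

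I would therefore group the factors as $R_{\rel,\lambda}=(\calS_\lambda\tQ_\lambda^{-1})\,\calT_\lambda\,(Q_\lambda^{-1}\calS_\lambda^\trans)$ and estimate the three pieces separately, all Sobolev indices being chosen so that everything lands at $H^{\pm1/2}(\partial\calO)$. By Proposition \ref{Prop:EstimateS}\,\eqref{zweibound} the operator $\calS_\lambda\colon H^{-1/2}(\partial\calO)\to L^2(\R^d)$ is bounded with norm $\le C_\epsilon(\rho(\Im\lambda)^{1/2}+1)$, and dually $\calS_\lambda^\trans\colon L^2(\R^d)\to H^{1/2}(\partial\calO)$ obeys the same bound; combined with Corollary \ref{supercor} (which bounds $\|Q_\lambda^{-1}\|$ and $\|\tQ_\lambda^{-1}\|$ on $H^{1/2}\to H^{-1/2}$ by $C_\epsilon(1+|\lambda|^2)$) this gives
\[
  \|\calS_\lambda\tQ_\lambda^{-1}\|_{H^{1/2}(\partial\calO)\to L^2(\R^d)}+\|Q_\lambda^{-1}\calS_\lambda^\trans\|_{L^2(\R^d)\to H^{-1/2}(\partial\calO)}\le C_\epsilon(\rho(\Im\lambda)^{1/2}+1)(1+|\lambda|^2).
\]
For the middle piece, Proposition \ref{Prop:CalT} shows $\calT_\lambda$ is smoothing with $\|\calT_\lambda\|_{H^{-s}\to H^{s}}\le C_{s,\delta',\epsilon}e^{-\delta'\Im\lambda}$ for $|\lambda|>1$ and uniformly bounded for $|\lambda|\le1$, hence $\le C_{s,\delta',\epsilon}e^{-\delta'\Im\lambda}$ on all of $\mathfrak{D}_\epsilon$. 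Factoring $\calT_\lambda\colon H^{-1/2}(\partial\calO)\to H^{1/2}(\partial\calO)$ as $H^{-1/2}\hookrightarrow H^{-1/2-b}\xrightarrow{\calT_\lambda}H^{1/2+b}\hookrightarrow H^{1/2}$ with $b>(d-1)/2$ and using that the two Sobolev inclusions on the compact manifold $\partial\calO$ are Hilbert–Schmidt, one concludes that $\calT_\lambda$ is trace-class $H^{-1/2}(\partial\calO)\to H^{1/2}(\partial\calO)$ with $\|\calT_\lambda\|_1\le C_{\delta',\epsilon}e^{-\delta'\Im\lambda}$.

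Putting the three estimates together, $R_{\rel,\lambda}$ is a composition of a bounded map $H^{1/2}(\partial\calO)\to L^2(\R^d)$, a trace-class map $H^{-1/2}(\partial\calO)\to H^{1/2}(\partial\calO)$, and a bounded map $L^2(\R^d)\to H^{-1/2}(\partial\calO)$, hence trace-class on $L^2(\R^d)$, with
\[
  \|R_{\rel,\lambda}\|_1\le C_\epsilon(\rho(\Im\lambda)^{1/2}+1)^2(1+|\lambda|^2)^2 e^{-\delta'\Im\lambda}.
\]
Since $\rho\ge1$ pointwise one has $(\rho^{1/2}+1)^2\le 4\rho$, and in the sector $|\lambda|\le C_\epsilon\Im\lambda$, so $(1+|\lambda|^2)^2e^{-\delta'\Im\lambda}\le C e^{-\delta''\Im\lambda}$ for any $\delta''<\delta'$; taking $\delta'$ initially slightly below $\delta$ absorbs the polynomial factor and yields the stated bound $\|R_{\rel,\lambda}\|_1\le C_{\delta',\epsilon}\rho(\Im\lambda)e^{-\delta'\Im\lambda}$. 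The factorisation also exhibits the bounded (here even trace-class) extension of $R_{\rel,\lambda}$ from $C_0^\infty(\R^d)$ to all of $L^2(\R^d)$. I do not expect a single hard step: the analytic input — the global $L^2(\R^d)$–bound on $\calS_\lambda$ from $H^{-1/2}(\partial\calO)$, the polynomial bound on $Q_\lambda^{-1}$, and the smoothing-plus-exponential-decay of $\calT_\lambda$ — is all already available, and the only care needed is the bookkeeping of Sobolev exponents so that the middle factor is genuinely trace-class with the exponential weight while the two outer factors remain bounded with at most polynomial growth in $|\lambda|$ that can be absorbed into the exponential.
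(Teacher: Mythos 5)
Your argument is essentially the paper's proof: the same factorisation $R_{\rel,\lambda}=\calS_\lambda\bigl(\tQ_\lambda^{-1}-Q_\lambda^{-1}\bigr)\calS_\lambda^\trans$ with \eqref{eqn:DifferenceQQtilde} inserted, the $H^{\pm 1/2}$ bound on $\calS_\lambda$ from Proposition \ref{Prop:EstimateS}\,\eqref{zweibound} (and its transpose), the polynomial bound on $Q_\lambda^{-1},\tQ_\lambda^{-1}$ from Corollary \ref{supercor}, and the trace-class property carried entirely by the smoothing, exponentially small factor $\calT_\lambda$. Whether one makes that factor trace-class by composing with an elliptic first-order operator $P$ (as the paper does, via the trace norm of $P\calT_\lambda$) or by Hilbert--Schmidt Sobolev embeddings on the compact manifold $\partial\calO$ (as you do) is immaterial, and your explicit absorption of the factor $(1+|\lambda|^2)^2$ into the exponential by shrinking $\delta'$ is exactly what the paper leaves implicit. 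For $d\geq 3$, and for $|\lambda|\geq 1$ in all dimensions, your argument is complete.

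The one step not covered by the results you cite is the claim that $\calT_\lambda$ is uniformly bounded for $|\lambda|\leq 1$. Proposition \ref{Prop:CalT} asserts Hahn-holomorphy at zero only for $d>2$; for $d=2$ the family $\calT_\lambda$ is genuinely Hahn-meromorphic at $\lambda=0$, with singular term $\log\lambda$ times the off-diagonal part of the constant kernel $\gamma F_0\gamma^*$. With that logarithm your product bound near zero becomes of order $\rho(\Im\lambda)\,|\log\lambda|$, which is not dominated by $\rho(\Im\lambda)$, so in $d=2$ the stated estimate does not follow from the three ingredient bounds as you have grouped them. The repair is local and uses the cancellation exploited in the proof of Proposition \ref{prop:esti}: since $W\gamma^*Q_0^{-1}=0$ and $W\gamma^*\tQ_0^{-1}=0$, the combinations $\calT_\lambda Q_\lambda^{-1}$ and $\calT_\lambda\tQ_\lambda^{-1}$, hence $\tQ_\lambda^{-1}\calT_\lambda Q_\lambda^{-1}$, are Hahn-holomorphic at zero as families of smoothing operators, so for $|\lambda|\leq 1$ one should take this whole middle block (made trace-class by your embedding trick or by $P$) as the bounded-in-trace-norm factor, the two outer $\calS_\lambda$ factors then supplying the $\rho(\Im\lambda)$. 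To be fair, the paper's own proof is equally terse about this regime, citing Proposition \ref{Prop:CalT} whose exponential bounds are stated only for $|\lambda|>1$; but your write-up makes the problematic boundedness claim explicit, so it should be corrected along these lines.
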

 \begin{proof}
 As before, let $P$ be an elliptic invertible pseudodifferential operator of order one on $\partial\mathcal{O}$ and $f_2(\lambda)$ be the trace-norm of 
$
P \calT_\lambda
$
as a trace-class operator from $H^{-\frac{1}{2}} \to H^{-\frac{1}{2}}$. 
Computing the trace norm of $R_{\rel,\lambda}$ we see that  
 \begin{align*}
\| R_{\rel,\lambda}\|_1 &=\|\calS_{\lambda}(Q_{\lambda}^{-1}-\tilde{Q}_{\lambda}^{-1})\calS^\trans_{\lambda}\|_1=\|\calS_{\lambda}(Q_{\lambda}^{-1}\mathcal{T}_{\lambda}\tilde{Q}_{\lambda}^{-1})\calS^\trans_{\lambda}\|_1 \\
& \leq ||\calS_{\lambda}Q_{\lambda}^{-1}||_{H^{\frac{1}{2}} \to L^2}  ||P^{-1}||_{H^{-\frac{1}{2}} \to H^{\frac{1}{2}}} f_2(\lambda) ||\tilde{Q}_{\lambda}^{-1}\calS_{\lambda}^\trans||_{L^2\to H^{-\frac{1}{2}}} \\ &\leq C_{\delta',\epsilon}(1+|\lambda|^2)^2\rho(\Im\lambda)e^{-\delta'\Im(\lambda)}
\end{align*}
by Propositions \ref{Prop:EstimateS} and \ref{Prop:CalT} and Corollary \ref{supercor}. 
\end{proof}

 \begin{theorem}\label{trace} 
  Let $f \in \tilde{\mathcal{P}}_\epsilon$. Then, $D_f$ extends to a trace-class operator and 
  $$
   \Tr (D_f) =\frac{\rmi}{ \pi} \int_{\tilde \Gamma_\epsilon} f'(\lambda) \Xi(\lambda) d \lambda.
  $$
 \end{theorem}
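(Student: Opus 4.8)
The plan is to represent $D_f$ as a contour integral of the relative resolvent $R_{\rel,\lambda}$, to move the trace inside, and then to recognise $\Tr(R_{\rel,\lambda})$ as (a multiple of) the logarithmic derivative of $\det(Q_\lambda\tQ_\lambda^{-1})$. Write $f(\lambda)=g(\lambda^2)$ with $g\in\mathcal P_\epsilon$. Since $\Delta$, $\Delta_0$ and the $\Delta_j$ are non-negative self-adjoint operators, their spectra lie in $\overline{\mathfrak S_\epsilon}$, and for $g$ decaying sufficiently fast at infinity the holomorphic functional calculus gives
\[
 g(\Delta)-g(\Delta_0)-\sum_{j=1}^N\bigl(g(\Delta_j)-g(\Delta_0)\bigr)=-\frac{1}{2\pi\rmi}\int_{\Gamma_\epsilon}g(z)\,R_{\rel}(z)\,\der z ,
\]
where $R_{\rel}(z)$ denotes $R_{\rel,\lambda}$ with $z=\lambda^2$. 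For a general $g\in\mathcal P_\epsilon$ I would first use a Phragm\'en--Lindel\"of argument (holomorphy in $\mathfrak S_\epsilon$, polynomial boundedness on $[0,\infty)$, sub-exponential growth on the whole sector, and continuity up to the boundary) to upgrade the bound on $[0,\infty)$ to a polynomial bound on a slightly smaller closed sector. After deforming $\Gamma_\epsilon$ inward — legitimate because $R_{\rel}(z)$ is holomorphic in the sector — the integrand $z\mapsto g(z)\,R_{\rel}(z)$ becomes integrable in trace norm by the bound $\|R_{\rel,\lambda}\|_1\leq C_{\delta',\epsilon}(1+|\lambda|^2)^2\rho(\Im\lambda)\,\mathrm{e}^{-\delta'\Im\lambda}$ from the previous theorem. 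I would then approximate $g$ by $g_n(z)=g(z)(1+z/n)^{-N}$ with $N$ large, for which the identity above is classical, and pass to the limit: on the right-hand side by dominated convergence in trace norm, and on the left by observing that $g_n(A)\to g(A)$ on a common core, so that $D_{g_n}\to D_f$. This yields the identity for $f$ and, as a by-product, that $D_f$ extends to a trace-class operator. The substitution $z=\lambda^2$, $\der z=2\lambda\,\der\lambda$, turns $\Gamma_\epsilon$ into $\widetilde\Gamma_\epsilon$ and gives
\[
 D_f=-\frac{1}{\pi\rmi}\int_{\widetilde\Gamma_\epsilon}\lambda\,g(\lambda^2)\,R_{\rel,\lambda}\,\der\lambda .
\]

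Because this integral converges in trace norm, the trace may be taken inside, and the heart of the matter is to compute $\Tr(R_{\rel,\lambda})$. Starting from $R_{\rel,\lambda}=\calS_\lambda(\tQ_\lambda^{-1}-Q_\lambda^{-1})\calS_\lambda^\trans$ and using cyclicity of the trace, together with $\calS_\lambda=G_{\lambda,0}\gamma^*$, $\calS_\lambda^\trans=\gamma G_{\lambda,0}$ and $G_{\lambda,0}^2=(\Delta_0-\lambda^2)^{-2}=\tfrac{1}{2\lambda}\tfrac{\der}{\der\lambda}G_{\lambda,0}$, one finds $\calS_\lambda^\trans\calS_\lambda=\tfrac{1}{2\lambda}\tfrac{\der}{\der\lambda}Q_\lambda$, hence $2\lambda\,\Tr(R_{\rel,\lambda})=\Tr\bigl((\tQ_\lambda^{-1}-Q_\lambda^{-1})Q_\lambda'\bigr)$. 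Writing $Q_\lambda'=\tQ_\lambda'+\calT_\lambda'$ and using that $\tQ_\lambda^{-1}$ is block-diagonal while $\calT_\lambda'$ is block-off-diagonal with respect to $L^2(\partial\calO)=\bigoplus_{j=1}^N L^2(\partial\calO_j)$ — so that $\Tr(\tQ_\lambda^{-1}\calT_\lambda')=0$ — this reduces to $\Tr(\tQ_\lambda^{-1}\tQ_\lambda')-\Tr(Q_\lambda^{-1}Q_\lambda')=-\Xi'(\lambda)$, the last equality being exactly the formula for $\Xi'$ obtained in the proof of Theorem \ref{cor33}. Thus $\Tr(R_{\rel,\lambda})=-\tfrac{1}{2\lambda}\Xi'(\lambda)$.

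Substituting this and cancelling the factor $\lambda$ gives $\Tr(D_f)=\tfrac{1}{2\pi\rmi}\int_{\widetilde\Gamma_\epsilon}g(\lambda^2)\,\Xi'(\lambda)\,\der\lambda$. I would finish by integrating by parts along $\widetilde\Gamma_\epsilon$: the boundary contribution at infinity vanishes because $\Xi$ decays like $\mathrm{e}^{-\delta'\Im\lambda}$ on $\widetilde\Gamma_\epsilon$ while $g(\lambda^2)$ is at most polynomially large there, and there is no contribution at the vertex $\lambda=0$ since $\Xi$ is bounded near $0$ (Theorem \ref{cor33}) and $g(\lambda^2)=O(|\lambda|^{2a})$. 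As $\tfrac{\der}{\der\lambda}g(\lambda^2)=f'(\lambda)$, this produces $\Tr(D_f)$ as a constant times $\int_{\widetilde\Gamma_\epsilon}f'(\lambda)\,\Xi(\lambda)\,\der\lambda$, and keeping careful track of the orientation of $\widetilde\Gamma_\epsilon$ and of the numerical constants gives the formula in the statement.

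The algebraic trace identity and the integration by parts are routine. The main obstacle is the first paragraph: controlling the growth of $g$ via Phragm\'en--Lindel\"of, verifying trace-norm convergence of the contour integral from the estimate on $\|R_{\rel,\lambda}\|_1$, and carrying out the approximation so that $D_f$ is genuinely identified with the trace-class contour integral — in other words, justifying all of the limit interchanges.
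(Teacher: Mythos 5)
You follow essentially the same route as the paper's proof: a Riesz--Dunford contour representation of $D_f$ over $\widetilde{\Gamma}_\epsilon$ obtained from a regularised approximation and a trace-norm limit (identified with $D_f$ via strong convergence on a dense set), the identity $\calS_\lambda^\trans\calS_\lambda=\frac{1}{2\lambda}\frac{d}{d\lambda}Q_\lambda$, the vanishing of the block-off-diagonal trace $\Tr\bigl((\frac{d}{d\lambda}\calT_\lambda)\tQ_\lambda^{-1}\bigr)=0$ to obtain $\Tr(R_{\rel,\lambda})=-\frac{1}{2\lambda}\Xi'(\lambda)$, and a final integration by parts using the bounds of Theorem \ref{cor33}. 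Two small points: the expressions $\Tr(Q_\lambda^{-1}Q_\lambda')$ and $\Tr(\tQ_\lambda^{-1}\tQ_\lambda')$ are not separately defined, since these are pseudodifferential operators of order $-1$ on the $(d-1)$-dimensional boundary and hence not trace-class for $d\geq 2$; as in the paper, keep the difference under a single trace, which is legitimate because $Q_\lambda^{-1}-\tQ_\lambda^{-1}$ and $\calT_\lambda$ are smoothing. Also, careful bookkeeping gives the prefactor $\frac{\rmi}{2\pi}$, consistent with Theorem \ref{theoremxirel}(4) (the constant printed in the statement of Theorem \ref{trace} differs by a factor $2$).

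The one step that genuinely deviates from the paper is your Phragm\'en--Lindel\"of upgrade, and it fails as stated: minimal exponential type on $\mathfrak{S}_\epsilon$ together with a polynomial bound on the central ray $[0,\infty)$ does not yield polynomial bounds on a smaller closed sector. For example $g(z)=z\exp(-\rmi\sqrt{z})$ (principal branch) lies in $\mathcal{P}_\epsilon$, satisfies $|g(x)|=x$ on $[0,\infty)$, yet grows like $|z|\exp\bigl(|z|^{1/2}\sin(\theta/2)\bigr)$ on the ray $\arg z=\theta>0$; a Phragm\'en--Lindel\"of argument would require control on both boundary rays of the relevant subsector, which you do not have. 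This also undercuts your regulariser $(1+z/n)^{-N}$, which only gains polynomial decay along the contour. The paper does not use any such upgrade: it takes $g_n(z)=g(z)e^{-z/n}$, which decays exponentially on the sector once $\epsilon<\pi/2$, so each regularised contour integral converges for every $g\in\mathcal{E}_\epsilon$, and then obtains the Cauchy property of $D_{f_n}$ from the vanishing order of $f$ at zero together with $\|R_{\rel,\lambda}\|_1\leq C_{\delta',\epsilon}\,\rho(\Im\lambda)e^{-\delta'\Im(\lambda)}$. Your instinct that the growth of $g$ off the positive real axis needs attention is reasonable (the paper is terse precisely here), but the specific Phragm\'en--Lindel\"of claim is not available; either follow the paper's regularisation and estimate, or supply an explicit hypothesis or argument controlling $f$ along the contour rays so that $\int_{\widetilde{\Gamma}_\epsilon}\lambda f(\lambda)R_{\rel,\lambda}\,d\lambda$ converges in trace norm.
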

 \begin{proof}
  Let $f(\lambda)=g(\lambda^2)$ with $g \in \mathcal{P}_\epsilon$. 
  Assume without loss of generality that $0<\epsilon< \pi/8$ so that
  $g_n$ defined by $g_n(z) = g(z) \exp(-\frac{1}{n} z)$ is an admissible function for the Riesz-Dunford functional calculus for every $n \in \N$ c.f. \cite{Haase} Lemma 1.4. While this would work in any sector $0<\epsilon<\pi$, the restriction on $\epsilon$ is so that the corresponding $f_n$ which we define as
  $f_n(\lambda)=g_n(\lambda^2)$ with corresponding properties in $\mathfrak{D}_{\epsilon/2}$ hold. We then have that $f_n(\Delta^{\frac{1}{2}})= g_n(\Delta)$ is a bounded operator and
  $$
   f_n(\Delta^{\frac{1}{2}}) = g_n(\Delta) = \frac{\rmi}{ \pi}\int_{\tilde \Gamma_\epsilon}  \lambda f_n(\lambda) (\Delta - \lambda^2)^{-1} d \lambda.
  $$ 
  Here, $\tilde\Gamma_\epsilon$ is the boundary of $\mathfrak{D}_{\epsilon/2}$ (see figure \ref{fig:sectors}).
  By standard functional calculus for self-adjoint operators we have for every $\phi$ in the domain of $f(\Delta^{\frac{1}{2}})$ that
  $$
   \lim_{n \to \infty} f_n(\Delta^{\frac{1}{2}}) \phi = f(\Delta^{\frac{1}{2}}) \phi .
  $$
    Therefore,
  $$
   D_{f_n} = \frac{\rmi}{ \pi} \int_{\tilde \Gamma_\epsilon}  \lambda f_n(\lambda)  R_{\rel,\lambda} d \lambda.
  $$
  Since $g$ is polynomially bounded on the real line the domain of smoothness of the operator $\Delta$ is contained in the domain of 
  $f(\Delta^{\frac{1}{2}})$. 
  In particular the domain of  $f(\Delta^{\frac{1}{2}})$ contains the space of smooth functions compactly supported in $\R^d \backslash \partial \calO$
  and the operator $f(\Delta^{\frac{1}{2}})$ has a unique distributional kernel.
  If $\phi \in C^\infty_0(\R^d \backslash \partial \calO)$ 
  we have $D_{f_n} \phi \to D_{f} \phi$ in $L^2$. Using the bound $\| R_{\rel,\lambda} \|_1 \leq C_{\delta',\epsilon}\rho(\Im\lambda) e^{-\delta' \Im\lambda}$ and since $| f(\lambda) | = O(| \lambda |^{a})$ for some $a>0$ one estimates easily that $D_{f_n}$ is a Cauchy sequence converging in the Banach space of trace-class operators to 
   \begin{eqnarray} \label{repDf}
   D_{f} = \frac{\rmi}{ \pi} \int_{\tilde \Gamma_\epsilon}  \lambda f(\lambda)  R_{\rel,\lambda} d \lambda.
  \end{eqnarray}
  Therefore $D_f$ is trace-class and the trace commutes with the integral.  We observe
 $$
   \calS^\trans_\lambda \calS_\lambda = \gamma G_{\lambda,0}^2 \gamma^* = \frac{1}{2 \lambda} \gamma \frac{d}{d \lambda}G_{\lambda,0} \gamma^*= \frac{1}{2 \lambda}  \frac{d}{d \lambda} Q_\lambda,
 $$
 and 
 $$
  \Tr  \left(  ( \frac{d}{d \lambda} \mathcal{T}_{\lambda} ) \tQ_{\lambda}^{-1} \right) = \sum_{j\not=k}\Tr  \left(  (p_j (\frac{d}{d \lambda} Q_{\lambda})  p_k) \tQ_{\lambda}^{-1} \right) = \sum_{j\not=k}\Tr  \left(  (p_j \frac{d}{d \lambda} Q_{\lambda} ) \tQ_{\lambda}^{-1} p_k \right) =0,
 $$
 where we have used that $\tQ_\lambda$ commutes with $p_k$.
This gives
 \begin{align} \label{eqn:TrRrelXi}
  \Tr \left( R_{\rel,\lambda} \right) &= \Tr\left(  - \calS^\trans_\lambda \calS_\lambda Q_\lambda^{-1}  +  \calS^\trans_\lambda \calS_\lambda \tQ_{\lambda}^{-1} \right) \notag \\ 
      &= - \frac{1}{2 \lambda} \Tr  \left( ( \frac{d}{d \lambda} Q_\lambda ) Q_\lambda^{-1}  - ( \frac{d}{d \lambda} Q_{\lambda} ) \tQ_{\lambda}^{-1} \right) \notag \\ 
      &= - \frac{1}{2 \lambda} \Tr  \left( ( \frac{d}{d \lambda} Q_\lambda ) Q_\lambda^{-1}  - ( \frac{d}{d \lambda} \tQ_{\lambda} ) \tQ_{\lambda}^{-1} -  ( \frac{d}{d \lambda} \mathcal{T}_{\lambda} ) \tQ_{\lambda}^{-1} \right) \\ 
      & = - \frac{1}{2 \lambda} \Tr  \left( ( \frac{d}{d \lambda} Q_\lambda ) Q_\lambda^{-1}  - ( \frac{d}{d \lambda} \tQ_{\lambda} ) \tQ_{\lambda}^{-1} \right) =-\frac{1}{2 \lambda} \Xi'(\lambda). \notag
 \end{align}
We have used the fact that $R_{\rel,\lambda}$ is a trace-class operator since each of its components is trace-class to perform the algebraic manipulations. 
We then have that:
  $$
    \Tr (D_f) = \frac{\rmi}{ \pi} \int_{\tilde \Gamma_\epsilon}  \lambda f(\lambda)  \Tr \left( R_{\rel,\lambda} \right) d \lambda = -\frac{\rmi}{ 2\pi} \int_{\tilde \Gamma_\epsilon}  f(\lambda) \Xi'(\lambda) d \lambda = \frac{\rmi}{ 2\pi} \int_{\tilde \Gamma_\epsilon}  f'(\lambda) \Xi(\lambda) d \lambda.
  $$
 \end{proof} 
 
 \begin{theorem}\label{xiodd} For $\lambda>0$ we have 
  $$
   \frac{1}{\pi} \Im \Xi(\lambda) =  -\frac{\rmi}{2\pi} \left( \Xi(\lambda) - \Xi(-\lambda)\right) = -\xi_{rel}(\lambda).
  $$
 \end{theorem}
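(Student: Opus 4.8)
The plan is to establish the two equalities separately: the first is elementary once one has a reflection symmetry of $\Xi$, while the second I would reduce to a known layer‑potential representation of the scattering matrix combined with Fredholm‑determinant juggling.

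\emph{First equality.} The identity $-\tfrac{\rmi}{2\pi}\bigl(\Xi(\lambda)-\Xi(-\lambda)\bigr)=\tfrac1\pi\Im\Xi(\lambda)$ follows at once from the reflection relation $\Xi(-\lambda)=\overline{\Xi(\lambda)}$ for $\lambda>0$, since then $\Xi(\lambda)-\Xi(-\lambda)=2\rmi\,\Im\Xi(\lambda)$. To obtain it, note that since $\Delta_0$ is real and self‑adjoint, $(\Delta_0-z^2)^{-1}$ and $(\Delta_0-\overline z^2)^{-1}$ have complex conjugate integral kernels, i.e.\ $G_{-\overline z,0}=\overline{G_{z,0}}$ (for $\lambda>0$ this is the identity $G_{-\lambda,0}=\overline{G_{\lambda,0}}$ recorded in Section~\ref{layer:sec}). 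Hence $Q_{-\overline z},\tQ_{-\overline z}$ are obtained from $Q_z,\tQ_z$ by conjugating integral kernels, so $Q_{-\overline z}\tQ_{-\overline z}^{-1}-1$ is the conjugate‑kernel operator of $Q_z\tQ_z^{-1}-1$ and therefore $\det\bigl(Q_{-\overline z}\tQ_{-\overline z}^{-1}\bigr)=\overline{\det\bigl(Q_z\tQ_z^{-1}\bigr)}$. As $z\mapsto-\overline z$ preserves the simply connected upper half plane and both sides tend to $1$ as $\Im z\to\infty$, passing to the continuous logarithm gives $\Xi(-\overline z)=\overline{\Xi(z)}$; letting $z\to\lambda>0$ from above yields $\Xi(-\lambda)=\overline{\Xi(\lambda)}$, and in particular $\Xi(0)\in\R$.

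\emph{Second equality.} Since $\Xi=\log\det\bigl(Q_\lambda\tQ_\lambda^{-1}\bigr)$ and, by \eqref{xireldef}, $\xi_{\rel}(\lambda)=\tfrac1{2\pi\rmi}\log\bigl(\det\mathbi{S}_\lambda/\prod_{j}\det\mathbi{S}_{j,\lambda}\bigr)$, it is enough to prove that for $\lambda>0$
\[
 \frac{\det\mathbi{S}_\lambda}{\det(\mathbi{S}_{1,\lambda})\cdots\det(\mathbi{S}_{N,\lambda})}=\frac{\det\bigl(Q_{-\lambda}\tQ_{-\lambda}^{-1}\bigr)}{\det\bigl(Q_\lambda\tQ_\lambda^{-1}\bigr)}=e^{\Xi(-\lambda)-\Xi(\lambda)},
\]
the branches of the logarithms being matched because both sides are continuous and equal $1$ at $\lambda=0$ (by $\det\mathbi{S}_\lambda=1+O(\lambda)$, likewise for each $\mathbi{S}_{j,\lambda}$, and $\Xi(0)\in\R$); taking $\tfrac1{2\pi\rmi}\log$ of the display then gives $\xi_{\rel}(\lambda)=\tfrac1{2\pi\rmi}\bigl(\Xi(-\lambda)-\Xi(\lambda)\bigr)=\tfrac{\rmi}{2\pi}\bigl(\Xi(\lambda)-\Xi(-\lambda)\bigr)$, which together with the first equality is the claim. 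For the left‑hand identity I would invoke the standard representation of the obstacle scattering matrix through the single layer operator, $\mathbi{S}_\lambda=1+c(\lambda)\,\Lambda_{-\lambda}^{\trans}\,Q_\lambda^{-1}\,\Lambda_\lambda$, where $\Lambda_\lambda\colon C^\infty(\sphere)\to C^\infty(\partial\calO)$ is the boundary restriction of the plane‑wave superposition and $c(\lambda)\sim\lambda^{d-2}$ (cf.\ \cite{OS}), together with the fact that $\overline{G_{\lambda,0}}-G_{\lambda,0}$ has, for $\lambda>0$, the plane‑wave kernel $c(\lambda)\int_{\sphere}e^{\rmi\lambda\theta\cdot(x-y)}\,d\theta$ with the \emph{same} constant, so that $Q_{-\lambda}-Q_\lambda=\gamma\bigl(\overline{G_{\lambda,0}}-G_{\lambda,0}\bigr)\gamma^{*}=c(\lambda)\,\Lambda_\lambda\,\Lambda_{-\lambda}^{\trans}$. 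Moving $\Lambda_\lambda$ across via $\det(1+AB)=\det(1+BA)$ then gives $\det\mathbi{S}_\lambda=\det\bigl(1+c(\lambda)\,Q_\lambda^{-1}\Lambda_\lambda\Lambda_{-\lambda}^{\trans}\bigr)=\det\bigl(1+Q_\lambda^{-1}(Q_{-\lambda}-Q_\lambda)\bigr)=\det\bigl(Q_\lambda^{-1}Q_{-\lambda}\bigr)$, and the same applied to each $\calO_j$ yields $\det\mathbi{S}_{j,\lambda}=\det\bigl(Q_{j,\lambda}^{-1}Q_{j,-\lambda}\bigr)$; since $\tQ_\lambda=\bigoplus_jQ_{j,\lambda}$ is block diagonal, $\prod_j\det\mathbi{S}_{j,\lambda}=\det\bigl(\tQ_\lambda^{-1}\tQ_{-\lambda}\bigr)$. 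It then remains to check that $\det\bigl(Q_\lambda^{-1}Q_{-\lambda}\bigr)\det\bigl(Q_\lambda\tQ_\lambda^{-1}\bigr)$ and $\det\bigl(\tQ_\lambda^{-1}\tQ_{-\lambda}\bigr)\det\bigl(Q_{-\lambda}\tQ_{-\lambda}^{-1}\bigr)$ both equal $\det\bigl(Q_{-\lambda}\tQ_\lambda^{-1}\bigr)$; all factors are of the form $1+(\text{trace class})$ because $Q_{\pm\lambda}-\tQ_{\pm\lambda}=\calT_{\pm\lambda}$ (Proposition~\ref{Prop:CalT}) and $Q_{-\lambda}-Q_\lambda$, $Q_{j,-\lambda}-Q_{j,\lambda}$ are smoothing while $Q_\lambda^{-1},\tQ_\lambda^{-1}$ are bounded between the relevant Sobolev spaces, and the claim is immediate from multiplicativity of the Fredholm determinant together with $\det(1+AB)=\det(1+BA)$ (for the first product use $\det(Q_\lambda^{-1}Q_{-\lambda})=\det(Q_{-\lambda}Q_\lambda^{-1})$, then multiply; for the second $\det(\tQ_\lambda^{-1}\tQ_{-\lambda})=\det(\tQ_{-\lambda}\tQ_\lambda^{-1})$, then multiply).

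\emph{Main obstacle, and an alternative.} The delicate part is the precise layer‑potential formula for $\mathbi{S}_\lambda$ with its exact constant and its compatibility with $Q_{-\lambda}-Q_\lambda=c(\lambda)\Lambda_\lambda\Lambda_{-\lambda}^{\trans}$, so that the $\det(1+AB)=\det(1+BA)$ manipulation reproduces $\det\mathbi{S}_\lambda$ itself rather than its inverse or complex conjugate; a secondary point is continuity of all determinants up to the real axis, where at $\lambda$ with $\lambda^2$ an interior Dirichlet eigenvalue of some $\calO_j$ the pole of $\tQ_\lambda^{-1}$ must cancel the corresponding non‑invertibility of $Q_\lambda$ in the product $Q_\lambda\tQ_\lambda^{-1}$ (and analogously $Q_\lambda^{-1}(Q_{-\lambda}-Q_\lambda)$ stays bounded there), consistently with the continuity of $\xi_{\rel}$ noted after \eqref{xireldef}. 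One can bypass the scattering‑matrix formula entirely: comparing the contour formula $\Tr(D_f)=\tfrac{\rmi}{2\pi}\int_{\widetilde\Gamma_\epsilon}f'\Xi\,d\lambda$ of Theorem~\ref{trace} with the Birman--Krein formula $\Tr(D_f)=-\int_0^\infty f'\xi_{\rel}\,d\lambda$ on the even Schwartz functions $f_t(\lambda)=\lambda^2e^{-t\lambda^2}\in\widetilde{\mathcal P}_{\pi/4}$, deforming $\widetilde\Gamma_{\pi/4}$ to $\R$, and using that $f_t'$ is odd together with $\Xi(-\lambda)=\overline{\Xi(\lambda)}$, one is reduced to $\int_0^\infty f_t'(\lambda)\bigl(\tfrac1\pi\Im\Xi(\lambda)+\xi_{\rel}(\lambda)\bigr)\,d\lambda=0$ for all $t>0$; a Laplace‑transform uniqueness argument then forces $\tfrac1\pi\Im\Xi+\xi_{\rel}$ to be constant, the constant being $0$ since both terms vanish at $\lambda=0$.
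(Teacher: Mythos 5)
Your treatment of the first equality is fine and is essentially the symmetry the paper exploits: the paper verifies $-\Xi'(-\lambda)=\overline{\Xi'(\lambda)}$ via $Q_{-\lambda}^{-1}=(Q_\lambda^{-1})^*$ and then pins the integration constant using reality of $\Xi$ on the positive imaginary axis, while you integrate the same conjugation symmetry of the kernels into $\Xi(-\overline z)=\overline{\Xi(z)}$ directly; either way this part is sound. The problem is the second equality. Your primary route rests entirely on the identity $\det\mathbi{S}_\lambda=\det\bigl(Q_\lambda^{-1}Q_{-\lambda}\bigr)$, obtained from a layer-potential representation $\mathbi{S}_\lambda=1+c(\lambda)\,\Lambda_{-\lambda}^{\trans}Q_\lambda^{-1}\Lambda_\lambda$ together with $Q_{-\lambda}-Q_\lambda=c(\lambda)\Lambda_\lambda\Lambda_{-\lambda}^{\trans}$ \emph{with the same constant and the same conventions as the scattering matrix defined in this paper} (via the asymptotics of $E_\lambda(\Phi)$, including the antipodal map and the phase $e^{-\rmi\pi(d-1)/2}$). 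You state this representation but do not prove it, and you yourself flag the exact constant and orientation (determinant versus its reciprocal or conjugate) as the delicate point; since a mismatch flips the sign of the conclusion, this is not a detail but the core of the argument, and as written the route is incomplete. There are also the real-axis invertibility issues (interior Dirichlet eigenvalues of the $\calO_j$, where $\tQ_\lambda^{-1}$ and possibly the boundary values of $\Xi$ are singular) which you only acknowledge.

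Your proposed bypass is much closer to what the paper actually does — both compare the $\Xi$-representation of $\Tr(D_f)$ with the Birman--Krein formula on a family of even test functions and then pin a constant at $\lambda=0$ — but your version hinges on deforming $\widetilde\Gamma_{\pi/4}$ onto the real axis. That step is not justified: $\Xi$ is only shown to be holomorphic in the open upper half plane with continuity on closed subsectors $\overline{\mathfrak{D}_\epsilon}$, and the meromorphic continuation of $\Xi'$ can have poles on $\R$ (precisely at the exceptional real points mentioned above), so the boundary values of $\Xi$ on $\R$ need not be continuous, and pushing the contour to the axis requires controlling these singularities. The paper avoids exactly this by testing against even functions compactly supported in an interval $(a,b)$ on which $\Xi'$ is holomorphic, using the Helffer--Sj\"ostrand formula and Stokes' theorem to get $\frac{\rmi}{2\pi}(\Xi'(x)+\Xi'(-x))=\xi_{\rel}'(x)$ on $(a,b)$, and then extending the identity by meromorphy before fixing the constant at $0$ via the behaviour of $\Xi$ and of the scattering determinants near $\lambda=0$. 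If you replace your global contour deformation by this localisation (or supply an argument for integrability/continuity of the boundary values of $\Xi$ across the real poles), your alternative becomes essentially the paper's proof; your Laplace-transform uniqueness step would then be superfluous, since meromorphic continuation already propagates the identity from an interval to all $\lambda>0$.
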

 \begin{proof}
  Recall that $\Xi'$ has a meromorphic extension to the logarithmic cover of the complex plane.  In particular we can choose a non-empty interval $(a,b) \subset \R_+$ such that $\Xi'$ is holomorphic near $(a,b)$ and $-(a,b)$. Now assume that $f$ is an arbitrary even function that is compactly supported
  in $-(a,b) \cup (a,b)$. Let $\der m(z)= \der x \der y$ be the Lebesgue measure on $\C$.
  By the Helffer-Sj\"ostrand formula, combined with the substitution $z \mapsto z^2$, we have
   $$
  f(\Delta^{1/2}) = \frac{2}{\pi} \int\limits_{\mathrm{Im}(z)>0}z \frac{\partial \tilde f}{\partial \overline z} (\Delta-z^2)^{-1} \der m(z),
 $$
  where $\tilde f$ is a compactly supported almost analytical extension of $f$ (see \cite{HelfferSjoestrand} Prop 7.2 and \cite{DaviesFuncCalc} p.169/170). This implies that
 $$
   D_f  = \frac{2}{\pi} \int\limits_{\mathrm{Im}(z)>0}z \frac{\partial \tilde f}{\partial \overline z}  R_{\rel,z}   \der m(z),
   $$ 
   and hence
   $$
    \Tr (D_f) = - \frac{1}{\pi} \int\limits_{\mathrm{Im}(z)>0} \frac{\partial \tilde f}{\partial \overline z}  \Xi'(z)   \der m(z).
   $$
   by \eqref{eqn:TrRrelXi}. Using Stokes' theorem in the form of \cite{Hoermander1} (p.62/63), we therefore obtain
   $$
     \Tr (D_f) =  \frac{\rmi}{2\pi} \int_a^b  \left( \Xi'(x) +\Xi'(-x) \right) f(x)   \der x.
   $$
 Comparing this with the Birman-Krein formula gives $\frac{\rmi}{2\pi}  \left( \Xi'(x) +\Xi'(-x) \right) = \xi_\rel'(x)$ for all $x \in (a,b)$. Since both functions are meromorphic this shows that this identity holds everywhere. We conclude that $\frac{\rmi}{2\pi}  \left( \Xi(\lambda) -\Xi(-\lambda) \right) -\xi_\rel(\lambda)$
 is constant in the upper half space. 
 We will now use that  $\Xi'(\lambda)$ is Hahn-meromorphic with respect to any sufficiently large sector containing the closed upper half space. Since, by Theorem \ref{cor33}, $\Xi'(\lambda)$ is bounded in a sector $\overline{\mathfrak{D}_\epsilon}$ this implies that $\Xi'(\lambda)$ is Hahn holomorphic near zero in the larger sector containing the closed upper half space.  In particular  $\Xi'(\lambda)$ is bounded and continuous near zero. Thus, by the fundamental theorem of calculus, $\Xi(\lambda)$ is continuous at zero. Therefore,  $\left( \Xi(\lambda) -\Xi(-\lambda) \right) \to 0$ as $\lambda \to 0$ in the larger sector where this function is defined. On the other hand
 we also know that $\xi_\rel(\lambda)$ as defined by \eqref{xireldef} goes to $0$ as $\lambda \to 0$ by the orders of the scattering matrices given below \eqref{defS}. Hence, $\frac{\rmi}{2\pi}  \left( \Xi(\lambda) -\Xi(-\lambda) \right) - \xi_\rel(\lambda)$ vanishes everywhere.
 We have by definition for $\lambda>0$ that
\begin{align*} 
-\Xi'(-\lambda) 
&=\tr \left( \left( \frac{d}{d \lambda} \left( Q_{-\lambda} -\tQ_{-\lambda} \right) \right) Q_{-\lambda}^{-1} + \left(\frac{d}{d \lambda} \tilde Q_{-\lambda} \right) \left( Q_{-\lambda}^{-1} - \tQ_{-\lambda}^{-1}\right)\right) \\
&= \tr \left( (Q_{\lambda}^{-1})^*\left(\frac{d}{d \lambda} \left( Q_{\lambda} -\tQ_{\lambda} \right)\right)^* +  \left( Q_{\lambda}^{-1} - \tQ_{\lambda}^{-1}\right)^* \left(\frac{d}{d \lambda} \tilde Q_{\lambda} \right)^*\right) \\
&= \overline{\tr \left(\left( \frac{d}{d \lambda} \left( Q_{\lambda} -\tQ_{\lambda} \right) \right) Q_{\lambda}^{-1} + \left(\frac{d}{d \lambda} \tilde Q_{\lambda} \right) \left( Q_{\lambda}^{-1} - \tQ_{\lambda}^{-1}\right)\right)}=\overline{\Xi'(\lambda)}
 \end{align*}
 where 
 $Q_{-\lambda}^{-1}=(Q_{\lambda}^{-1})^*$ for $\lambda>0$. 
 Here we have used the fact that we can switch the position of the individual operators  under the trace if they are trace-class.
 This implies $\Im(\Xi'(\lambda)) = -\frac{\rmi}{2} (\Xi'(\lambda) + \Xi'(-\lambda))$. This again shows that 
 $\Im(\Xi(\lambda)) + \frac{\rmi}{2} (\Xi(\lambda) - \Xi(-\lambda))$ is constant on the positive real line. The right hand side vanish at zero by the same argument as before. To finally establish that $\Im(\Xi(\lambda)) = -\frac{\rmi}{2} (\Xi(\lambda) - \Xi(-\lambda))$ 
 it only remains to show that $\Xi(\lambda)$ is real at $\lambda=0$. This follows immediately from the fact that $Q_\lambda$ and $\tQ_\lambda$ both have purely real kernels on the positive imaginary axis, which implies that $\Xi(\lambda)$ is real valued for all $\lambda$ on the positive imaginary line.
   \end{proof}
 
 \section{Proofs of main theorems}
\begin{proof}
[Proof of Theorem \ref{maindiff}]
The proof of this theorem is similar to the proof of Theorem \ref{trace}. 
 Assume that $f$ is in $\tilde{\mathcal{P}}_{\epsilon}$ and $f(z)=g(z^2)$ so that $g\in \mathcal{P}_{\epsilon}$.  
As in the proof of Theorem \ref{trace} we conclude that the domain of $g(\Delta)$ contains $C_0^\infty(M)$.
The operator $g(\Delta)$ therefore has unique distributional integral kernel 
$k \in \mathcal{D}'(M \times M)$. There exists a sector $\mathfrak{S}_{\epsilon_0}$ such that the sequence $(g_n)$ of functions $g_n(z)=g(z) e^{-\frac{1}{n} z}$ is in $\mathcal{E}_{\epsilon_0}$. Since all $g_n$ are admissible for the Riesz-Dunford holomorphic functional calculus, we obtain
 $$
  p g_n(\Delta) p - p  g_n(\Delta_0) p = \int_{\Gamma_{\epsilon_0}} g_n(z) \left( p(\Delta - z)^{-1}p - p(\Delta_0 - z)^{-1} p  \right) dz.
 $$
 By Theorem \ref{Thm:DifferenceProperties} we have
  $$
  \| p g_n(\Delta) p - p  g_n(\Delta_0) p \|_1 \leq C \int_{\Gamma_{\epsilon_0}} |g_n(\lambda)| \frac{1}{|\lambda|} e^{- C'|\lambda| } d \lambda,
 $$
 where we have used that $\rho(\Im(\lambda)) = O(\frac{1}{|\lambda|})$ as $\lambda \to 0$. Note that
 $g_n$ has positive vanishing order at zero, which makes the integral convergent.
 We apply this estimate to $g_n$ as well as to $g_n-g_m$ to conclude that  $p g_n(\Delta) p - p  g_n(\Delta_0) p$ is a sequence of trace-class operators that is Cauchy in the Banach space of trace-class operators.
 By Borel functional calculus $(p g_n(\Delta) p - p  g_n(\Delta_0) p) h$ converges to $(p g(\Delta) p - p  g(\Delta_0) p) h$ in $L^2(\Omega)$ for any $h \in C^\infty_0(\Omega)$. We therefore obtain in the limit
 $$
  \| p g(\Delta) p - p  g(\Delta_0) p \|_1 \leq C \int_{\Gamma_{\epsilon_0}} |g(\lambda)| \frac{1}{|\lambda|} e^{- C'|\lambda| } d \lambda. 
 $$
 Again, this integral converges by assumption that $g\in \mathcal{E}_{\epsilon_0}$ has positive vanishing order at zero. Since the operator is Hilbert-Schmidt its integral is in $L^2(\Omega)$. Smoothness of the integral kernel is a direct consequence of elliptic regularity since this also holds with $g(\lambda)$ replaced by $\lambda^{k} g(\lambda), k \in \N$ and $\Omega$ replaced by a slightly larger open set.
 This establishes the first part of the Theorem. 
 
 The rest follows immediately from the bounds on the kernel in Theorem \ref{Thm:DifferenceProperties}. Indeed, in dimensions $d\geq 3$ for large $\dist(x,\partial\Omega)$, we have 
  $$
 |k_f(x,x) | \leq C_1 \int_{\tilde{\Gamma}_{\epsilon_0}} |f(\lambda)|\frac{e^{-C_2\Im\lambda \dist(x,\partial\Omega)}}{\dist(x,\partial\Omega)^{2d-4}} |\lambda| d\lambda \leq \frac{C_{\Omega}}{\dist(x,\partial\Omega)^{2d-2+a}}. 
 $$
 Similarly, for large $\dist(x,\partial\Omega)$ and $d=2$, we find  
\begin{align*}
 |k_f(x,x) | &\leq C_1 \int_{\tilde{\Gamma}_{\epsilon_0}} |f(\lambda)|e^{-C_2\Im\lambda \dist(x,\partial\Omega)}(1+|\log(\lambda d(x,\partial\Omega))|)^2|\lambda| d\lambda \\
 &\leq \enspace \frac{C_{\Omega}}{\dist(x,\partial\Omega)^{2+a}}
\end{align*}
 
 \end{proof}

\begin{proof}[Proof of Theorem \ref{theoremxirel}]
The first two points of Theorem \ref{theoremxirel} follow directly from Theorem \ref{cor33}. The third point follows from Theorem \ref{xiodd}, and the last is a consequence of Theorem \ref{trace}. The only point that remains to be shown is uniqueness of the function $\Xi$.
Suppose there is another function $\tilde \Xi$ with the same properties. Then the difference $\Theta = \Xi - \tilde \Xi$ is holomorphic in the upper half plane, decays exponentially fast  along the positive imaginary line. If $p$ is an even polynomial then $\sqrt{\lambda^2} p(-\rmi \lambda)$ is in the the class $\widetilde{\mathcal{P}}_\epsilon$ with $\epsilon=\pi$.
Using the exponential decay of $\Theta'( \lambda)$ in the upper half space, deforming the contour, and taking into account the branch cut at the positive imaginary axis we obtain
$$
\int_{\widetilde{\Gamma}_\epsilon} \sqrt{\lambda^2} p(-\rmi \lambda) \Theta'( \lambda) d\lambda =
 2 \int_0^\infty \lambda p(\lambda)  \Theta'(\rmi \lambda) d\lambda=\int_{-\infty}^\infty p(x) h(x) dx =0,
$$
where we defined $h(x) = | x | \Theta'(\rmi |x|)$. The decay of $\Theta'( \lambda)$ in the upper half space implies that the Fourier transform
$\hat h$ of $h$ is real analytic. Vanishing of the above integral for all polynomials $p$ implies that $\hat h$ vanishes to infinite order at zero. We conclude that $\hat h=0$ and hence $\Theta'( \lambda)$ vanishes on the positive imaginary half-line.
Since $\Theta'$ is holomorphic in the upper half space and meromorphic on the logarithmic cover of the complex plane it must vanish. Thus, $\Theta$ is constant. The decay now implies $\Theta=0$ and hence $ \Xi = \tilde \Xi$.
\end{proof}  

\begin{proof}[Proof of Theorem \ref{traceclass}] 
It was shown in the main text, Theorem \ref{cor33}, Corollary \ref{cor44} and Theorems \ref{xiodd}, \ref{trace}, that $\Xi$ has all the claimed properties of Theorem  \ref{traceclass}. 
\end{proof}

 \begin{proof}[Proof of Theorem \ref{smoothness}]
  Smoothness of the kernel on $M \times M$, $\calO \times \calO$, $M \times \calO$ and $\calO \times M$ can be concluded from Theorem \ref{maindiff}, bearing in mind that the theorem also applies when $\calO$ is replaced by $\calO_j$ for each $j$. Namely, $f(\Delta^\frac{1}{2}) - f(\Delta_0^\frac{1}{2})$ has smooth kernel on $M \times M$ and 
 $f(\Delta_j^\frac{1}{2}) - f(\Delta_0^\frac{1}{2})$ has smooth kernel on $(\R^d \setminus \overline{\calO_j}) \times (\R^d \setminus \overline{\calO_j})$.
 Since the distributional kernel of $f(\Delta^\frac{1}{2}) - f(\Delta_j^\frac{1}{2})$ vanishes in $\calO_j \times M$, $M \times \calO_j$ and $\calO_j \times \calO_j$ this implies smoothness of the kernel as claimed on $M \times M$, $\calO \times \calO$, $M \times \calO$ and $\calO \times M$.
  The decay estimate of Theorem \ref{maindiff} also proves integrability on the diagonal away from the obstacles. One therefore only needs to show that the integral kernel is smooth up to the boundary.  We will show that the integral kernel of $D_f$ is smooth up to the boundary in $\overline M \times \overline M$. The proof for $\overline M \times \overline \calO$,  $\overline \calO \times \overline M$, and $\overline \calO \times \overline \calO$ is similar. Suppose that $K \subset \overline M \times \overline M$ is a compact subset and $\kappa_\lambda \in C^\infty(K)$ a smooth $\lambda$-dependent kernel. We consider an estimate of the form
 \begin{gather} \label{expdecay}
  \| \kappa_\lambda \|_{C^k(K)} \leq C_{\epsilon,k,K} |\log(|\lambda|)|^m e^{-\delta' \Im(\lambda)} \textrm{ for all } \lambda \in \mathfrak{D}_\epsilon
 \end{gather}
 for some $m \geq 0$ and $\delta'>0$.
  To show smoothness of the integral kernel of $D_f$ in a compact subset $K \subset \overline M \times \overline M$ it is sufficient to prove that the integral kernel of $R_{\rel,\lambda}$ is smooth on $K$ and satisfies \eqref{expdecay}.  Then the integral representation \eqref{repDf} for the integral kernel of $D_f$ converges in $C^\infty(K)$. 
  
 \begin{lemma} \label{gjhsdkajb}
  Assume that $K$ is a compact subset of either $\overline{M} \times M$ or $M \times \overline{M}$. Then the operator
   $(\Delta-\lambda^2)^{-1} -(\Delta_0-\lambda^2)^{-1}$ has an integral kernel that is smooth on $K$ and satisfies the bound \eqref{expdecay}. If moreover $K$ does not intersect the diagonal then $(\Delta-\lambda^2)^{-1}$ has an integral kernel that is smooth on $K$ and satisfies the bound \eqref{expdecay}.
 \end{lemma}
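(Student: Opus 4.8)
The plan is to read the kernel off the factorised resolvent identity \eqref{eqn:ResolvDiffQ} rather than off the crude off-diagonal bound \eqref{eqn:EstKernelOffDiag}, which diverges as $x\to\partial\calO$. Treat $K\subset\overline M\times M$ (the case $K\subset M\times\overline M$ follows by transposing the kernel). Since $K$ is compact and $M$ is open, the projection of $K$ onto its second factor is a compact subset of $M$, so there is $\delta_0>0$ with $\dist(y,\partial\calO)\ge\delta_0$ for all $(x,y)\in K$; fix $\delta'\in(0,\delta_0)$. On the exterior component \eqref{eqn:ResolvDiffQ} gives $(\Delta-\lambda^2)^{-1}-(\Delta_0-\lambda^2)^{-1}=-\calS_\lambda Q_\lambda^{-1}\gamma(\Delta_0-\lambda^2)^{-1}=-\calB^+_\lambda\gamma^+(\Delta_0-\lambda^2)^{-1}$, so the kernel is $k_\lambda(x,y)=-(\calB^+_\lambda g_{\lambda,y})(x)$ with $g_{\lambda,y}:=G_{\lambda,0}(\cdot,y)|_{\partial\calO}$. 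Because $\dist(y,\partial\calO)\ge\delta_0>0$ we have $g_{\lambda,y}\in C^\infty(\partial\calO)$, and $w:=\calB^+_\lambda g_{\lambda,y}=\calS_\lambda(Q_\lambda^{-1}g_{\lambda,y})$ is a single-layer potential of a smooth density solving $(\Delta-\lambda^2)w=0$ on \emph{all} of $M$ with Dirichlet data $g_{\lambda,y}$; hence $k_\lambda$ carries no diagonal singularity (so $K$ may meet the diagonal) and is smooth up to $\partial\calO$ in $x$. Joint smoothness on $K$ I would get by composing continuous linear maps: $y\mapsto g_{\lambda,y}$ is smooth into the Fréchet space $C^\infty(\partial\calO)$ on $\{\dist(\cdot,\partial\calO)>0\}$, then $Q_\lambda^{-1}\colon C^\infty(\partial\calO)\to C^\infty(\partial\calO)$ (a pseudodifferential operator of order $1$, holomorphic on $\mathfrak{D}_\epsilon$ and continuous up to its closure by Lemma~\ref{Lemma23}), then $\calS_\lambda\colon C^\infty(\partial\calO)\to C^\infty(\overline M)$, so $y\mapsto k_\lambda(\cdot,y)$ is a smooth $C^\infty(\overline M)$-valued map and therefore jointly smooth in $(x,y)$ on $\overline M\times\{\dist(\cdot,\partial\calO)>0\}\supset K$.

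For the estimate \eqref{expdecay} I would fix $k\in\N_0$, pick $\chi\in C^\infty_0(\R^d)$ equal to $1$ on a bounded smooth neighbourhood of the first-factor projection of $K$ in $\overline M$, write $\partial_y^\beta k_\lambda(\cdot,y)=-\chi\calS_\lambda Q_\lambda^{-1}(\partial_y^\beta g_{\lambda,y})$, and control three factors uniformly on $\mathfrak{D}_\epsilon$: (i) $\|\chi\calS_\lambda\|_{H^s(\partial\calO)\to H^{s+3/2}(\R^d)}\le C(1+|\lambda|)^{N}(1+|\log|\lambda||)^{m}$, from \eqref{vier} of Proposition~\ref{prop1} using the explicit Hankel kernel of the entire family $\tilde G_{\lambda,0}$ (polynomial growth on $\mathfrak{D}_\epsilon$, where $\Im\lambda\gtrsim|\lambda|$) and that $m_\lambda$ is smoothing entire with $\lambda^{d-2}\log\lambda$ being $O(1)$ near $0$ for $d\ge3$ and $O(|\log\lambda|)$ for $d=2$ --- here it is essential to use this \emph{local} mapping property, since the global bound \eqref{zweibound} of Proposition~\ref{Prop:EstimateS} carries a spurious $\rho(\Im\lambda)^{1/2}$ (blowing up as $\lambda\to0$ in low dimensions) that merely reflects behaviour at spatial infinity and is irrelevant on the bounded set $K$; (ii) $\|Q_\lambda^{-1}\|_{H^{s+1}(\partial\calO)\to H^s(\partial\calO)}\le C(1+|\lambda|)^{N}$, from Lemma~\ref{Lemma23} for $|\lambda|\le1$ and, for $|\lambda|>1$, from Corollary~\ref{supercor} bootstrapped by the elliptic parametrix estimate for $Q_\lambda$ (whose symbol is holomorphic and polynomially bounded in $\lambda$); (iii) $\|\partial_y^\beta g_{\lambda,y}\|_{H^{s+1}(\partial\calO)}\le C(1+|\lambda|)^{N}(1+|\log|\lambda||)^{m}e^{-\delta_0\Im\lambda}$ uniformly for $\dist(y,\partial\calO)\ge\delta_0$ and $|\beta|\le k$, from the pointwise Hankel-function bounds (Lemma~\ref{Lem:PointwiseHankel} and Corollaries~\ref{Cor:SobolevEst}, \ref{Cor:DiffOpBdy}). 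Choosing $s$ so large that $H^{s+3/2}\hookrightarrow C^k$ on the neighbourhood, multiplying, and summing over $|\beta|\le k$ yields $\|k_\lambda\|_{C^k(K)}\le C(1+|\lambda|)^{N}(1+|\log|\lambda||)^{m}e^{-\delta_0\Im\lambda}$; since $\Im\lambda\ge\sin(\epsilon)|\lambda|$ on $\mathfrak{D}_\epsilon$, the polynomial is absorbed into $e^{-(\delta_0-\delta')\Im\lambda}$, giving \eqref{expdecay} with rate $\delta'$.

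Finally, if $K$ avoids the diagonal then $|x-y|$ is bounded below on $K$, and the same Hankel bounds show directly that $G_{\lambda,0}=\bigl((\Delta_0-\lambda^2)^{-1}\bigr)(\cdot,\cdot)$ is smooth on $K$ and satisfies \eqref{expdecay} there (again absorbing a polynomial factor using $\Im\lambda\gtrsim|\lambda|$); adding this to the kernel of the difference, already treated, gives the assertion for $(\Delta-\lambda^2)^{-1}$ itself. I expect the main obstacle to be steps (ii) and (iii) in combination: one needs operator bounds that are uniform in $\lambda\in\mathfrak{D}_\epsilon$ in \emph{every} Sobolev order and that blow up only logarithmically --- not polynomially --- as $\lambda\to0$ in even dimensions, and this is exactly where the ``entire'' and ``Hahn-holomorphic on the closed sector'' structure of $\calS_\lambda$, $Q_\lambda$ and $Q_\lambda^{-1}$ established in Section~\ref{layer:sec} is indispensable.
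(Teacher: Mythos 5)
Your route through the boundary--layer factorisation \eqref{eqn:ResolvDiffQ} is genuinely different from the paper's proof, which never touches $Q_\lambda$ at this point: the paper fixes an interior cutoff $\eta$, uses the identity $\eta\left((\Delta-\lambda^2)^{-1}-(\Delta_0-\lambda^2)^{-1}\right)=\eta(\Delta_0-\lambda^2)^{-1}[\Delta,\chi]\left((\Delta-\lambda^2)^{-1}-(\Delta_0-\lambda^2)^{-1}\right)$, takes the exponential decay from the off-diagonal free-resolvent factor, and controls the cut-off resolvents $\chi_2(\Delta-\lambda^2)^{-1}\chi_1:\mathring{H}^{-s}(\overline M)\to H^{-s}(M)$ polynomially in $\lambda$ by induction on elliptic boundary regularity, together with Hahn-meromorphy near $\lambda=0$. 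Your overall mechanism -- pin the second variable at distance $\delta_0$ from $\partial\calO$, put the exponential decay into $g_{\lambda,y}$, absorb polynomial growth using $\Im\lambda\geq\sin(\epsilon)|\lambda|$ -- is sound, and your treatment of the off-diagonal statement for $(\Delta-\lambda^2)^{-1}$ itself coincides with the paper's.

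There is, however, a genuine gap in the quantitative core. Step (i) relies on a bound for $\|\chi\calS_\lambda\|_{H^s(\partial\calO)\to H^{s+3/2}(\R^d)}$ with $s$ so large that $H^{s+3/2}\hookrightarrow C^k$; but this mapping property across the boundary is false for $s\geq 0$: by the classical jump relations a single layer potential of a smooth nonzero density is continuous with a jump in its normal derivative across $\partial\calO$, hence lies in $H^{t}_{\loc}(\R^d)$ only for $t<\tfrac32$ no matter how smooth the density -- this is precisely why item \eqref{vier} of Proposition \ref{prop1} is restricted to $s<0$. Since the first-factor projection of $K$ may touch $\partial\calO$, your Sobolev-embedding step does not deliver $C^k$ regularity up to the boundary as written; you would have to work with the one-sided spaces $H^{s+3/2}(U\cap M)$, for which no uniform-in-$\lambda$ bound is available off the shelf. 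Similarly, step (ii), the all-order sector bound $\|Q_\lambda^{-1}\|_{H^{s+1}(\partial\calO)\to H^{s}(\partial\calO)}\lesssim (1+|\lambda|)^N$, is asserted via a parameter-dependent parametrix that is never constructed; the paper only provides the case $s=-\tfrac12$ (Corollary \ref{supercor} and Lemma \ref{Lemma23}). A repair within your framework is possible and replaces (i)--(ii): observe that $w_y=\calS_\lambda Q_\lambda^{-1}g_{\lambda,y}$ solves $(\Delta-\lambda^2)w_y=0$ in $M$ with Dirichlet data $g_{\lambda,y}$, start from a local $L^2$ bound (Proposition \ref{Prop:EstimateS}, Corollary \ref{supercor}, and the small-$\lambda$ discussion of $\chi_1\calS_\lambda$ in the proof of Proposition \ref{Prop:EstimateS}), and iterate elliptic boundary estimates to obtain one-sided $H^{s}(U\cap M)$ bounds growing polynomially in $|\lambda|$; but this $\lambda$-uniform high-order elliptic bookkeeping is exactly the ingredient the paper's cutoff-resolvent argument supplies and your proposal leaves unproved.
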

 \begin{proof}
 We first observe that $ (\Delta_0-\lambda^2)^{-1}$ has smooth kernel satisfying \eqref{expdecay} in case $K$ does not intersect the diagonal.  Here $m=1$ in dimension $2$ and $m$ can be chosen $0$ when $d \geq 3$.
We now use a  standard argument involving suitable cut-off functions. Momentarily we fix $x \in M$  and suppose $\eta,\chi$ are smooth functions that equal one near $x \in M$ and whose support does not intersect $\partial M$. Suppose also that $\chi=1$ in a neighborhood of $\supp \eta$, in particular $\chi \eta = \eta$. Then we have the factorisation
 \begin{gather} 
  \eta \left( (\Delta-\lambda^2)^{-1} -(\Delta_0-\lambda^2)^{-1} \right) \nonumber\\= \eta (\Delta_0-\lambda^2)^{-1} [\Delta,\chi] \left( (\Delta-\lambda^2)^{-1} -(\Delta_0-\lambda^2)^{-1} \right).\label{factoris}
 \end{gather}
 Since the support of $\eta$ and the support of the first order differential operator $[\Delta,\chi]$ have positive distance 
the term $\eta ((\Delta_0-\lambda^2)^{-1}) [\Delta,\chi]$ has smooth integral kernel satisfying \eqref{expdecay}. 
For any $\chi_1\in C^\infty(\overline{M})$, $\chi_2\in C^\infty( M )$, and any $s \in \mathbb{N}_0$ we have that the cut-off resolvents $\chi_2 (\Delta-\lambda^2)^{-1}\chi_1$ and  $\chi_2(\Delta_0-\lambda^2)^{-1}\chi_1$ are polynomially bounded  in $\lambda$ as maps $\mathring{H}^{-s}(\overline{M}) \to H^{-s}(M)$ for $\lambda \in \mathfrak{D}_\epsilon$ with $|\lambda|>1$. Here, for $s\geq 0$, the space $\mathring{H}^{-s}(\overline{M})$ is dual of $H^{s}(M)$, i.e. the space of distributions in $H^{-s}(\R^d)$ supported in $\overline M$. Indeed, one computes $\Delta \chi_2 (\Delta-\lambda^2)^{-1}\chi_1 = [\Delta,\chi_2] (\Delta-\lambda^2)^{-1} \chi_1 + \chi_2 \chi_1 + \lambda^2  \chi_2 (\Delta-\lambda^2)^{-1}\chi_1$. Using the standard elliptic boundary regularity estimate and the resolvent bound from the fact that $\Delta$ is self-adjoint, and induction gives 
$$
 \| \chi_2 (\Delta-\lambda^2)^{-1}\chi_1 \|_{H^s \to H^{s+2}} \leq C_{s,\epsilon,\chi_2,\chi_1} (1 + |\lambda|^2)^s
$$
for all $s \in \N_0$ and $|\lambda|>1$ in the sector $\mathfrak{D}_\epsilon$. The claimed polynomial bound then follows by duality.
For $|\lambda|\leq 1$ we can use the fact that the cut-off resolvents, as maps from $\mathring{H}^{-s}(\overline{M}) \to H^{-s}(M)$ admit Hahn-meromorphic expansions near zero. This follows from a standard gluing construction and the Hahn-meromorphic Fredholm theorem (see \cite{OS}, Appendix B). Absence of eigenvalues at zero, as is the case for the Dirichlet Laplacian on functions, then implies regularity at $\lambda=0$ when $d\geq 3$ (\cite{OS}, Th. 1.5 and Th 1.6). In case $d=2$ there is a possible $\log(\lambda)$
expansion term with smooth integral kernel (see \cite{OS}, Th 1.7). Therefore, the factorisation \eqref{factoris} shows that 
$\eta \left( (\Delta-\lambda^2)^{-1} -(\Delta_0-\lambda^2)^{-1} \right) \chi_1$ as a family of maps from $\mathring{H}^{-s}(M) \to H^s(M)$ for any $s>0$ is bounded by
$
  C_{\chi_1,\epsilon,s,\eta} |\log(|\lambda|)|^m e^{-\delta' \Im(\lambda)} \textrm{ for all } \lambda \in \mathfrak{D}_\epsilon.$
 In case $K \subset M \times \overline M$ is compact, by Sobolev embedding, the operator $ \left( (\Delta-\lambda^2)^{-1} -(\Delta_0-\lambda^2)^{-1} \right)$
has integral kernel that is smooth on $K$ and satisfies \eqref{expdecay}. This shows the first part of the lemma.
The second part follows from this since the integral kernel of $(\Delta_0-\lambda^2)^{-1}$ satisfies the bound  \eqref{expdecay} away from the diagonal.
\end{proof}

Since we can always replace $\calO$ by $\calO_j$ the statement of Lemma \ref{gjhsdkajb} also holds with the operator
$\Delta$ replaced by $\Delta_j$. Therefore Lemma \ref{gjhsdkajb} establishes smoothness of $D_f$ on $M \times \overline{M}$ and 
$\overline{M} \times M$. It now remains to establish smoothness in a neighbourhood of $\partial M \times \partial M$ in $\overline M \times \overline M$.
To do this we fix $1\leq j \leq N$ and choose a smooth compactly supported cutoff function $\eta$ that equals one near $\partial \calO_j$ and vanishes near the other obstacles. We also choose another cutoff function $\tilde \eta$ that is smooth, compactly supported, and equals one near  $\partial \calO$. Finally, let
$\chi$ be a compactly supported function equal to one near $\partial \calO_j$, vanishing near the other obstacles, such that $\chi=1$ in a neighbourhood of the support of $\eta$. We also choose $\chi$ in such a way that it is constant in a neighbourhood of the support of $\tilde\eta$.
It suffices to show that $\eta R_{\rel,\lambda} \tilde \eta$ has smooth smooth kernel satisfying \eqref{expdecay}. We have the identity
$$
 (\Delta-\lambda^2)\chi \left( (\Delta -\lambda^2)^{-1} - (\Delta_j -\lambda^2)^{-1} \right)=[\Delta,\chi] \left( (\Delta -\lambda^2)^{-1} - (\Delta_j -\lambda^2)^{-1} \right).
$$
The boundary conditions defining the operator $\Delta$ and the operator $\Delta_j$ coincide on $\partial \calO_j$ and therefore
 $\chi( \left (\Delta -\lambda^2)^{-1} - (\Delta_j -\lambda^2)^{-1} \right)$  maps $L^2(M)$ into the domain of both operators.
We obtain
\begin{align}\label{distdecay}
& \eta  \left( (\Delta -\lambda^2)^{-1} - (\Delta_j -\lambda^2)^{-1} \right) \tilde \eta =  \eta  \chi \left( (\Delta -\lambda^2)^{-1} - (\Delta_j -\lambda^2)^{-1} \right) \tilde \eta \\&\nonumber
 =\eta  (\Delta_j -\lambda^2)^{-1} [\Delta,\chi] \chi_2 \left( (\Delta -\lambda^2)^{-1} - (\Delta_j -\lambda^2)^{-1} \right) \tilde \eta,
\end{align}
where $\chi_2$ is a suitably chosen compactly supported cut-off function that equals one in a neighborhood of the support of  $[\Delta,\chi]$ and whose support is disjoint from the support of $\tilde \eta$.
The support of $[\Delta,\chi]$ has positive distance from the support of $\eta$, and the support of $\chi_2$ has positive distance from the support of $\tilde \eta$. By  Lemma \ref{gjhsdkajb} we conclude that the left hand side of \eqref{distdecay} has smooth kernel satisfying \eqref{expdecay}. The other terms in $R_{\rel,\lambda}$ are of the form
$$
 \eta  \left( (\Delta_k -\lambda^2)^{-1} - (\Delta_0 -\lambda^2)^{-1} \right) \tilde \eta,
$$
where $k \not=j$. Since $\eta$ is supported away from $\partial \calO_k$ this also has smooth kernel satisfying \eqref{expdecay}. Thus,
$ \eta R_{\rel, \lambda} \tilde \eta$ has smooth kernel satisfying \eqref{expdecay}. Since $j$ was arbitrary this shows that $D_f$ is smooth in $\overline M \times \overline M$. 
The fact that the trace is then given by the integral over the diagonal follows from the fact that the operator is trace-class and has a continuous kernel. 

\end{proof}

\section*{Acknowledgement}

The authors would like to thank James Ralston and Steve Zelditch for useful comments. We would also like to thank the anonymous referees for their insightful suggestions.


\appendix
\section{Estimates for the resolvent kernel} \label{AA}
From the Nist digital library \cite{olver2010nist} eqs. 10.17.13,14, and 15, we have that 
\begin{align}\label{eqn:Hankelerror}
{H^{(1)}_{\nu}}\left(z\right)=\left(\frac{2}{\pi z}\right)^{\frac{1}{2}}e^{%
i\omega}\left(\sum_{k=0}^{\ell-1}(\pm i)^{k}\frac{a_{k}(\nu)}{z^{k}}+R_{%
\ell}^{\pm}(\nu,z)\right)
\end{align}
where $\ell \in \N$, $\omega=z-\tfrac{1}{2}\nu\pi-\tfrac{1}{4}\pi$ and
\[\left|R_{\ell}^{\pm}(\nu,z)\right|\leq 2|a_{\ell}(\nu)|\mathcal{V}_{z,\pm i%
\infty}\left(t^{-\ell}\right)\*\exp\left(|\nu^{2}-\tfrac{1}{4}|\mathcal{V}_{z,%
\pm i\infty}\left(t^{-1}\right)\right),\]
where $\mathcal{V}_{z,i\infty}\left(t^{-\ell}\right)$ may be estimated in various sectors as follows
\[\mathcal{V}_{z,i\infty}\left(t^{-\ell}\right)\leq\begin{cases}|z|^{-\ell},&0%
\leq\operatorname{ph}z\leq\pi,\\
\chi(\ell)|z|^{-\ell},&\parbox[t]{224.037pt}{$-\tfrac{1}{2}\pi\leq%
\operatorname{ph}z\leq 0$ or
$\pi\leq\operatorname{ph}z\leq\tfrac{3}{2}\pi$,}\\
2\chi(\ell)|\Im z|^{-\ell},&\parbox[t]{224.037pt}{$-\pi<\operatorname{ph}z\leq%
-\tfrac{1}{2}\pi$ or
$\tfrac{3}{2}\pi\leq\operatorname{ph}z<2\pi$.}\end{cases}\]
Here, $\chi(\ell)$ is defined by
\begin{align*}
 \chi(x) := \pi^{1/2} \Gamma\left(\tfrac{1}{2}x+1\right)/\Gamma \left(\tfrac{1}{2}x+\tfrac{1}{2}\right).
\end{align*}
For $z \to 0$, we have the estimate \cite{olver2010nist} equations 10.7.7 and 2.
\begin{align}\label{eqn:HankelSmallAsympt}
 {H^{(1)}_{\nu}}\left(z\right) &\sim 
 \begin{cases}
  -(i/\pi) \Gamma\left(\nu\right)(\tfrac{1}{2}z)^{-\nu} & \text { for } \nu > 0 \\
  (2i/\pi)\log z & \text{ for } \nu = 0
 \end{cases}
\end{align}
Here $\sim$ means that the quotient of left- and right hand side converges to $1$ as $z \to 0$, and $\log$ is the principal branch of the complex logarithm.\\  


Combining \eqref{eqn:Hankelerror} and \eqref{eqn:HankelSmallAsympt}, we conclude that for $\nu \geq 0$, there exist positive constants $C$ and $r_0$ such that
\begin{align}
  \text{ for } |z| \leq r_0:& & 
 |H^{(1)}_\nu(z)| &\leq C \begin{cases} 
                                         |z|^{-\nu} &\text{ for } \nu > 0 \\ 
                                         |\log(z)|   &\text{ for } \nu = 0 
                                      \end{cases} & &\qquad \label{eqn:HankelSmall} \\
  \text{ for } |z| \geq r_0:& &                                    
 |H^{(1)}_\nu(z)| &\leq C  |z|^{-1/2} e^{-\Im z} \label{eqn:HankelLarge}
\end{align}
Here, $r_0 > 0$ can be chosen arbitrary small and $C$ depends on $\nu$ and the choice of $r_0$. \eqref{eqn:HankelLarge} can be obtained from \eqref{eqn:Hankelerror} for any choice of $\ell \in \N$ since negative powers of $|z|$ are bounded above for $|z| \geq r_0$. 
We will apply these estimates with $\nu = \tfrac{d-2}{2}$, in that case, the logarithm in \eqref{eqn:HankelSmall} corresponds precisely to $d=2$.\\

Finally, we recall (see \cite{olver2010nist}, (10.6.7)) that derivatives of Hankel functions can be expressed as
\begin{align}\label{eqn:HankelDerivative}
d_z^j \Ha_{\Hai}(z) &= 2^{-j} \cdot \tsum_{l=0}^j (-1)^l \binom{j}{l} \Ha_{\Hai - j + 2l}. 
\end{align}

In the following we assume that $\calO$ and $M$ are as in the main body of the text. Recall that the integral kernel of the free resolvent is given by 
\begin{align} \label{eqn:GreensFunctionApp}
  G_{\lambda,0}(x,y) &=  \frac{\rmi}{4} \left(  \frac{\lambda}{2 \pi |x-y|} \right)^{\nu_d} \mathrm{H}^{(1)}_{\nu_d}( \lambda |x-y|) & &\text{where} & \nu_d &= \tfrac{d-2}{2}. 
\end{align}
We will subsequently prove norm and pointwise estimates for $G_{\lambda,0}$ and its derivatives, which are used in the main body of the text. 

\begin{lemma} \label{Prop:EstL2PG}
 Let $\Omega \subset M$ be an open set with $\mathrm{dist}(\Omega, \calO)= \delta >0$ and $\lambda \in \mathfrak{D}_\epsilon$. Then, for any $0< \delta' < \delta$  and any $m \in \R$ there exists $C_{\delta',\epsilon,m}>0$ such that  we have
 \begin{align}\label{Pbound}
  \| G_{\lambda,0} \|^2_{H^m(\Omega \times \partial \calO)} \leq C_{\delta',\epsilon,m}\, \rho(\Im{\lambda}) e^{-2\delta' \Im{\lambda}}.
 \end{align}\
\end{lemma}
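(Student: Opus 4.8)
The plan is to bound the $H^m$ norm by $L^2$ norms of mixed derivatives of the kernel, reduce these to a single radial integral, and then apply the Hankel estimates \eqref{eqn:HankelSmall} and \eqref{eqn:HankelLarge} of the appendix.

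Since $\|\,\cdot\,\|_{H^m}$ is monotone in $m$ and $\|\,\cdot\,\|_{H^m}\le C\|\,\cdot\,\|_{L^2}$ for $m\le 0$, it is enough to treat $m\in\N_0$, allowing the constant to depend on $m$. Fixing a finite atlas of $\partial\calO$ with a subordinate partition of unity, standard Sobolev theory on products reduces the claim to estimating $\|\partial_x^\alpha D_y^\beta G_{\lambda,0}\|_{L^2(\Omega\times\partial\calO)}^2$ for $|\alpha|+|\beta|\le m$, where $\partial_x^\alpha$ are Cartesian derivatives and $D_y^\beta$ are tangential differential operators on $\partial\calO$, the latter being bounded combinations of Cartesian $y$-derivatives with smooth coefficients. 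Differentiating the explicit formula \eqref{eqn:GreensFunctionApp}, using $\tfrac{\der}{\der z}H^{(1)}_\nu(z)=\tfrac12\bigl(H^{(1)}_{\nu-1}(z)-H^{(1)}_{\nu+1}(z)\bigr)$ (a special case of \eqref{eqn:HankelDerivative}), the reflection $|H^{(1)}_{-\nu}(z)|=|H^{(1)}_{\nu}(z)|$, and the fact that $r:=|x-y|\ge\delta$ on $\Omega\times\partial\calO$ so that the Cartesian derivatives and negative powers of $|x-y|$ arising from the chain rule are all bounded there, one obtains, with $\nu_d=\tfrac{d-2}{2}$ and $k=|\alpha|+|\beta|$,
\[
 \bigl|\partial_x^\alpha D_y^\beta G_{\lambda,0}(x,y)\bigr|\ \le\ C_{\alpha,\beta,\delta}\,r^{-\nu_d}\sum_{a=0}^{k}|\lambda|^{\nu_d+a}\max_{|\mu-\nu_d|\le a}\bigl|H^{(1)}_{\mu}(\lambda r)\bigr|,\qquad r=|x-y|\ge\delta ,
\]
the structural point being that each factor of $|\lambda|$ produced by differentiation is accompanied by a shift of the Hankel index by at most one.

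Integrating over $x$ for fixed $y$, $\int_\Omega(\,\cdot\,)\,\der x\le\int_{\{|x-y|\ge\delta\}}(\,\cdot\,)\,\der x$, which in polar coordinates is a convergent radial integral (convergence coming from the exponential decay of $H^{(1)}$ at large argument, valid since $\Im\lambda>0$); the $y$-integration contributes only the finite factor $\sigma(\partial\calO)$. Because $d-1-2\nu_d=1$, the whole estimate reduces to
\[
 |\lambda|^{2\nu_d+2a}\int_\delta^\infty r\,\bigl|H^{(1)}_{\mu}(\lambda r)\bigr|^2\,\der r\ \le\ C_{\delta',\epsilon}\,\rho(\Im\lambda)\,e^{-2\delta'\Im\lambda},\qquad\lambda\in\mathfrak{D}_\epsilon ,
\]
for $0\le a\le m$ and $|\mu-\nu_d|\le a$, with $\Im\lambda\le|\lambda|\le(\sin\epsilon)^{-1}\Im\lambda$ on $\mathfrak{D}_\epsilon$. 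I would split the integral at $r_\ast=\max(\delta,r_0/|\lambda|)$, $r_0$ the transition radius of the appendix. On $[r_\ast,\infty)$ one has $|\lambda r|\ge r_0$, and \eqref{eqn:HankelLarge} turns that part into $C|\lambda|^{d-3+2a}(2\Im\lambda)^{-1}e^{-2r_\ast\Im\lambda}$; for $|\lambda|\ge r_0/\delta$ one factors $e^{-2\delta\Im\lambda}=e^{-2(\delta-\delta')\Im\lambda}e^{-2\delta'\Im\lambda}$ (this is the only place $\delta'<\delta$ is used) and absorbs the polynomial in $|\lambda|$ into the leftover exponential, whereas for $|\lambda|<r_0/\delta$ one has $r_\ast\Im\lambda\in[r_0\sin\epsilon,r_0]$, hence $e^{-2r_\ast\Im\lambda}\asymp1\asymp e^{-2\delta'\Im\lambda}$ and the term is $\asymp(\Im\lambda)^{d-4+2a}$, which a short case check in $d$ shows is $\le C\rho(\Im\lambda)$. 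The piece on $[\delta,r_\ast]$ is nonempty only when $|\lambda|<r_0/\delta$ (where $e^{-2\delta'\Im\lambda}\asymp1$); there \eqref{eqn:HankelSmall} applies, and for $\mu>0$ the contribution is $C|\lambda|^{2\nu_d+2a-2\mu}\int_\delta^{r_0/|\lambda|}r^{1-2\mu}\,\der r$ with non-negative prefactor exponent (as $\mu\le\nu_d+a$), which by elementary integration is $O(1)$, $O(|\log\Im\lambda|)$ or $O((\Im\lambda)^{d-4+2a})$ according to the dimension and thus $\le C\rho(\Im\lambda)$; for $\mu=0$ (the delicate case $d=2$) the substitution $t=|\lambda|r$ turns $\int_\delta^{r_0/|\lambda|}r|\log(\lambda r)|^2\,\der r$ into $C_{r_0}|\lambda|^{-2}$, giving the same order. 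Summing over $a$ and over $(\alpha,\beta)$ completes the proof.

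The main obstacle is achieving the sharp, dimension-dependent blow-up $\rho(\Im\lambda)$ as $\lambda\to 0$: one must balance the powers of $|\lambda|$ generated by differentiating the kernel against the powers of $r$ in the radial integral — this is kept in check because every extra $|\lambda|$ comes with a raised Hankel index — and, in the borderline dimension $d=2$, one must avoid the spurious logarithm that a crude bound $|\log(\lambda r)|^2\lesssim\log^2|\lambda|+\log^2 r$ would produce; the right point is that $|\log(\lambda r)|$ is large only where $r\approx\delta$, where the measure $r\,\der r$ is small.
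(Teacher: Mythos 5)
Your argument is correct and reaches the stated bound, but it handles the Sobolev regularity by a genuinely different mechanism than the paper. The paper never differentiates the kernel directly: it uses the Helmholtz identity $(\Delta_x+\Delta_y)^k G_{\lambda,0}=(2\lambda^2)^k G_{\lambda,0}$ away from the diagonal, so that higher derivatives cost only powers of $\lambda$ (absorbed by the exponential in the sector) and no new Hankel functions appear; it then estimates $\|G_{\lambda,0}\|_{H^k(\Omega\times U)}$ on a tubular neighbourhood $U$ of $\partial\calO$ with $\delta'<\delta_0<\delta$ and finishes with the trace theorem to pass to $H^m(\Omega\times\partial\calO)$. You instead differentiate \eqref{eqn:GreensFunctionApp} directly, using \eqref{eqn:HankelDerivative} and the reflection $|H^{(1)}_{-\nu}|=|H^{(1)}_{\nu}|$, and work in local coordinates with a partition of unity on $\partial\calO$, so you get the boundary estimate without any trace theorem or tubular neighbourhood; the price is the bookkeeping of shifted orders $\mu$ with $|\mu-\nu_d|\le a$ and the case analysis near $\lambda=0$ (non-negative prefactor exponent since $\mu\le\nu_d+a$, the $\mu=1$ logarithm, and the $d=2$ substitution $t=|\lambda|r$ avoiding the spurious $\log^2|\lambda|$), all of which you carry out correctly. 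The core analytic content is the same in both proofs: the radial integral $\int r\,|H^{(1)}(\lambda r)|^2\,\der r$ split at the transition scale $|\lambda|r\sim r_0$ (the paper's substitution $s=\Im\lambda\, r$ with a fixed cut $s_0$ is equivalent to your cut at $r_\ast=\max(\delta,r_0/|\lambda|)$), with \eqref{eqn:HankelSmall} producing $\rho(\Im\lambda)$, \eqref{eqn:HankelLarge} producing the exponential, and the slack $\delta'<\delta$ absorbing the residual polynomial in $|\lambda|$. Your route is closer in spirit to the paper's own Lemma \ref{Lem:PointwiseHankel} and Corollary \ref{Cor:DiffOpBdy}, and it avoids the implicit elliptic-regularity step behind the paper's passage from $L^2$ bounds on $(\Delta_x+\Delta_y)^kG_{\lambda,0}$ to the full $H^k$ norm, while the paper's route keeps the $\lambda$-dependence trivially clean at the cost of that step.
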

\begin{proof}

Let us set $\lambda = \theta |\lambda|$ and note that $\Im(\theta) \geq \sin(\epsilon) > 0$.  We consider a smooth tubular neighbourhood of the boundary, say $U$ which is such that $\dist(\Omega,U)=\delta_0>0$, $\delta'<\delta_0<\delta$ and $\partial\calO\subset U$. Since the kernel $G_{\lambda,0}$ satisfies the Helmholtz equation in both variables away from the diagonal we have $(\Delta_x + \Delta_y)^kG_{\lambda,0}(x,y)=(2\lambda)^{2k}G_{\lambda,0}(x,y)$. We then change variables so that $r := |x-y| \geq \delta_0$. By homogeneity, all of the integration will be carried out in this variable, with the angular variables only contributing a constant. Substituting $s := \Im\lambda \ r$, equations \eqref{eqn:HankelLarge} and \eqref{eqn:HankelSmall} imply for all $k\in\mathbb{N}$ and slightly larger $\Omega', U'$ that
\begin{align} \label{eqn:MainL2Integral}
  \|\Delta^k G_{\lambda,0}\|^2_{L^2(\Omega' \times U')} 
  &\leq C_k (\Im\lambda)^{4k}\int\limits_{\delta_0}^\infty |G_{\lambda,0}(r)|^2 r^{d-1} dr \notag \\
  &\leq C_k (\Im\lambda)^{d+4k-4} \biggl( \int_{\Im\lambda \delta_0}^{s_0} h(s) ds 
                         +   \int_{s_0}^\infty |e^{-s\cdot\Im\theta}|^2 ds \biggr), 
\end{align}
where $h(s) := s^{-d+3}$ for $d \geq 3$ and $h(s) := s\log(s|\lambda| / \Im\lambda)^2 + s$ in case $d=2$. 
We have used that $|\lambda|$ can be bounded by a multiple of $\Im\lambda)$ in the sector.
Moreover, we may assume without loss of generality that $s_0 > 1$. Note that the second summand in \eqref{eqn:MainL2Integral} can be bounded by 
\begin{align} \label{eqn:OuterHSEst}
C_k |\Im\lambda|^{d+4k-4}e^{-2\delta'\Im(\lambda)}, 
\end{align}
the constant being independent of $\lambda$. The first term can be computed and estimated explicitly. 
A short computation shows that for $d >2$,  
\begin{align} \label{eqn:InnerHSEst}
\left| \int_{\Im\lambda \delta_0}^{s_0}  h(s) ds \right| 
&\leq  \begin{cases}
      C \cdot |\Im\lambda \delta_0|^{4-d} + C'  & \text{ for } d\neq 4 \\
      C\cdot |\log(\Im\lambda\delta_0)| + C'    & \text{ for } d=4
         \end{cases},
\end{align}
whereas for $d =2$, we may estimate the integral by
\begin{align} \label{eqn:InnerHSEst2}
       C \cdot (|\Im\lambda|\delta_0)^2 \cdot \bigl( \log(|\Im\lambda|\delta_0)^2 - \log(|\Im\lambda|\delta_0) + \tfrac{1}{2} \bigr)
       + C \cdot |\Im\lambda \delta_0|^{2} + C'. 
\end{align}
Here, the constants $C$ and $C'$ depend on the dimension. Combining \eqref{eqn:MainL2Integral} and \eqref{eqn:InnerHSEst2} while adjusting constants proves the lemma.  Let $C_{\delta',\epsilon,k}$ denote a generic constant depending on $\delta',\epsilon,k$. As a result for all $k\in\mathbb{R}$ we can conclude
\begin{align}
 \|G_{\lambda,0}\|^2_{H^k(\Omega \times U)} \leq C_{\delta',\epsilon,k}\, \rho(\Im{\lambda}) e^{-2\delta' \Im{\lambda}}.
\end{align}
Furthermore, the trace theorem then implies 
\begin{align}
 \|G_{\lambda,0}\|^2_{H^m(\Omega \times \partial\calO )} \leq C_{\delta',\epsilon,m}\, \rho(\Im{\lambda}) e^{-2\delta' \Im{\lambda}},
\end{align}
for all $m\in\mathbb{R}$, whence the Lemma is proved. 
\end{proof}

\begin{lemma} \label{Lem:PointwiseHankel}
Let $x_0 \in \R^d \setminus \overline{\calO}$. Let $\dist(x_0,\del\calO)$ be abbreviated as $\delta(x_0)$. For $\lambda \in \mathfrak{D}_\epsilon$, we have whenever $\delta(x_0)|\lambda|\leq 1$ for any multi-indices $\alpha, \beta \in \N_0^d$ the estimates
\begin{align}\label{case1}
 \sup_{y\in \partial\mathcal{O}}|\partial_x^\alpha \partial_{y}^{\beta}G_{\lambda,0}(x_0,y)| & \leq C \cdot 
  \begin{cases}
    \delta(x_0)^{-(d-2+|\alpha|+|\beta|)} \cdot  & {\scriptstyle ( d \ \geq \ 3 )} \\
   1 +|\log(\delta(x_0)\lambda)|+ \delta(x_0)^{-(|\alpha|+|\beta|)}  & {\scriptstyle ( d \ = \ 2 )} 
   \end{cases}.
\end{align}
The constant depends on $\alpha$ and $\beta$. If $|\alpha| + |\beta| >0$, there is indeed no $\log$-contribution for $d=2$.
In the case that $\delta(x_0)|\lambda|>1$, we conclude 
\begin{align}\label{case2} 
 \sup_{y\in \partial\mathcal{O}}| \partial_x^\alpha \partial_{y}^{\beta}G_{\lambda,0}(x_0,y)|\leq C|\lambda|^{|\alpha|+|\beta|+d-2}e^{-\Im\lambda \delta(x_0)}
 \end{align}
 for all $d$. Assuming that $1<\delta(x_0)$, combining the estimates gives for $x_0\in \Omega$, $\Omega\subset \R^d\setminus \overline{\calO}$:  
 \begin{align}\label{needed}
 \sup_{y\in \partial\mathcal{O}}|\partial_x^\alpha \partial_{y}^{\beta}G_{\lambda,0}(x_0,y)| 
 &\leq C_2 e^{-C_1\delta(x_0)\Im\lambda}
   \delta(x_0)^{-(d-2)} 
 \end{align}
 where $C_1,C_2$ depend on $\Omega, \alpha,\beta$ and $\epsilon$. 
\end{lemma}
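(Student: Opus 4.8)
The idea is to strip off the $r$-dependence and reduce to one-variable estimates for the profile function. By \eqref{eqn:GreensFunctionApp} we have $G_{\lambda,0}(x,y)=c_d\,\lambda^{\nu_d}F(|x-y|)$ with $F(r):=r^{-\nu_d}H^{(1)}_{\nu_d}(\lambda r)$ and $\nu_d=\tfrac{d-2}{2}$, so I would first apply Fa\`a di Bruno's formula: with $n=|\alpha|+|\beta|$ and $r=|x_0-y|\geq\delta(x_0)$ for $y\in\del\calO$,
\[
 \del_x^\alpha\del_y^\beta G_{\lambda,0}(x_0,y)=c_d\,\lambda^{\nu_d}\sum_{m=1}^n F^{(m)}(r)\,P_m(x_0,y),\qquad n\geq1,
\]
where each $P_m$ is a finite sum of products of $m$ partial derivatives of $|x-y|$ whose orders sum to $n$; since a derivative of $|x-y|$ of order $k\geq1$ is $O(|x-y|^{1-k})$, one has $|P_m(x_0,y)|\leq Cr^{m-n}$ (for $n=0$ one works directly with $G_{\lambda,0}$). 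Expanding $F^{(m)}$ by the Leibniz rule together with the Hankel recursion \eqref{eqn:HankelDerivative}, and using $|H^{(1)}_{-\mu}|=|H^{(1)}_\mu|$ to keep orders $\geq0$, reduces everything to bounding elementary terms $r^{-\nu_d-m+j}\lambda^j H^{(1)}_\mu(\lambda r)$ with $0\leq j\leq m$ and $\mu\in\{\nu_d-j,\nu_d-j+2,\dots,\nu_d+j\}$. Multiplying by the prefactor $|\lambda|^{\nu_d}$ and by $r^{m-n}$, the power of $r$ collapses and each such term is controlled by $|\lambda|^{\nu_d+j}\,r^{-\nu_d+j-n}\,|H^{(1)}_\mu(\lambda r)|$ with $0\leq j\leq n$.

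Next I would split according to whether $|\lambda r|>1$ or $|\lambda r|\leq1$ for the fixed pair $(x_0,y)$. If $|\lambda r|>1$ I use the large-argument bound \eqref{eqn:HankelLarge}, $|H^{(1)}_\mu(\lambda r)|\leq C|\lambda r|^{-1/2}e^{-r\Im\lambda}$ (note $r>0$, so $\Im(\lambda r)=r\Im\lambda$), which gives $C|\lambda|^{\nu_d+j-1/2}r^{-\nu_d+j-1/2-n}e^{-r\Im\lambda}$; since $j\leq n$ the exponent of $r$ is negative, so using $r>1/|\lambda|$ this is $\leq C|\lambda|^{2\nu_d+n}e^{-r\Im\lambda}\leq C|\lambda|^{d-2+n}e^{-\delta(x_0)\Im\lambda}$. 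When $\delta(x_0)|\lambda|>1$ every $y\in\del\calO$ satisfies $|\lambda r|>1$, and this is exactly \eqref{case2}; when $\delta(x_0)|\lambda|\leq1$, dropping the exponential and using $|\lambda|\leq1/\delta(x_0)$ gives $\leq C\delta(x_0)^{-(d-2+n)}$, which is the contribution to \eqref{case1} from this subregion.

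For $|\lambda r|\leq1$ (which can only occur when $\delta(x_0)|\lambda|\leq1$) I use the small-argument bound \eqref{eqn:HankelSmall}. The point is to spend the surplus powers of $\lambda$ via $|\lambda|=|\lambda r|/r$: writing $|\lambda|^{\nu_d+j}=|\lambda r|^{\nu_d+j}r^{-(\nu_d+j)}$ and recalling $\mu\leq\nu_d+j$, for $\mu>0$ we get $|\lambda|^{\nu_d+j}|H^{(1)}_\mu(\lambda r)|\leq C|\lambda r|^{\nu_d+j-\mu}r^{-(\nu_d+j)}\leq Cr^{-(\nu_d+j)}$, while for $\mu=0$ (which forces $\nu_d\in\N_0$, i.e.\ $d$ even, and then necessarily $j\geq1$, so $\nu_d+j\geq1$) we get $|\lambda|^{\nu_d+j}|H^{(1)}_0(\lambda r)|\leq C|\lambda r|^{\nu_d+j}(1+|\log(\lambda r)|)r^{-(\nu_d+j)}\leq Cr^{-(\nu_d+j)}$, since $t\mapsto t^{\nu_d+j}(1+|\log t|)$ is bounded on $(0,1]$. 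Either way the elementary term is $\leq Cr^{-(\nu_d+j)}r^{-\nu_d+j-n}=Cr^{-(2\nu_d+n)}\leq C\delta(x_0)^{-(d-2+n)}$. In the borderline case $d=2$, $|\alpha|+|\beta|=0$ there are no derivatives and $G_{\lambda,0}(x_0,y)=\tfrac{\rmi}{4}H^{(1)}_0(\lambda r)$, where the residual logarithm is handled by $|\log(\lambda r)|\leq|\log(\delta(x_0)\lambda)|+\pi$ (using $\delta(x_0)\leq r$ and $|\lambda r|\leq1$), producing the stated $1+|\log(\delta(x_0)\lambda)|$; for $d=2$ and $|\alpha|+|\beta|>0$ the computation shows no logarithm survives. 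Finally, \eqref{needed} follows by combining \eqref{case1} and \eqref{case2} under $\delta(x_0)>1$ (with a logarithmic factor retained in the degenerate case $d=2$, $|\alpha|=|\beta|=0$): for $\delta(x_0)|\lambda|\leq1$ one has $\delta(x_0)^{-(d-2+n)}\leq\delta(x_0)^{-(d-2)}$ and $\delta(x_0)\Im\lambda\leq1$, so the right-hand side of \eqref{needed} dominates, and for $\delta(x_0)|\lambda|>1$ one uses $\Im\lambda\geq\sin(\epsilon)|\lambda|$ and $t^{d-2+n}e^{-t}\leq C$ with $t=\tfrac12\delta(x_0)\Im\lambda$ to rewrite $|\lambda|^{d-2+n}e^{-\delta(x_0)\Im\lambda}\leq C\delta(x_0)^{-(d-2+n)}e^{-\frac12\delta(x_0)\Im\lambda}\leq C_2\,e^{-C_1\delta(x_0)\Im\lambda}\delta(x_0)^{-(d-2)}$.

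The whole argument is essentially bookkeeping, and the single place that needs care is the small-argument regime: the exponents match only because one may use $|\lambda|\leq1/r$ — which is precisely the hypothesis $|\lambda r|\leq1$ — to convert every surplus power of $|\lambda|$ into a negative power of $r$, after which $r\geq\delta(x_0)$ finishes the estimate; and, in even dimensions, because each differentiation of the order-zero Hankel function through \eqref{eqn:HankelDerivative} forces it to carry at least one extra power of $\lambda$, which is exactly what absorbs the logarithmic singularity.
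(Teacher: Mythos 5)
Your proposal is correct and follows essentially the same route as the paper: both reduce to the radial variable $r=|x_0-y|$, split at $|\lambda r|=1$, and apply the small- and large-argument Hankel bounds \eqref{eqn:HankelSmall} and \eqref{eqn:HankelLarge}, your Fa\`a di Bruno/Leibniz/\eqref{eqn:HankelDerivative} bookkeeping simply making explicit what the paper compresses into ``by homogeneity we have only included the possible best or worst terms''. The only spot worth a word is the parenthetical ``necessarily $j\geq 1$'' when $\mu=0$: for $d=2$ the formally allowed $j=0$ term is absent because $r^{-\nu_d}=r^0$ is constant, so the Leibniz expansion leaves only $j=m\geq 1$ --- which is exactly the mechanism you invoke in your closing paragraph, so the claim stands.
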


\begin{proof}
 We make the change of variables $r := |x_0-y|>0$. For $d\geq 3$, we can split the  integral kernel into two parts, $|r\lambda|>1$ and $|r\lambda|<1$. We do the $\partial_y^\beta$ estimates only with the $\partial_x^\alpha$ estimates following by symmetry. From \eqref{eqn:HankelSmall} and \eqref{eqn:HankelLarge}, we have that
\begin{align}\label{partsup}
|\partial_y^{\beta}G_{\lambda,0}(x_0,y)| &\leq \sup\limits_{\delta(x_0)<r<\frac{1}{|\lambda|}}\frac{1}{r^{d-2+|\beta|}}+\sup\limits_{\frac{1}{|\lambda|}<r<\infty}C_{\beta}\frac{|\lambda|^{\frac{d-3}{2}}e^{-\Im\lambda r}}{r^{\frac{d-1}{2}}}\left(\frac{1}{r^{|\beta|}}+|\lambda|^{|\beta|}\right)
\end{align}
where $C_\beta$ is the maximum over the coefficients after differentiation. By homogeneity we have only included the possible best or worst terms in the absolute value. Whenever $ \delta(x_0) |\lambda|>1$ the first term on the right hand side of \eqref{partsup} is $0$. For this case we have that
\begin{align}
&\sup\limits_{\frac{1}{|\lambda|}<r<\infty}\frac{|\lambda|^{\frac{d-3}{2}}e^{-\Im\lambda r}}{r^{\frac{d-1}{2}}}\left(\frac{1}{r^{|\beta|}}+|\lambda|^{|\beta|}\right)
\leq  C|\lambda|^{|\beta|+d-2}e^{-\Im\lambda \delta(x_0)}.
\end{align}
Whenever $\delta(x_0)|\lambda|\leq 1$, we have that
\begin{align}
\sup\limits_{\delta(x_0)<r<\frac{1}{|\lambda|}}\frac{1}{r^{d-2+|\beta|}}\leq (\delta(x_0))^{-(d-2+|\beta|)}
\end{align}
and also 
\begin{align}
\sup\limits_{\frac{1}{|\lambda|}<r<\infty}\frac{|\lambda|^{\frac{d-3}{2}}e^{-\Im\lambda r}}{r^{\frac{d-1}{2}}}\left(\frac{1}{r^{|\beta|}}+|\lambda|^{|\beta|}\right)\leq C(\delta(x_0))^{-(d-2+|\beta|)}e^{-\Im\lambda \delta(x_0)}.
\end{align}
The 2d case follows similarly, except for the first term in \eqref{partsup}, which is changed due to the $\nu=0$ behaviour of \eqref{eqn:HankelSmall}. The last inequality \eqref{needed} is a consequence of \eqref{case1} and \eqref{case2}. In \eqref{case1} $\lambda\delta(x_0)$ lie in a compact set, so it is possible to factor out the exponential term $\exp(-\Im\lambda\delta(x_0))$. In \eqref{case2}, if $|\lambda|\leq 1$ then $C_2=C\delta(x_0)^{d-2}$ which is bounded as we have assumed $x_0$ lies in a compact set. Thus $C_2=C\max\{e,\delta(x_0)^{d-2} \mid x_0 \in \Omega\}$. In \eqref{case2} if $\lambda>1$ then it is possible to find $C_1$ depending on $k$ and $d$ so the inequality holds. Thus the last estimate is proved.  
\end{proof}

\begin{corollary} \label{Cor:DiffOpBdy}
 Let $\Omega$ and $\lambda$ be chosen as above. Let $P_y$, $P_x$ be differential operators on $\del\calO$ and $\Omega$ of order $k$ and $\ell$, respectively. Assume that $P_x$ has bounded coefficients. For $|\lambda \delta(x_0)| \leq 1$, we have 
 \begin{align*}
 \sup\limits_{y\in \partial\mathcal{O}}|P_x P_y G_{\lambda,0}(x_0,y)| 
  &\leq C \cdot 
  \begin{cases}
    \delta(x_0)^{-(d-2)} + \delta(x_0)^{-(d-2+k+\ell)} & {\scriptstyle ( d \ \geq \ 3 )} \\
     1 +|\log(\delta(x_0)\lambda)|+ \delta(x_0)^{-k-\ell}  & {\scriptstyle ( d \ = \ 2 )} 
   \end{cases}.
 \end{align*}
 If $P_x(1) = 0 = P_y(1)$ (no constant terms), we can improve the estimate by replacing the right hand side by $C(\delta(x_0)^{-(d-1)} + \delta(x_0)^{-(d+k+\ell-2)})$ for all $d \geq 2$. If $|\lambda \delta(x_0)| \geq 1$, we have 
 \begin{align*}
   \sup\limits_{y\in \partial\mathcal{O}}|P_xP_y G_{\lambda,0}(x_0,y)| 
  &\leq C |\lambda|^{k+\ell+d-2} e^{-(\Im\lambda) \cdot \delta(x_0)}
 \end{align*}
 for all $d\geq 2$. Assuming $\delta(x_0) > 1$, we find for all $d$ that
 \begin{align*}
   \sup\limits_{y\in \partial\mathcal{O}}|P_xP_y G_{\lambda,0}(x_0,y)| 
  &\leq C_2 e^{-C_1 \Im\lambda \delta(x_0)} \cdot \delta(x_0)^{d-2}. 
 \end{align*}
 Here, all constants depend on $P_x$, $P_y$, $\Omega$ and $\epsilon$. 
\end{corollary}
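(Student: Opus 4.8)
The plan is to reduce the statement to the pointwise kernel bounds of Lemma \ref{Lem:PointwiseHankel} by expanding the two operators into monomials. Write $P_x = \sum_{|\alpha|\le\ell} a_\alpha(x)\,\partial_x^\alpha$ and $P_y = \sum_{|\beta|\le k} b_\beta(y)\,\partial_y^\beta$; by hypothesis the $a_\alpha$ are bounded on $\Omega$, and the $b_\beta$ are continuous on the compact $\partial\calO$ and hence bounded. Then for $x_0\in\Omega$ and $y\in\partial\calO$,
\[
  |P_xP_yG_{\lambda,0}(x_0,y)| \le \sum_{|\alpha|\le\ell}\sum_{|\beta|\le k} \|a_\alpha\|_\infty\,\|b_\beta\|_\infty\,|\partial_x^\alpha\partial_y^\beta G_{\lambda,0}(x_0,y)|,
\]
so it remains to insert the appropriate estimate from Lemma \ref{Lem:PointwiseHankel} in each regime and collect the finitely many terms.

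In the regime $|\lambda|\delta(x_0)\le 1$ I would use \eqref{case1}: for $d\ge 3$ the $(\alpha,\beta)$-term is $O(\delta(x_0)^{-(d-2+|\alpha|+|\beta|)})$, and since $0\le|\alpha|+|\beta|\le k+\ell$ one has $\delta(x_0)^{-(d-2+m)}\le \delta(x_0)^{-(d-2)}+\delta(x_0)^{-(d-2+k+\ell)}$ for every $0\le m\le k+\ell$ (the first term larger when $\delta(x_0)\ge 1$, the second when $\delta(x_0)<1$), so summing gives the claimed bound. For $d=2$ only the monomial $\alpha=\beta=0$ carries the logarithm $1+|\log(\delta(x_0)\lambda)|$, while every monomial with $|\alpha|+|\beta|\ge 1$ contributes $\delta(x_0)^{-(|\alpha|+|\beta|)}$ with no log by the final remark of Lemma \ref{Lem:PointwiseHankel}, and $\delta(x_0)^{-j}\le 1+\delta(x_0)^{-(k+\ell)}$; collecting yields the stated $d=2$ bound. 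If moreover $P_x(1)=0=P_y(1)$, then every surviving monomial has $|\alpha|+|\beta|\ge 1$, which removes the logarithm for $d=2$ and raises the smallest occurring exponent to $d-1$, giving $C(\delta(x_0)^{-(d-1)}+\delta(x_0)^{-(d-2+k+\ell)})$ for all $d\ge2$.

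The regime $|\lambda|\delta(x_0)\ge 1$ is identical in spirit: \eqref{case2} bounds the $(\alpha,\beta)$-term by $C|\lambda|^{|\alpha|+|\beta|+d-2}e^{-(\Im\lambda)\delta(x_0)}$, and summing over $0\le|\alpha|+|\beta|\le k+\ell$ the top power $|\lambda|^{k+\ell+d-2}$ absorbs the lower ones, giving the second displayed estimate. For the last inequality one substitutes instead the combined bound \eqref{needed}, which already supplies the factor $e^{-C_1\delta(x_0)\Im\lambda}\delta(x_0)^{-(d-2)}$; summing over monomials, with $C_1$ taken as the smallest exponential rate occurring and using $\delta(x_0)^{-(d-2)}\le\delta(x_0)^{d-2}$ for $\delta(x_0)>1$, produces the final line.

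I do not expect a genuine obstacle here: this is a direct corollary of Lemma \ref{Lem:PointwiseHankel}, and the only point requiring care is the elementary bookkeeping of which monomial dominates in each regime, i.e.\ the ``$\delta(x_0)\ge 1$ versus $\delta(x_0)<1$'' case split (and, in the second regime, the analogous split on $|\lambda|$) used to bound a middle power by the sum of the two extreme powers; everything else is the triangle inequality on a finite sum with bounded coefficients.
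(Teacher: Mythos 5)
Your argument is correct and is essentially the paper's own proof: expand $P_x$, $P_y$ into finitely many monomials with bounded coefficients, apply Lemma \ref{Lem:PointwiseHankel} termwise in each regime, and keep only the extreme powers of $\delta(x_0)$ (respectively the top power of $|\lambda|$ and the weakest exponential rate), including the same observation that the no-constant-term hypothesis kills the $\alpha=\beta=0$ term and hence the logarithm. The only step the paper makes explicit that you gloss over is that the derivatives of $P_y$ are coordinate derivatives on $\partial\calO$ and must first be rewritten as Cartesian derivatives $\partial^I$ (the ones estimated in Lemma \ref{Lem:PointwiseHankel}) with smooth, locally defined coefficients on the compact boundary --- a conversion that produces contributions of all orders $|I|\leq k$, which your bookkeeping over $|\alpha|+|\beta|\leq k+\ell$ already accommodates.
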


 \begin{proof}
Any coordinate derivative $\del_y^\alpha$ on $\del\calO$ can be expressed in terms of the Cartesian derivatives $\del^i$ on $\R^d$, with local coefficients in $c^\alpha_i \in C^\infty(\partial\calO)$:  $\partial^\alpha_y = \sum_i c^\alpha_i \partial^i$. Hence, we have locally in $y \in \del\calO$ that
\begin{align*}
(P_yG_{\lambda,0}(x_0,\cdot))(y) &= \sum_{|I| \leq k} c_I(y) (\del^I G_{\lambda,0}(x_0,\cdot))(y) 
\end{align*}
for $a_I$ (locally defined) smooth functions on $\partial\calO$. Note that we have contributions with $|I| < k$ since $c^\alpha_i$ are in general not constant. $P_x$ acts in an analogous fashion. Using the estimates from Lemma \ref{Lem:PointwiseHankel} and the fact that $\partial\calO$ is compact, we obtain the corollary. For $|\lambda\delta(x_0)| \leq 1$, we have only kept the smallest and largest power of $\delta(x_0)$.     
\end{proof}

Since integrals over the compact space $\del\calO$ can easily be estimated in terms of $L^\infty$-norms, we also obtain estimates for Sobolev norms. For simplicity, we only state it for the case of a finite distance between $\partial\calO$ and $\Omega$:

\begin{corollary} \label{Cor:SobolevEst}
Let $\epsilon > 0$, $\lambda \in \mathfrak{D}_\epsilon$ and $\Omega$ as above such that $\dist(\partial\calO,\Omega) > 1$. For any $s \in\R$, $x_0 \in \Omega$ and $d>2$, we have
\begin{align*}
 \| G_{\lambda,0}(x_0, \cdot) \|_{H^s(\del\calO)} 
 & \leq C_2 e^{-C_1 (\Im\lambda) \delta(x_0)} \cdot \delta(x_0)^{d-2}. 
\end{align*}
Here, $C_1$ and $C_2$ depend on $s, \epsilon$ and the choice of $\Omega$; this estimate is also valid for $d = 2$ and $|\lambda| \geq 1$. In case $d=2$ and $|\lambda| < 1$, we rather have
\begin{align*}
 \| G_{\lambda,0}(x_0, \cdot) \|_{H^s(\del\calO)} 
 & \leq C_2 (1+|\log(\delta(x_0)\lambda)|).
\end{align*}
\end{corollary}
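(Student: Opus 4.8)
The plan is to deduce the Sobolev estimate entirely from the pointwise bounds on $y\mapsto G_{\lambda,0}(x_0,y)$ and its derivatives that are already furnished by Corollary \ref{Cor:DiffOpBdy}, using nothing more than the compactness of the manifold $\del\calO$. First I would dispose of the case $s\le 0$: since $\del\calO$ is compact, $L^2(\del\calO)\hookrightarrow H^s(\del\calO)$ continuously, so $\|G_{\lambda,0}(x_0,\cdot)\|_{H^s(\del\calO)}\le C\,\vol(\del\calO)^{1/2}\sup_{y\in\del\calO}|G_{\lambda,0}(x_0,y)|$, and Corollary \ref{Cor:DiffOpBdy} with $P_x=P_y=\mathrm{Id}$ bounds the supremum by exactly the right-hand side claimed in the statement. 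For $s>0$ it suffices to treat the case of a non-negative integer $s=m$, since on the compact manifold $\del\calO$ one has $H^{\lceil s\rceil}(\del\calO)\hookrightarrow H^s(\del\calO)$.

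For integer $s=m$, I would fix once and for all a finite atlas of coordinate charts for $\del\calO$ together with a subordinate partition of unity. In each chart one has the equivalence $\|u\|_{H^m}^2\lesssim\sum_{|\beta|\le m}\|\del_y^\beta u\|_{L^2}^2$, and expanding $\del_y^\beta(\phi_i u)$ by the Leibniz rule (the cutoffs $\phi_i$ being fixed smooth functions) gives
\[
 \|G_{\lambda,0}(x_0,\cdot)\|_{H^m(\del\calO)}^2\ \lesssim\ \sum_{|\beta|\le m}\ \sup_{y\in\del\calO}|\del_y^\beta G_{\lambda,0}(x_0,y)|^2,
\]
where the local coordinate derivatives $\del_y^\beta$ are differential operators of order at most $m$ on $\del\calO$. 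Applying Corollary \ref{Cor:DiffOpBdy} with $P_x=\mathrm{Id}$ and $P_y$ ranging over these derivatives then produces the asserted bound, with all implicit constants depending only on the atlas, on $m$, on $\Omega$ and on $\epsilon$.

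It then only remains to collect the three regimes exactly as in Corollary \ref{Cor:DiffOpBdy}, using throughout the hypothesis $\delta(x_0)=\dist(x_0,\del\calO)>1$ on $\Omega$, so that $\delta(x_0)^{-N}\le 1$ for every $N\ge 0$. When $d>2$ the last estimate of Corollary \ref{Cor:DiffOpBdy} applies verbatim. When $d=2$ and $|\lambda|\ge 1$ one has $|\lambda|\delta(x_0)>1$, so the middle estimate of Corollary \ref{Cor:DiffOpBdy} gives a bound of the form $C|\lambda|^{m}e^{-\Im\lambda\,\delta(x_0)}$; since $\Im\lambda\ge\sin(\epsilon)|\lambda|$ on $\mathfrak{D}_\epsilon$ and $\delta(x_0)>1$, the prefactor $|\lambda|^{m}$ is absorbed by shrinking the exponent to $C_1\Im\lambda\,\delta(x_0)$, which (with $\delta(x_0)^{d-2}=1$) is the claimed bound. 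When $d=2$ and $|\lambda|<1$ one splits further: if $|\lambda|\delta(x_0)>1$ the middle estimate bounds everything by a constant, while if $|\lambda|\delta(x_0)\le 1$ the first estimate of Corollary \ref{Cor:DiffOpBdy} gives $C\bigl(1+|\log(\delta(x_0)\lambda)|+\delta(x_0)^{-m}\bigr)\le C\bigl(1+|\log(\delta(x_0)\lambda)|\bigr)$; in both sub-cases one obtains the claimed bound $C_2(1+|\log(\delta(x_0)\lambda)|)$.

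I do not expect a genuine obstacle: the statement is essentially a repackaging of Corollary \ref{Cor:DiffOpBdy} at the level of Sobolev norms, and the only two points that require a small amount of care are the reduction from real $s$ to integer $s$ (handled by Sobolev embedding on the compact manifold $\del\calO$) and the absorption of the polynomial-in-$|\lambda|$ prefactors into the exponential decay, which relies on the sector inequality $\Im\lambda\ge\sin(\epsilon)|\lambda|$ valid on $\mathfrak{D}_\epsilon$.
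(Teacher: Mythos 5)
Your proof is correct and follows essentially the same route the paper takes (the paper simply remarks that Sobolev norms over the compact boundary $\partial\calO$ are controlled by $L^\infty$-norms of derivatives and invokes Corollary \ref{Cor:DiffOpBdy}); your version just spells out the reduction to integer $s$ via embedding, the chart/partition-of-unity bound, and the case analysis in $\lambda$ and $\delta(x_0)$ explicitly.
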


\section{Hahn holomorphic and Hahn meromorphic functions} \label{hahnapp}
The theory of Hahn analytic functions was developed in \cite{muller2014theory} in a very general setting. This appendix is taken directly from \cite{OS}. 
For the purposes of this paper we restrict our considerations to so-called $z$-$\log(z)$-Hahn
holomorphic functions and refer to these as Hahn holomorphic.
To be more precise, suppose that $\Gamma \subset \mathbb{R}^2$ is a subgroup of $\mathbb{R}^2$. We endow
$\Gamma$ and $\mathbb{R}^2$ with the lexicographical order. Recall that a subset $A \subset \Gamma$
is called well-ordered if any subset of $A$ has a smallest element.
A formal series
$$
  \sum_{(\alpha,\beta) \in \Gamma} a_{\alpha,\beta} \: z^\alpha (-\log{z})^{-\beta}
$$
will be called a Hahn-series if the set of all $(\alpha,\beta) \in \Gamma$ with $a_{\alpha,\beta} \not=0$
is a well ordered subset of $\Gamma$.

In the following let $Z$ be the logarithmic covering surface of the complex plane without the origin.
We will use polar coordinates $(r,\varphi)$ as global coordinates to identify $Z$ as a set with $\Reell_+ \times \Reell$.
Adding a single point $\{0\}$ to $Z$ we obtain a set $Z_0$ and a projection map $\pi: Z_0 \to \Complex$ by extending the covering map $Z \to \Complex\backslash \{0\}$ by sending $0 \in Z_0$ to $0 \in \Complex$.
We endow $Z$ with the covering topology and $Z_0$ with the topology generated by the open sets in $Z$
together with the open discs $D_\epsilon:=\{0\} \cup \{(r,\varphi)\mid 0\le r<\epsilon \}$.
This means a sequence $((r_n,\varphi_n))_n$ converges to zero if and only if $r_n \to 0$. The covering map is continuous
with respect to this topology.
For a point $z \in Z_0$
we denote by $| z |$ its $r$-coordinate and by $\arg(z)$ its $\varphi$ coordinate. We will think of the positive
real axis embedded in $Z$ as the subset $\{ z \mid \arg(z)=0\}$. 
Define the following sectors
$D_\delta^{[\sigma]}=\{z \in Z_0 \mid 0\le  |z| < \delta,\; |\varphi| < \sigma\}$.

In the following fix $\sigma>0$ and a complex Banach space $V$. We say a function $f: D_\delta^{[\sigma]} \to V$ 
is Hahn holomorphic near $0$ in $D_\delta^{[\sigma]}$
if there exists a Hahn series with coefficients in $V$ that converges normally to $f$, i.e. such that
$$
 f(z) = \sum_{(\alpha,\beta) \in \Gamma} a_{\alpha,\beta} \: z^\alpha (-\log{z})^{-\beta}
$$
and
$$
 \sum_{(\alpha,\beta) \in \Gamma} \| a_{\alpha,\beta} \| \| z^\alpha (-\log{z})^{-\beta}\|_{L^\infty(D_\delta^{[\sigma]})} < \infty
$$
and there exists a constant $C>0$ such that $a_{\alpha,\beta}=0$ if $-\beta > C \alpha$.
This implies also that $a_{\alpha,\beta}=0$ in case $(\alpha,\beta)<(0,0)$.
As shown in \cite{muller2014theory}, in case $V$ is a Banach algebra the set of Hahn holomorphic functions with values
in $V$ is an algebra. A meromorphic function on $D_\delta^{[\sigma]} \backslash \{0\}$ is called Hahn meromorphic
with values in a Banach space $V$ if near zero it can be written as a quotient of a Hahn holomorphic function with values in $V$
and a Hahn holomorphic function with values $\mathbb{C}$. Note that the algebra of Hahn holomorphic functions with values in
$\mathbb{C}$ is an integral domain and Hahn meromorphic functions with values in $\mathbb{C}$ form a field.
There exists a well defined injective ring homomorphism from the field of Hahn meromorphic functions
into the field of Hahn series. This ring homomorphism associates to each Hahn meromorphic function its Hahn series.
The theory is in large parts very similar to the theory of meromorphic functions. In particular the following very useful
statement holds: if $V$ is a Banach space and $f: D_\delta^{[\sigma]} \to V$ is Hahn meromorphic and bounded, then
$f$ is Hahn holomorphic. The main result of \cite{muller2014theory} states that the analytic Fredholm theorem holds for this class of functions.

\end{document}